\newcommand{\al}{\alpha}
\newcommand{\be}{\beta}
\newcommand{\ga}{\gamma}
\newcommand{\ep}{\epsilon}
\newcommand{\tomega}{\tilde{\omega}}
\newcommand{\dbar}{\overline{\partial}}
\newcommand{\hvarphi}{\hat{\varphi}}
\newcommand{\homega}{\hat{\omega}}
\newcommand{\ddbar}{\sqrt{-1}\partial\dbar}
\newtheorem{theorem}{Theorem}[section]
\newtheorem{proposition}{Proposition}[section]
\newtheorem{lemma}{Lemma}[section]
\newtheorem{example}{Example}[section]
\newtheorem{definition}{Definition}[section]
\newtheorem{corollary}{Corollary}[section]
\newtheorem{remark}{Remark}[section]
\newcommand{\PP}{\mathbb{P}}
\newcommand{\CC}{\mathbb{C}}
\begin{document}

\title{ Connecting toric manifolds by conical  K\"ahler-Einstein metrics}

\author{Ved Datar$^*$, Bin Guo$^{**}$, Jian Song$^\dagger$ and Xiaowei Wang$^{\dagger\dagger}$ }

\address{$*$ Department of Mathematics, Rutgers University, Piscataway, NJ 08854}

\email{veddatar@math.rutgers.edu}

\address{$**$ Department of Mathematics, Rutgers University, Piscataway, NJ 08854}

\email{bguo@math.rutgers.edu}

\address{$\dagger$ Department of Mathematics, Rutgers University, Piscataway, NJ 08854}

\email{jiansong@math.rutgers.edu}

\address{$\dagger\dagger$ Department of Mathematics and Computer Sciences, Rutgers University, Newark, NJ  07102}

\email{xiaowwan@math.rutgers.edu}

\thanks{Research supported in
part by National Science Foundation grants DMS-0847524 and the graduate fellowship of Rutgers University.}

\begin{abstract} We give  criterions for the existence of toric conical K\"ahler-Einstein and K\"ahler-Ricci soliton metrics on any toric manifold in relation to the greatest Ricci and Bakry-Emery-Ricci lower bound. We also show that any two toric manifolds with the same dimension can be joined  by a continuous path of  toric manifolds with conical K\"ahler-Einstein metrics in the Gromov-Hausdorff topology.

\end{abstract}

\maketitle

{\footnotesize  \tableofcontents}

%
\bigskip

\bigskip
\maketitle

\section{Introduction}

The existence of K\"ahler-Einstein metrics has been a central problem in K\"ahler geometry since Yau's celebrated solution \cite{Y1} to the Calabi conjecture. Constant scalar curvature metrics with conical singularities  have been extensively studied in \cite{Mc, Tr, LT} for Riemann surfaces. In general, one considers a pair $(X, D)$ for an $n$-dimensional compact K\"ahler manifold and a smooth complex hypersurface $D$ of $X$. A conical K\"ahler metric $g$ on $X$ with cone angle $2\pi \beta$ along $D$ is locally equivalent to the following model edge metric
$$g= |z_1|^{-2(1-\beta)} d z_1 \otimes d\bar z_1 +  \sum_{j=2}^{n} dz_j \otimes d\bar z_j  $$
if $D$ is locally defined by $z_1=0$.
Applications of conical K\"ahler metrics are proposed and applied to obtain various Chern number inequalities  \cite{T94, SW}.      Donaldson has developed the linear theory to study the existence of canonical conical K\"ahler metrics  in \cite{D4}. It plays an essential role in the recent breakthrough of the Yau-Tian-Donalson conjecture \cite{T87, CDS1, T13, CDS2, CDS3, CDS4}.   Brendle \cite{Br} solves Yau's Monge-Amp\`ere equations for conical K\"ahler metrics with cone angle $2\pi \beta$ for $\beta\in (0,1/2)$ along a smooth divisor $D$. The general case is settled by Jeffres, Mazzeo and Rubinstein \cite{JMR} for all $\beta\in (0,1)$. As an immediate consequence, there always exist conical K\"ahler-Einstein metrics with negative or zero constant scalar curvature with cone angle $2\pi \beta$ along a smooth divisor $D$ for $\beta\in (0,1)$.
When $X$ is a Fano manifold, Donaldson \cite{D4} proposes to study the conical K\"{a}hler-Einstein equation
\begin{equation}\label{KE-D}
Ric(\omega) = \be \omega + (1-\be) [D],
\end{equation}
where $D$ is smooth simple divisor in the anticanonical class $[-K_X]$ and $\beta \in (0, 1)$.

The solvability of equation (\ref{KE-D})  is closely related to the following holomorphic invariant for Fano manifolds which is known as the greatest Ricci lower bound first introduced by Tian in \cite{T92}.

\begin{definition}\label{Rg}
Let $X$ be a Fano manifold.  The greatest Ricci lower bound $R(X)$ is defined by
\begin{equation}
 R(X)=\sup\{\beta\mid Ric(\omega)\geq \beta \omega, \text{ for some  }\omega\in c_1(X)\cap \mathcal{K}(X)\},%
 \end{equation}
where $\mathcal{K}(X)$ is the space of all K\"ahler metrics on $X$.
\end{definition}
It is proved by Szekelyhidi in \cite{Sze} that $[0, R(X))$ is the maximal interval for the continuity method to solve the K\"ahler-Einstein equation on a Fano manifold $X$. In particular, it is independent of the choice for the initial K\"ahler metric when applying the continuity method. The invariant $R(X)$ is explicitly calculated for $\PP^2$ blown up at one point by Szekelyhidi \cite{Sze}, and for all toric Fano manifolds by Li \cite{LiC}. Recent results \cite{Li13} show  that $R(X)=1$ if and only if $X$ is K semi-stable, and such a Fano manifold satisfies the Chern-Miyaoka inequality \cite{SW}.   It is shown in \cite{SW, LS} that (\ref{KE-D}) cannot be solved for $\beta>R(X)$, answering a question of Donaldson \cite{D4} while it can always be solved for $\beta \in (0, R(X))$ if one replace $D$ by a smooth divisor in the pluri-anticanonical system of $X$. In this paper, we will give various generalizations of the greatest Ricci lower bound.

The Bakry-Emery-Ricci curvature on a Riemannian manifold $(M, g)$ is defined  by
$$Ric_f (g) = Ric(g) + Hess f$$
for a smooth real valued function $f$ on $M$ \cite{BE}. If $(M, g, f)$ satisfies the equaiton $Ric_f(g) = \lambda g$ for some $\lambda \in \mathbb{R}$, it  is called a gradient Ricci soliton with the gradient vector field $V=\nabla f$. We can define the greatest Bakry-Emery-Ricci lower bound on Fano manifolds as an analogue of the greatest Ricci lower bound.

\begin{definition} Let $X$ be a Fano manifold. The greatest Bakry-Emery-Ricci lower bound $R_{BE}(X)$ is defined by
\begin{equation*}
 R_{BE}(X)=\sup\{\beta\mid Ric(\omega)\geq \beta \omega+\mathcal{L}_{Re \xi} \omega, \text{for some }\omega\in c_1(X)\cap \mathcal{K}(X)  \text{ and } \xi\in H^0(X, TX) \}.%
 \end{equation*}
where $\mathcal{L}_\xi$ is the Lie derivative with respect to $\xi$.

\end{definition}

Since $X$ is Fano, it is simply connected and $\mathcal{L}_{Re \xi} \omega =  -\ddbar f_\xi$ for some real-valued smooth function $f_\xi$ with $\nabla_{z_i} \nabla_{z_j} f_\xi =0$ in holomorphic coordinates. This implies that $Ric(\omega) \geq \beta \omega + \mathcal{L}_{Re \xi} \omega$ is equivalent to $$ R_{ij} + \nabla_i\nabla_j f_\xi \geq \beta g_{ij} $$ %
in real coordinates.  Hence
$$R_{BE}(X)= \sup \{ \beta~|~ Ric(\omega) + \ddbar f \geq \beta  \omega,   ~\omega\in c_1(X)\cap \mathcal{K}(X),~  f\in C^\infty(X), \uparrow \dbar f \text{ is holomorphic } \}.   $$
One can relate $R_{BE}(X)$ to the continuity method for solving the K\"ahler-Ricci soliton equation on $X$ as introduced in \cite{TZhu} as analogue of $R(X)$ and explicitly calculate the value of $R_{BE}(X)$ for toric Fano manifolds. In fact, we conjecture that $R_{BE}(X)=1$ for any Fano manifold $X$. However, in this paper, we are more interested in generalizing $R(X)$ and $R_{BE}(X)$ for log Fano manifolds and more specifically, toric conical metrics on toric manifolds.

We start with a few definitions. Let $X$ be an $n$-dimensional toric manifold and  $L$ a K\"ahler class (or equivalently, an ample $\mathbb{R}$  divisor) on $X$.  In \cite{D2, SW}, smooth toric conical K\"ahler metrics are defined and studied in detail and a brief review is given in section 2.  We let $\mathcal{K}_c(X)$ be the set of all smooth toric conical K\"ahler metrics with each cone angle in $(0, 2\pi]$.

\begin{definition}
Let $X$ be a toric manifold. Let $\omega \in \mathcal{K}_c(X)$ be a smooth toric conical K\"ahler metric on $X$.  We say
\begin{equation*}
Ric(\omega) > \al \omega
\end{equation*}
if there exists $\eta\in \mathcal{K}_c(X)$ and an effective toric divisor $D$ such that
\begin{equation*}
 Ric(\omega) = \al\omega + \eta + [D] .
\end{equation*}
\end{definition}
In fact, $\omega$ and $\eta$ have the same cone angles and the divisor $D$ can be explicitly calculated in terms of the cone angles of $\omega$.

\begin{definition}
A smooth toric conical K\"ahler metric $\omega\in \mathcal{K}_c(X)$ is called a  conical K\"ahler-Ricci soliton metric if it satsifies
\begin{equation*}
Ric(\omega) = \al\omega + \mathcal{L}_\xi \omega + [D]
\end{equation*}

\noindent for some holomorphic vector field $\xi$ and effective toric divisor $D$. If $\xi = 0$,  the metric is a   smooth toric conical K\"ahler-Einstein metric.
\end{definition}

 Associated to any toric K\"ahler class, we define the following  geometric invariants $\mathcal{R}(X,L)$, $\mathcal{R}_{BE}(X,L)$  and $\mathcal{S}(X,L)$.

 \begin{definition}  Let $X$ be a toric manifold and $L$ be a K\"ahler class on $X$. Let $\{ D_j \}_{j=1}^N$ be the set of all prime toric divisors on $X$. Then we define
 \medskip

 \begin{enumerate}

 \item $\mathcal{R}(X,L) = \sup \{\al ~ |~ Ric(\omega) > \al\omega  \text{ for some } \omega \in c_1(L)\cap\mathcal{K}_c(X)   \},$
 \medskip
\item $\mathcal{R}_{BE}(X,L) = \sup \{\al ~ |~   Ric(\omega)+ \mathcal{L}_{\xi} \omega> \al\omega \text{ for a } \omega\in c_1(L)\cap\mathcal{K}_c(X) \text{ and a toric } \xi \in H^0(X, TX)  \},$
\medskip

\item $\mathcal{S}(X,L) = \sup{\{\al ~|~ \text{there exists } D = \sum_{j=1}^{N}{a_jD_j} \sim -K_X - \al L  \hspace{0.05in}\text{with} \hspace{0.05in}a_j \in [0,1)\}}$ .

\end{enumerate}

\end{definition}

$\mathcal{R}(X,L)$ and $\mathcal{R}_{BE}(X,L)$ are  natural generalizations of $R(X)$ and $R_{BE}(X)$ for log Fano manifolds with polarization $L$. $\mathcal{S}(X, L)$ characterizes when $(X, D)$ is log Fano as by definition $K_X +D$ is klt and negative. In the special case that $X$ is toric  Fano and $L = -K_X$, $\mathcal{R}(X,-K_X)$ is the usual greatest Ricci lower bound studied in \cite{Sze} and $\mathcal{S}(X,-K_X) = 1$.  In fact, for any toric pair $(X, L)$, $\mathcal{R}(X, L)$ and $\mathcal{S}(X,L)$ are both positive.  In general, one can define $\mathcal{R}(X,L)$ and $\mathcal{R}_{BE}(X,L)$ for any log Fano pair $(X,L)$ by requiring $Ric(\omega) - \alpha \omega\geq 0$ and $Ric(\omega)+\mathcal{L}_\xi \omega - \alpha \omega\geq 0$ in the current sense.

Any toric manifold $X$ is induced by an integral Delzant polytope $P$ and $P$ determines a K\"ahler class on $X$. Without loss of generality, we let
\begin{equation}
P = \{ x\in \mathbb{R}^n~|~ l_j(x) >0, ~j=1, ..., N\},
\end{equation}
where $l_j(x)=v_j \cdot x + \lambda_j$, $v_j$ is a prime integral integral vector in $\mathbb{Z}^n$ and $\lambda_j \in \mathbb{R}$ for all $j=1, ..., N$.  As a special case, when $X$ is Fano, one can choose $\lambda_j=1$ for all $j$ and the polytope gives the anti-canonical polarization of $X$. The existence of smooth toric K\"ahler-Einstein and K\"ahler-Ricci soliton metrics on toric Fano manifolds is completely settled by Wang-Zhu \cite{WZ}. We generalize their results to toric conical K\"ahler-Einstein and K\"ahler-Ricci soliton metrics on any toric manifold.

\begin{theorem} Let $X$ be an $n$-dimensional toric K\"ahler manifold and $L$ be the  K\"ahler class on  $X$ induced by the Delzant polytope $P$.   Then

\begin{enumerate}

\item   $\mathcal{R}_{BE}(X,L) =\mathcal{S}(X,L)>0$ and
\begin{equation}\mathcal{R}_{BE}(X,L)  = \sup{\{\al ~|~ \text{there exists } \tau \in P \hspace{0.05in}\text{with}\hspace{0.05in} 1 - \alpha l_j(\tau) > 0, j=1, ..., N\}},
\end{equation}

\item For any $\al \in (0,  \mathcal{S}(X,L) )$ and $\tau \in P$ satisfying $1 - \al l_j(\tau) \geq 0$ for all $j$, there exists a unique $\omega \in L \cap \mathcal{K}_c(X)$ solving the K\"ahler-Ricci soliton equation
\begin{equation}\label{kreqn}Ric(\omega) = \alpha\omega + \mathcal{L}_{\xi} \omega + [D].
\end{equation}
Moreover the divisor $D$ and the vector field $\xi$ are given by
\begin{equation}
D = \sum_{j=1}^{N}{(1-\al l_j(\tau))D_j},~~
\xi = \sum_{i=1}^{n}{c_iz_i\frac{\partial}{\partial z_i}},
\end{equation}
\noindent where $z_i$'s are the standard coordinates on $(\mathbb{C}^{*})^n$ and $c\in \mathbb{R}^n$ is uniquely given by
\begin{equation}
\tau = \frac{\int_{P}{xe^{c\cdot x} \,dx}}{\int_{P}{e^{c\cdot x} \,dx}}.
\end{equation}
\item There does not exist a toric conical K\"ahler-Ricci soliton metric $\omega\in L\cap \mathcal{K}_c(X)$ solving the soliton equation (\ref{kreqn}) for any $\alpha> \mathcal{R}_{BE}(X,L)$.

\end{enumerate}
\label{theorem-1}
\end{theorem}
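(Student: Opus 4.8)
Part (2) is the analytic core; parts (1) and (3) follow from it together with a combinatorial description of $\mathcal{S}(X,L)$. For (2) the plan is to pass to the moment polytope and recast (\ref{kreqn}) as a real Monge--Ampère equation on $\mathbb{R}^n$, then solve it variationally following Wang--Zhu \cite{WZ}. On the open orbit $(\CC^*)^n\cong\mathbb{R}^n\times(\mathbb{R}/2\pi\mathbb{Z})^n$ with logarithmic coordinate $x$, a metric $\omega\in c_1(L)\cap\mathcal{K}_c(X)$ is $\ddbar u$ for a smooth strictly convex $u$ on $\mathbb{R}^n$ with $\nabla u$ a diffeomorphism onto $P$, and its cone angle $2\pi\beta_j$ along $D_j$ is encoded in the asymptotics of $u$ along the face $\{l_j=0\}$ of $\bar P$, equivalently in the conical boundary behaviour of the Legendre-dual symplectic potential $g=u^*$ on $P$ as in \cite{D2,SW}. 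With $\xi=\sum_i c_iz_i\partial_{z_i}$, a direct computation of $Ric(\omega)$ turns (\ref{kreqn}) into a real Monge--Ampère equation of the shape $\det(\nabla^2u)=C\,e^{-\alpha u-c\cdot\nabla u}$ for such a $u$ whose prescribed conical asymptotics correspond to the cone angles $2\pi\beta_j:=2\pi\,\alpha l_j(\tau)$; here $\beta_j\in(0,1]$ since $\tau\in P$ and $1-\alpha l_j(\tau)\ge0$, so these angles lie in $(0,2\pi]$ and the associated divisor is $D=\sum_j(1-\beta_j)D_j=\sum_j(1-\alpha l_j(\tau))D_j$, matching the statement, with $c\in\mathbb{R}^n$ fixed by the condition that the weighted barycenter $\int_P xe^{c\cdot x}\,dx/\int_P e^{c\cdot x}\,dx$ equal $\tau$ and $C>0$ fixed by the volume normalization of $c_1(L)$.

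\textbf{Part (2).} First, the vector field: $c\mapsto\log\int_P e^{c\cdot x}\,dx$ is smooth and strictly convex on $\mathbb{R}^n$ with gradient the weighted barycenter $c\mapsto\int_P xe^{c\cdot x}\,dx/\int_P e^{c\cdot x}\,dx$, which is a diffeomorphism of $\mathbb{R}^n$ onto $P$; hence there is a unique $c$ whose weighted barycenter is $\tau$, and this fixes $\xi$. Then one solves the Monge--Ampère equation by minimizing the associated (strictly convex) functional $\mathcal{F}_{\alpha,c}$ on the space of symplectic potentials on $P$ with the prescribed conical boundary behaviour. The crucial point is coercivity of $\mathcal{F}_{\alpha,c}$ modulo affine functions, and this is exactly where $\alpha<\mathcal{S}(X,L)$ enters: by the combinatorial description of $\mathcal{S}$ (next paragraph) there is $\tau'\in P$ with $1-\alpha l_j(\tau')>0$ for all $j$, and testing $\mathcal{F}_{\alpha,c}$ against the affine potentials built from $\tau'$ gives, as in Wang--Zhu's argument at the barycenter, a linear coercivity bound; the conical boundary terms $\sum_j(1-\beta_j)\int_{\partial P}(\cdots)$ are lower order and finite precisely because each $1-\beta_j<1$. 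A minimizer then exists, and the interior Pogorelov-type estimates together with the boundary analysis near $\partial P$ (governed by the exponents $\beta_j$) promote it to a smooth toric conical solution of (\ref{kreqn}), with $D$ read off by matching divisorial parts of $Ric(\omega)$; uniqueness within $L\cap\mathcal{K}_c(X)$ follows from strict convexity of $\mathcal{F}_{\alpha,c}$ along linear paths of symplectic potentials together with the fact that $\xi$ is determined by $\tau$.

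\textbf{Part (1).} For toric $X$, $-K_X=\sum_jD_j$, $L\sim_{\mathbb{R}}\sum_j\lambda_jD_j$, and the torus-invariant linear equivalences are generated by $\mathrm{div}(\chi^m)=\sum_j\langle v_j,m\rangle D_j$; hence $\sum_ja_jD_j\sim_{\mathbb{R}}-K_X-\alpha L$ iff $1-a_j=\alpha\lambda_j+\langle v_j,m\rangle$ for some $m$, i.e.\ $a_j=1-\alpha l_j(\tau)$ with $\tau=m/\alpha$, and $a_j\in[0,1)$ translates to $\tau\in P$ and $\alpha l_j(\tau)\le1$. Thus $\mathcal{S}(X,L)=\sup\{\alpha:\exists\tau\in P,\ 1-\alpha l_j(\tau)\ge0\ \forall j\}$, and since $l_j>0$ on $P$ the non-strict inequality may be replaced by a strict one in the supremum, giving the displayed formula; positivity is clear since for small $\alpha$ every interior point of $P$ works. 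Now $\mathcal{R}_{BE}(X,L)\ge\mathcal{S}(X,L)$: for $\alpha<\mathcal{S}(X,L)$, part (2) gives $\omega\in L\cap\mathcal{K}_c(X)$ with $Ric(\omega)+\mathcal{L}_{-\xi}\omega=(\alpha-\varepsilon)\omega+(\varepsilon\omega+[D])$, where $\varepsilon\omega\in\mathcal{K}_c(X)$ and $[D]$ is effective toric, so $\mathcal{R}_{BE}(X,L)\ge\alpha-\varepsilon$ for all $\varepsilon>0$. Conversely, if $Ric(\omega)+\mathcal{L}_\xi\omega=\alpha\omega+\eta+[D']$ with $\omega,\eta\in\mathcal{K}_c(X)$ and $D'$ effective toric, comparing divisorial parts (the potentials of $\omega,\eta$ and $\mathcal{L}_\xi\omega$ are locally bounded, hence carry no divisorial part, while $Ric(\omega)$ has divisorial part $\sum_j(1-\beta_j^\omega)[D_j]$) forces $D'=\sum_j(1-\beta_j^\omega)D_j$ with coefficients in $[0,1)$; translating the resulting equality of conical Kähler metrics into the polytope picture then yields a feasibility constraint equivalent to the existence of $\tau\in P$ with $1-\alpha l_j(\tau)\ge0$, so $\alpha\le\mathcal{S}(X,L)$ and $\mathcal{R}_{BE}(X,L)\le\mathcal{S}(X,L)$. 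This last step is an obstruction argument in the spirit of the non-existence results of \cite{SW,LS}.

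\textbf{Part (3) and the main difficulty.} If a toric conical Kähler--Ricci soliton $\omega\in L\cap\mathcal{K}_c(X)$ solving (\ref{kreqn}) existed for some $\alpha$, then for every $\varepsilon>0$ we would have $Ric(\omega)+\mathcal{L}_{-\xi}\omega=(\alpha-\varepsilon)\omega+(\varepsilon\omega+[D])$ with $-\xi\in H^0(X,TX)$ toric, $\varepsilon\omega\in\mathcal{K}_c(X)$ and $[D]$ effective toric, i.e.\ $Ric(\omega)+\mathcal{L}_{-\xi}\omega>(\alpha-\varepsilon)\omega$; hence $\mathcal{R}_{BE}(X,L)\ge\alpha-\varepsilon$ for all $\varepsilon$, so $\mathcal{R}_{BE}(X,L)\ge\alpha$, which is the contrapositive of (3). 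The real work is entirely in part (2): proving that coercivity of $\mathcal{F}_{\alpha,c}$ holds \emph{precisely} when $\alpha<\mathcal{S}(X,L)$ (the boundary-term bookkeeping must distinguish the faces with $\alpha l_j(\tau)=1$ from those with $\alpha l_j(\tau)<1$), and carrying out the a priori estimates for the real Monge--Ampère equation up to $\partial P$ with the prescribed conical asymptotics — that is, extending Wang--Zhu's arguments from the smooth Fano case to the conical/logarithmic setting.
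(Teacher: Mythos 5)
Your reductions of parts (1) and (3) to part (2) plus the toric linear-equivalence computation are essentially right, and your argument for part (3) (perturbing the soliton identity to $Ric(\omega)+\mathcal{L}_{-\xi}\omega=(\alpha-\varepsilon)\omega+\varepsilon\omega+[D]$, so that $\mathcal{R}_{BE}\geq\alpha-\varepsilon$ for all $\varepsilon$) is clean and matches the logic the paper relies on. For part (2), however, you take a genuinely different route from the paper: you propose a direct variational minimization of a functional on symplectic potentials (the Berman--Berndtsson approach to real Monge--Amp\`ere on the polytope), whereas the paper runs a continuity method in the Einstein constant $s\in[0,\alpha]$ for the complex Monge--Amp\`ere equation on $X$, starting from a conical Calabi--Yau-type solution at $s=0$. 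The paper's choice is deliberate: it needs a \emph{family} version of the $C^0$ estimate, uniform under perturbations of the polytope, which is then reused for the degenerating families in Theorem \ref{theorem-3}; your variational argument would not directly produce that uniformity.

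The genuine gaps are in the steps you leave as black boxes. First, in part (2) the passage from a minimizer of $\mathcal{F}_{\alpha,c}$ to a \emph{smooth toric conical} metric --- i.e.\ establishing the conical Guillemin asymptotics $u=\sum_j\beta_j^{-1}l_j\log l_j+f$ with $f\in C^\infty(\overline P)$ --- is precisely the hard analytic content, and "interior Pogorelov estimates together with boundary analysis near $\partial P$" does not constitute a proof: boundary regularity for real Monge--Amp\`ere with prescribed conical exponents on a Delzant polytope is not standard. The paper sidesteps this entirely by doing all higher-order estimates on the local $\beta$-branched covers (where the metric is genuinely smooth and Yau's and Calabi's computations apply verbatim), so that the only new estimate is the uniform $C^0$ bound of Proposition \ref{c^0 estimate}; its key ingredient, which your sketch has no substitute for, is the uniform gradient bound $|\nabla\log l_j(\nabla\hat\varphi)|\leq\Lambda$ of Lemma \ref{ggrad}, which replaces the Harnack inequality of Wang--Zhu and makes the barycenter identity argument (Step 2 of that proof) go through. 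Second, in part (1) the inequality $\mathcal{R}_{BE}(X,L)\leq\mathcal{S}(X,L)$ is asserted via an unspecified "feasibility constraint." The actual derivation requires real work: one must normalize $\nabla\varphi(\mathbb{R}^n)=P$ and $\nabla\psi(\mathbb{R}^n)=P^{\eta}$, prove that the residual affine term in the real Monge--Amp\`ere equation vanishes (a boundedness argument using the conical Guillemin expansion of $\alpha\varphi+\psi$), and then integrate $\nabla(e^{-\alpha\varphi-\psi})$ over $\mathbb{R}^n$ to produce $\bar\tau\in P$ with $1-\alpha l_j(\bar\tau)\geq 1-\beta_j\geq 0$. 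Without these two computations the proposal does not close.
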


The existence of  toric conical K\"ahler-Ricci soliton metrics on log Fano toric varieties is derived in \cite{BB} and for general toric manifolds by allowing the cone angle in $(0, \infty)$ \cite{Leg}. Our result gives a complete classification for  the existence of toric conical K\"ahler-Einstein and K\"ahler-Ricci soliton metrics using the invariants $\mathcal{R}(X,L)$ and $\mathcal{R}_{BE}(X,L)$ for any K\"ahler class. We are only interested in the toric conical K\"ahler metrics with cone angle in $(0, 2\pi)$ since the smooth part is geodesic convex and various Riemannian geometric properties can be applied. In particular, it gives  optimal regularity and a complete classification of smooth toric conical K\"ahler-Ricci soliton metrics for any toric pair $(X, L)$. The proof is based on a family version of the $C^0$ estimate and we will apply it to toric degenerations for smooth toric conical K\"ahler-Einstein metrics.

As a special case, we obtain an existence result for conical K\"ahler-Einstein metrics on toric manifolds and apply it to characterize the invariant $\mathcal{R}(X, L)$ in terms of the polytope data.

\begin{theorem} Let $X$ be an $n$-dimensional toric K\"ahler manifold and $L$ be the  K\"ahler class on  $X$ induced by the Delzant polytope $P$.      Let $P_C$ be the barycenter of $P$. Then
\begin{enumerate}

\item  $\mathcal{R}(X,L)>0$  and
\begin{equation}
\mathcal{R}(X,L) = \sup{\{ \al ~|~ 1-\al l_j(P_C) > 0, ~j=1, ..., N\}}.
\end{equation}

\item For all $\al \in (0, \mathcal{R}(X, L)] $, there exists a unique toric conical K\"ahler-Einstein metric $\omega\in L \cap \mathcal{K}_c(X)$ solving
\begin{equation}\label{keeqn}
Ric(\omega)=\alpha\omega + [D].
\end{equation} Moreover the divisor $D$ is  given by
\begin{equation}
D = \sum_{j=1}^{N}{(1-\al l_j(P_c))D_j} .
\end{equation}

\item  There does not exist a toric conical K\"ahler-Einstein metric $\omega\in L\cap \mathcal{K}_c(X)$ solving the equation (\ref{keeqn}) for any $\alpha> \mathcal{R}(X,L)$.

\end{enumerate}
\label{theorem-2}
\end{theorem}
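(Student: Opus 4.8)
The plan is to deduce Theorem \ref{theorem-2} from Theorem \ref{theorem-1} by specializing to the Einstein case $\xi = 0$, since an Einstein metric is precisely a Ricci soliton with vanishing vector field. First I would observe that, by the formula in part (1) of Theorem \ref{theorem-1}, we have $\mathcal{R}_{BE}(X,L) = \mathcal{S}(X,L)$ with an explicit polytope characterization, so the content of part (1) of Theorem \ref{theorem-2} is to show that requiring $\xi = 0$ forces the point $\tau$ appearing there to be the barycenter $P_C$. Indeed, the relation $\tau = \big(\int_P x e^{c\cdot x}\,dx\big)/\big(\int_P e^{c\cdot x}\,dx\big)$ from Theorem \ref{theorem-1} reduces, when $c = 0$ (equivalently $\xi = 0$), to $\tau = \big(\int_P x\,dx\big)/\big(\int_P 1\,dx\big) = P_C$. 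So the admissible $\al$'s for the Einstein equation are exactly those for which $1 - \al l_j(P_C) \geq 0$ for all $j$, and the supremum over the open condition $1 - \al l_j(P_C) > 0$ gives the stated formula; positivity of $\mathcal{R}(X,L)$ follows because $P_C \in P$ means $l_j(P_C) > 0$ for all $j$, so small positive $\al$ always works.

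Next, for part (2), I would invoke part (2) of Theorem \ref{theorem-1} with the specific choice $\tau = P_C$: for any $\al \in (0, \mathcal{R}(X,L)]$ we have $1 - \al l_j(P_C) \geq 0$ for all $j$ (with the boundary case $\al = \mathcal{R}(X,L)$ allowed because the existence statement in Theorem \ref{theorem-1}(2) permits $\ge 0$, not just $> 0$), so there is a unique $\omega \in L \cap \mathcal{K}_c(X)$ solving $Ric(\omega) = \al\omega + \mathcal{L}_\xi\omega + [D]$ with $D = \sum_j (1 - \al l_j(P_C)) D_j$ and $\xi = \sum_i c_i z_i \partial_{z_i}$ where $c$ solves $P_C = \big(\int_P x e^{c\cdot x}\,dx\big)/\big(\int_P e^{c\cdot x}\,dx\big)$. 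The point to nail down is that this equation for $c$ has $c = 0$ as its unique solution: existence is clear since $P_C$ is by definition the barycenter, and uniqueness follows from strict convexity of the logarithm of the Laplace transform $c \mapsto \log \int_P e^{c\cdot x}\,dx$ (whose gradient is the map $c \mapsto$ the $c$-weighted barycenter), so $\xi = 0$ and $\omega$ is genuinely Kähler-Einstein. Uniqueness of $\omega$ among solutions of \eqref{keeqn} is then inherited from the uniqueness clause of Theorem \ref{theorem-1}(2).

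Finally, part (3) is immediate: a toric conical Kähler-Einstein metric solving \eqref{keeqn} is in particular a toric conical Kähler-Ricci soliton solving \eqref{kreqn} with $\xi = 0$, so by part (3) of Theorem \ref{theorem-1} it cannot exist for $\al > \mathcal{R}_{BE}(X,L) = \mathcal{S}(X,L)$; it remains to check the sharper bound $\al > \mathcal{R}(X,L)$. This I would argue directly from the cohomological constraint: integrating \eqref{keeqn} against $c_1(L)^{n-1}$ and using that $[D] = [-K_X] - \al L$ in cohomology forces $D = \sum_j a_j D_j$ with $a_j = 1 - \al l_j(\tau)$ for some $\tau$, and since $D$ must be effective with klt coefficients we need $a_j \in [0,1)$, i.e. $0 < l_j(\tau) \le 1/\al$; but the soliton structure with $\xi = 0$ pins $\tau = P_C$ exactly as above, giving $1 - \al l_j(P_C) \ge 0$, which contradicts $\al > \mathcal{R}(X,L)$.

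The main obstacle I expect is the step identifying $\tau$ with the barycenter $P_C$ precisely when $\xi = 0$ — that is, showing the weighted-barycenter map $c \mapsto \big(\int_P x e^{c\cdot x}\,dx\big)/\big(\int_P e^{c\cdot x}\,dx\big)$ is injective so that $c = 0$ is the only vector field producing barycentric $\tau$. This is a standard convexity fact (the log-Laplace transform of a measure with non-degenerate support is strictly convex, hence its gradient is injective), but it is the one genuinely non-formal ingredient in reducing Theorem \ref{theorem-2} to Theorem \ref{theorem-1}; everything else is bookkeeping with the polytope data and quoting the soliton theorem.
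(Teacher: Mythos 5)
Your strategy coincides with the paper's: Theorem \ref{theorem-2} is deduced from Theorem \ref{theorem-1} by taking $\tau = P_C$, noting that the unique $c$ whose weighted barycenter equals $P_C$ is $c=0$ (strict convexity of $c\mapsto\log\int_P e^{c\cdot x}\,dx$, which is exactly the uniqueness statement in the paper's compatibility lemma), so that $\xi=0$ and the soliton equation becomes the Einstein equation; parts (1) and (3) then follow from the integration-by-parts identity of Lemma \ref{sani} specialized to $c=0$, which pins $\tau$ to the barycenter and forces $1-\alpha l_j(P_C)\ge 0$. All of that is correct and is what the paper does.

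The gap is in part (2) at the endpoint $\alpha=\mathcal{R}(X,L)$. You justify this case by saying that Theorem \ref{theorem-1}(2) ``permits $\ge 0$, not just $>0$'' --- but that clause concerns the condition on $\tau$ (some cone angle may equal $2\pi$), not the condition on $\alpha$: Theorem \ref{theorem-1}(2) requires $\alpha\in(0,\mathcal{S}(X,L))$ \emph{strictly}. Since $\mathcal{R}(X,L)\le\mathcal{S}(X,L)$ with equality precisely when the barycenter minimizes $\max_j l_j$ over $P$ --- which happens in the most basic examples, e.g.\ $\mathbb{P}^n$ with $L=-K_X$, where both invariants equal $1$ --- the endpoint $\alpha=\mathcal{R}(X,L)$ is in general \emph{not} covered by the literal statement of Theorem \ref{theorem-1}(2). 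This is exactly what the paper flags as ``the only slightly subtle point'': one must re-enter the proof and observe that the uniform $C^0$ estimate of Proposition \ref{c^0 estimate} still holds for this $\alpha$ because its Step 2 only requires $l_j(\tau)>\delta>0$ for all $j$, i.e.\ $\tau$ strictly interior to $P$, and the barycenter is always interior (contrast $\alpha=\mathcal{S}(X,L)$ for general $\tau$, where the extremal point may lie on $\partial P$ and closedness fails, as in Example \ref{ex5}). Your argument needs this additional step; as written, the existence claim at $\alpha=\mathcal{R}(X,L)$ does not follow from the statement of Theorem \ref{theorem-1} alone whenever $\mathcal{R}(X,L)=\mathcal{S}(X,L)$.
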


In the special case when $X$ is Fano and  $L= -K_X$, $l_j(0) = 1$ and so $1-\alpha l_j(P_C) = (1-\alpha)l_j(\frac{-\alpha P_C}{1-\alpha})$ . By the theorem, $\mathcal{R}(X,L)$ is the maximum of all $\alpha$ such that  $\frac{-\alpha P_C}{1-\alpha}$ remains inside the polytope, generalizing  the results in the smooth case in \cite{Sze} and \cite{LiC}. There is a subtle difference between Theorem \ref{theorem-1} and Theorem \ref{theorem-2} that in Theorem \ref{theorem-2}, $\alpha$ can be taken to be $\mathcal{R}(X,L)$ while in Theorem \ref{theorem-1}, $\alpha$ has to be strictly less than $\mathcal{R}_{BE}(X,L)$. Such a phenomena will be explained in  Example \ref{ex5}.

Smooth K\"ahler-Einstein and K\"ahler-Ricci soliton metrics on Fano manifolds are unique, while the space of conical K\"ahler-Einstein and K\"ahler-Ricci soliton metrics is much bigger. One would ask under what assumptions the space of conical K\"ahler-Einstein metrics is connected. In other words, given two log Fano manifolds $X_0$ and $X_1$, we ask when and how one can connect $X_0$ and $X_1$ by a family of conical K\"ahler-Einstein spaces in Gromov-Hausdorff topology. Now we state our main theorem of the paper to answer such a question in the toric case.

\begin{theorem}Let $X_0$ and $X_1$ be two $n$-dimensional toric manifolds. Suppose $\omega_0\in \mathcal{K}_c(X_0)$ and $\omega_1 \in \mathcal{K}_c(X_1)$ are two smooth toric conical K\"ahler-Einstein metrics on $X_0$ and $X_1$ respectively. Then, there exist a family $\{ ( X_t, \omega_t )\}_{t\in[0,1]}$ of $n$-dimensional toric manifolds $X_t$ with smooth toric conical K\"ahler-Einstein metrics $\omega_t \in \mathcal{K}_c(X_t)$ for  $t\in[0,1]$, such that

\begin{enumerate}

\item $(X_t,\omega_t)$ is a continuous path in  Gromov-Hausdorff topology for $t\in [0,1]$,

\medskip

\item $\omega_t$ is piecewise smooth in $t$ on the complex torus $(\mathbb{C}^*)^n$.

\end{enumerate}

\label{theorem-3}

\end{theorem}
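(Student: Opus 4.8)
The plan is to realize the connecting family as a single toric degeneration of $X_0$ into $X_1$, reading everything off the polytope side where the existence results of Theorem \ref{theorem-1} and Theorem \ref{theorem-2} become explicit combinatorial statements. Concretely, to each $X_i$ ($i=0,1$) associate its Delzant polytope $P_i$ together with the cone–angle data of $\omega_i$; since $\omega_i$ is a \emph{toric conical K\"ahler--Einstein} metric, Theorem \ref{theorem-2} tells us that $\omega_i$ is the unique solution of $Ric(\omega_i)=\al_i\omega_i+[D_i]$ for some $\al_i>0$ and $D_i=\sum_j(1-\al_i l^{(i)}_j(P_{C,i}))D^{(i)}_j$, i.e. the barycenter of $P_i$ sits in the dilated polytope $\al_i^{-1}P_i$. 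Rescaling each $L_i$ by $\al_i$ we may normalize so that both metrics solve $Ric(\omega)=\omega+[D]$ with the \emph{same} Einstein constant $1$; then the datum on each side is simply a Delzant polytope $P_i$ whose barycenter lies in its interior (which is automatic) together with the reading $a^{(i)}_j=1-l^{(i)}_j(P_{C,i})\in[0,1)$ off the facets. So the problem becomes: connect any two such polytope-with-cone-angle data by a continuous path of the same kind.

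The key step is a \textbf{polytope interpolation}. I would first reduce to the case where $X_0$ and $X_1$ have the same normal fan (hence are isomorphic as toric varieties) by a chain of degenerations: any $n$-dimensional Delzant polytope can be connected to the standard simplex $\Delta^n$ (giving $\PP^n$) by continuously sliding its facet-defining hyperplanes $l_j(x)=v_j\cdot x+\lambda_j>0$ — moving the $\lambda_j$ and, at finitely many ``wall-crossing'' parameters, letting facets collapse or new facets appear, each event being a toric blow-down/blow-up realized as a Gromov--Hausdorff limit of the polytope (equivalently of $(\mathbb{C}^*)^n$-invariant metrics). Along each such arc, the barycenter $P_C(t)$ varies continuously and stays interior, so the numbers $1-l_j(t)(P_C(t))$ vary continuously; whenever one of them would hit $0$ we are by construction at a facet-collapse parameter, where that divisor $D_j$ disappears from $X_t$ in the limit, so $\al=1$ stays admissible (indeed strictly inside $(0,\mathcal{R}(X_t,L_t)]$, or exactly at the endpoint value which Theorem \ref{theorem-2}(2) allows). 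Applying Theorem \ref{theorem-2}(2) at each $t$ produces the unique $\omega_t\in L_t\cap\mathcal{K}_c(X_t)$ solving (\ref{keeqn}) with $\al=1$; this gives item (2), piecewise smoothness of $\omega_t$ on $(\mathbb{C}^*)^n$, since on the open torus the Monge--Amp\`ere equation reduces to a real Monge--Amp\`ere equation on $P_t$ depending piecewise-smoothly (indeed smoothly away from the finitely many wall-crossings) on $t$.

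For item (1), Gromov--Hausdorff continuity, I would invoke the \emph{family $C^0$ estimate} referenced after Theorem \ref{theorem-1}: along a smooth arc (no wall-crossing) the potentials $\varphi_t$ are uniformly bounded and depend continuously on $t$ in $C^0$, which upgrades to $L^\infty$-continuity of the K\"ahler metrics and hence $C^0$-continuity of the distance functions $d_{\omega_t}$ on the common underlying space, giving GH-continuity there. At a wall-crossing parameter $t_*$ the underlying manifold changes, but the polytopes $P_t$ converge in Hausdorff distance to $P_{t_*}$, and the normalization of total volume plus the uniform bound on the potentials forces $(X_t,\omega_t)\to(X_{t_*},\omega_{t_*})$ in GH topology — this is exactly the scenario the family $C^0$ estimate is designed to handle, the limit metric space being the conical KE space attached to the limit polytope. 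Concatenating the arcs $X_0\rightsquigarrow \PP^n\rightsquigarrow X_1$ (rescaling back the Einstein constant to return to a genuinely KE, not just normalized, family if desired — though any positive Einstein constant is fine) yields the required continuous path.

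The main obstacle is the wall-crossing analysis: showing that when a facet of $P_t$ collapses the associated toric varieties $X_t$ genuinely converge in Gromov--Hausdorff sense to $X_{t_*}$ \emph{with} their conical KE metrics, rather than developing extra singularities or losing compactness. This is where the family $C^0$/Monge--Amp\`ere estimate does the real work: one must show the estimate is uniform across the wall, which requires controlling the Ricci potential and the reference metrics uniformly as the polytope degenerates — a bound that is available precisely because the cone angles stay in $(0,2\pi)$ and the log Fano condition $\al\in(0,\mathcal{S}(X_t,L_t)]$ is preserved by the interpolation.
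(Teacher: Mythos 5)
Your overall strategy -- reduce to a chain of elementary toric modifications read off the polytope, solve the conical KE equation fiberwise via the toric existence theorem, and use a family $C^0$ estimate to pass Gromov--Hausdorff continuity across the degenerations -- is the same as the paper's, which invokes the toric weak factorization theorem (Theorem \ref{factorization theorem}) to reduce to a single blow-down $\pi:X\to Y$ and degenerates the class $L_t=\pi^*\mathcal L+tA$. But two steps of your argument have genuine gaps. First, your normalization $\al\equiv 1$ does not survive the interpolation. The existence condition at each $t$ is $1-\al\, l_j^t(P_C^t)\geq 0$ for \emph{every} facet, and your claim that ``whenever one of them would hit $0$ we are by construction at a facet-collapse parameter'' is false: at a collapse the defining function of the exceptional facet becomes a sum $l_N^0=\sum_{j=1}^{n-k}l_j^0$ (Lemma \ref{lemma for beta}), so $l_N^t(P_C^t)$ typically \emph{increases} toward the wall and $1-\al\, l_N^0(P_C^0)$ can be negative -- the paper's own example after Lemma \ref{lemma for beta} has $\be_4^0=2$. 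The paper avoids this by letting the Einstein constant vary and taking $\al_t<\min(\mathcal R(X_t,L_t),\mathcal R(Y,\mathcal L))$, using that $\inf_t\mathcal R(X_t,L_t)>0$ when the polytopes stay bounded; some such device (varying $\al_t$, or rescaling each $P_t$ separately) is needed and changes the bookkeeping of your path.

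Second, and more seriously, Gromov--Hausdorff continuity at the wall does not follow from the family $C^0$ estimate plus Hausdorff convergence of polytopes. A uniform $L^\infty$ bound on potentials does not control distance functions, and the exceptional locus could a priori carry large diameter or the geodesics between points of $Y\setminus E$ could be forced through the degenerating region. The content of Proposition \ref{one blow-up} is exactly to rule this out, and it requires three further inputs beyond the $C^0$ bound: uniform local $C^\infty$ estimates away from the toric divisors (Tsuji's trick with a metric on the exceptional bundle, then Evans--Krylov), the resulting local smooth convergence $\omega_t\to\omega_Y$ on compacta of $X\setminus D$, and the conical comparison geometry of Section 2.3 -- geodesic convexity of the smooth locus (Lemma \ref{lemma:geod}), relative volume comparison, and Gromov's lemma (Lemma \ref{grom}) -- to show that $Y\setminus E$ is $\ep$-dense in $(X,g_t)$ and that $d_t$ and $d_Y$ agree up to $\ep$ there. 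Your proposal names none of these, and without them the claimed GH limit is unsubstantiated. (A smaller point: the assertion that any Delzant polytope connects to the simplex by facet collapses each realizing a smooth blow-up or blow-down is precisely the weak factorization theorem and should be cited rather than asserted.)
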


Theorem \ref{theorem-3} can be considered to be an analytic analogue of the weak factorization theorem for toric varieties in algebraic geometry. Combined with Theorem \ref{theorem-2}, it implies that any two toric manifolds of same dimension can be joined by a continuous path of conical K\"ahler-Einstein spaces in Gromov-Hausdorff topology.  It is a natural question to ask if for any two birationally equivalent Fano manifolds, there exists a continuous path connecting them by  Fano varieties coupled with conical K\"ahler-Einstein metrics, in Gromov-Hausdorff topology. This is related to the connectedness of moduli space of log Fano varieties coupled with conical K\"ahler-Einstein metrics.

\section{Preliminaries}

\subsection{Basics of toric varieties }
\def\ZZ{\mathbb{Z}}
\def\RR{\mathbb{R}}
\def\CC{\mathbb{C}}
\def\PP{\mathbb{P}}
\def\QQ{\mathbb{Q}}
\def\ep{\epsilon}
\def\sO{\mathcal{O}}

\def\Span{\mathrm{span}}
\def\Rm{\mathrm{Rm}}
\def\Hom{\mathrm{Hom}}

In this section, let us recollect some well-known facts of projective toric varieties.

\begin{definition}\label{Dez}
A convex polytope $P\subset\RR^n$  is called a {\em Delzant polytope} if a neighborhood of any vertex  $p\in P$ is  $SL(n,\ZZ)$ equivalent to $\{x_j\geq 0, j=1, ..., n\} \subset \RR^n $. $P$ is called {\em an integral  Delzant} polytope if each vertex $p\in P$ is a lattice point in $\ZZ^n\subset \RR^n$.
\end{definition}

Let $P$ be an integral Delzant polytope in $\mathbb{R}^n$ defined by
\begin{equation}
P=\{ x\in \mathbb{R}^n~|~ l_j (x) >0, j=1, ..., N\},
\end{equation}
where $$l_j(x) = v_j \cdot x +  \lambda_j$$
and  $v_i $ is a primitive integral vector in $\mathbb{Z}^n$ and $\lambda_j \in \ZZ_+$ for all $j=1, ..., N$.  Let  $\Sigma
_{P}$ be the fan consisting of the cones over the faces of the {\em polar polytope}
$$
\check P=\{y\in \RR^n\mid \langle y,x\rangle_\RR\geq -1\text{ for all } x\in P\} .
$$
Then $\Sigma_P$ defines an $n$-dimensional smooth projective toric variety $X_P$. Its Picard group $\mathrm{Pic}(X)$ is generated by $D_i$'s, the toric divisors corresponding to the generators of edges $e_i$'s of the fan $\Sigma_P$.  For any toric divisor $D=\sum a_{i}D_{i}$, it determines a rational
convex polyhedron
\begin{eqnarray*}
P_{D}
&=&\{ \al\in \RR^n \mid \langle \al,e_{i}\rangle \geq-a_{i}\text{ for all }i\}\subset \RR^n\ ,
\end{eqnarray*}
and the space of global sections of the line bundle $\sO_X(D)$ is given by
\begin{equation}
H^{0}( X,\sO_X( D) ) =\bigoplus\limits_{\al\in
P_{D}\cap \ZZ^n}\mathbb{C\cdot \chi }^{\al},
\end{equation}%
where $\chi$'s are the characters $\Hom(T,\CC^*)$. In particular, we have
\begin{equation}\label{RR}
\dim H^{0}( X,\mathcal{O}( kD) ) =k^n\mathrm{Vol}(P_D)+O(k^{n-1}),
\end{equation}
where $\mathrm{Vol}(P_D)$ denote the Euclidean volume of $P_D\in \RR^n$.
In particular,  for the anti-canonical divisor  $-K_{X_P}=\sum_{i}D_{i}$, we have
$
P_{-K_X}=\{ \al\in \RR^n \mid \langle \al,e_i\rangle \geq -1\text{ for all }i\},
$
hence $\mathrm{Vol}(P_{-K_X})>0$. By \eqref{RR}, we conclude with the following well-known lemma.
\begin{lemma} \label{big}
Let $X$ be a smooth projective toric variety, then $-K_X$ is  big.
\end{lemma}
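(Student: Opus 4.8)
The plan is to read off the bigness of $-K_X$ directly from the dimension formula \eqref{RR} applied to the anticanonical divisor. Recall that a line bundle $\sO_X(D)$ on an $n$-dimensional projective variety is big precisely when its Iitaka dimension equals $n$, equivalently when $\dim H^0(X,\sO_X(kD))$ grows like a positive constant times $k^n$ as $k\to\infty$; this is the standard Kodaira-type characterization of bigness (see, e.g., Lazarsfeld, \emph{Positivity in Algebraic Geometry}). So it suffices to exhibit a strictly positive leading coefficient in the asymptotic expansion of $\dim H^0(X,\sO_X(-kK_X))$.

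First I would specialize the general computation recalled above to $D=-K_{X_P}=\sum_i D_i$, so that $a_i=1$ for every $i$ and the associated rational polyhedron becomes
\begin{equation*}
P_{-K_X}=\{\alpha\in\RR^n\mid \langle \alpha,e_i\rangle\geq -1\text{ for all }i\}.
\end{equation*}
Next I would observe that this polyhedron is exactly the polar polytope $\check P$ of the Delzant polytope $P$ defining $X_P$: indeed $\langle\alpha,e_i\rangle\geq -1$ for all edge generators $e_i$ of $\Sigma_P$ is the same system of inequalities that cuts out $\check P$, since the $e_i$ are the primitive generators of the rays of the fan over the faces of $\check P$. In particular $P_{-K_X}$ is a genuine $n$-dimensional convex polytope (it contains the origin in its interior, because $0$ satisfies all the strict inequalities $\langle 0,e_i\rangle=0>-1$), and hence it has strictly positive Euclidean volume, $\mathrm{Vol}(P_{-K_X})>0$.

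Finally I would plug this into \eqref{RR} with $D=-K_X$ to get
\begin{equation*}
\dim H^0(X,\sO_X(-kK_X))=k^n\,\mathrm{Vol}(P_{-K_X})+O(k^{n-1}),
\end{equation*}
whose leading coefficient $\mathrm{Vol}(P_{-K_X})$ is positive by the previous step. Therefore the Iitaka dimension of $-K_X$ is $n=\dim X$, which is the definition of bigness, and the lemma follows. The only mild subtlety — hardly an obstacle — is making precise the identification of $P_{-K_X}$ with an honest full-dimensional polytope containing $0$ in its interior; everything else is a direct quotation of \eqref{RR} together with the Kodaira characterization of big line bundles, both of which are already in hand.
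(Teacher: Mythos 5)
Your proof is correct and is essentially identical to the paper's: both read bigness off from the asymptotic \eqref{RR} applied to $-K_X=\sum_i D_i$, using that $P_{-K_X}$ contains the origin in its interior (all inequalities $\langle 0,e_i\rangle=0>-1$ are strict) and hence has positive volume. One harmless slip: $P_{-K_X}=\{\alpha\mid\langle\alpha,e_i\rangle\geq-1\}$ is the polar of $\check P$ (since the $e_i$ are the vertices of $\check P$), not $\check P$ itself, but your argument never actually uses that identification --- only the interior-point observation, which is right.
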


Next we study how far away is a toric variety $X$ from being Fano. To do that, we need to introduce some notions.
\begin{definition}\label{log-fano}
Let $X$ be a smooth projective variety and $D=\sum_i a_iD_i\geq 0$ be a $\QQ$-divisor with $D_i$'s being prime divisors, so that $a_i$'s are non-negative rational numbers.
If $(X,D)$ is a log pair, then we say that a birational map: $\pi:(Y,D')\to (X,D)$ is a log resolution if the strict transform of  $D$ union the exceptional locus is {\em log smooth}, i.e. normal crossings of smooth divisors. We say that $(X,D)$  is
{\em Kawamata log terminal (klt)} if when we write
$$ K_Y+D'= \pi^\ast(K_X+D)$$
then $\lfloor D'\rfloor \leq 0$, that is, if we write $D'=\sum a'_i D'_i$ with $D'_i$'s being prime divisors  then $a'_i<1$ for all $i$.
\end{definition}

Here we list the properties of being klt.
\begin{lemma}\label{klt} \cite[Lemma 4.2]{Mck}
 Let $X$ be  a smooth projective variety
 \begin{enumerate}
 \item If $D\geq 0$ is an effective $\QQ$-divisor  then there is a $\ep>0$ such that $(X,\ep D)$ is klt.
 \item\label{ep-D} If $(X,D)$ is klt and $D+D'\geq 0$  then $(X,D+\ep D')$ is klt for any small $\ep>0$.
 \item\label{num} If $(X,D)$ is klt and $G$ is semiample then we may find $G'\equiv G$ such that $K_X+D+G'$ is klt.
 \end{enumerate}
\end{lemma}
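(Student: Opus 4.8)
The plan is to read all three items directly off Definition~\ref{log-fano}, in each case passing to a well-chosen log resolution and then observing that the condition ``all coefficients $<1$'' survives a small perturbation of the discrepancies.

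For~(1) I would fix a log resolution $\pi\colon Y\to X$ of $(X,D)$ and write $\pi^\ast K_X=K_Y-\sum_i a_iE_i$ and $\pi^\ast D=\widetilde D+\sum_i b_iE_i$, where $\widetilde D$ is the strict transform, the $E_i$ are the $\pi$-exceptional primes, and $b_i\ge 0$ because $D\ge0$. Since $X$ is smooth, every exceptional discrepancy satisfies $a_i\ge1$; hence
$$\pi^\ast(K_X+\ep D)=K_Y+\ep\widetilde D+\sum_i(\ep b_i-a_i)E_i,$$
and for $\ep>0$ small every coefficient on the right-hand side (namely $\ep$ times a coefficient of $D$, or $\ep b_i-a_i$) is $<1$; this amounts to finitely many inequalities of the shape $\ep c<1$ and $\ep<(1+a_i)/b_i$ with $1+a_i>0$. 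As $\pi$ is a log resolution of $(X,\ep D)$ as well, the pair $(X,\ep D)$ is klt.

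For~(2) I would first note that $D+\ep D'=(1-\ep)D+\ep(D+D')\ge0$ for $\ep\in[0,1]$, so the pair is meaningful, and then take a log resolution $\pi\colon Y\to X$ of $(X,D)$ and $(X,D')$ simultaneously, with $K_Y+D_Y=\pi^\ast(K_X+D)$ and $\lfloor D_Y\rfloor\le0$. Since $\pi^\ast(K_X+D+\ep D')=K_Y+D_Y+\ep\,\pi^\ast D'$ and the coefficients of $\pi^\ast D'$ are a fixed finite list of real numbers, all coefficients of $D_Y+\ep\,\pi^\ast D'$ remain $<1$ once $\ep$ is small, so $(X,D+\ep D')$ is klt. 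For~(3) I would choose $N\gg0$ with $NG$ Cartier and $|NG|$ base point free, fix a log resolution $\pi\colon Y\to X$ of $(X,D)$ with $K_Y+D_Y=\pi^\ast(K_X+D)$, $\lfloor D_Y\rfloor\le0$, and use $\pi_\ast\sO_Y=\sO_X$ to identify $H^0(Y,\pi^\ast NG)\cong H^0(X,NG)$; then a general $H\in|NG|$ pulls back to a general member of the base point free linear system $|\pi^\ast NG|$ on the smooth variety $Y$. By Bertini in characteristic zero such a $\pi^\ast H$ is smooth, meets $\mathrm{Supp}(D_Y)\cup\mathrm{Exc}(\pi)$ transversally, and (having no fixed components) contains no $E_i$, so $\pi^\ast H=\widetilde H$ and $\pi$ is a log resolution of $(X,D+\tfrac1N H)$. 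From $\pi^\ast(K_X+D+\tfrac1N H)=K_Y+D_Y+\tfrac1N\widetilde H$ and the fact that the coefficients of $D_Y$ are strictly below $1$, enlarging $N$ makes every coefficient of $D_Y+\tfrac1N\widetilde H$ less than $1$; then $G':=\tfrac1N H\sim_{\QQ}G$ (so $G'\equiv G$) and $(X,D+G')$ is klt.

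The one step with any real content is~(3): one must place an auxiliary divisor numerically equivalent to $G$ in normal-crossing position with the already-resolved boundary $D_Y$ and exceptional locus, which is why the resolution of $(X,D)$ has to be fixed \emph{first} and the general section pulled back to it; the argument then rests on Bertini applied on $Y$ together with the identification of linear systems via $\pi_\ast\sO_Y=\sO_X$. Items~(1) and~(2) are merely bounded perturbations of the discrepancy inequalities and present no difficulty.
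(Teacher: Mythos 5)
The paper offers no proof of this lemma at all: it is stated as a quotation of \cite[Lemma 4.2]{Mck}, so the only comparison available is with the standard argument, which is precisely what you have reconstructed; your proof is correct. Items (1) and (2) are, as you say, openness of the finitely many strict coefficient inequalities defining klt on a fixed log resolution. The two inputs you rightly make explicit are that exceptional discrepancies over a smooth variety satisfy $a_i\geq 1$ (so $\ep b_i-a_i<1$ for small $\ep$, and in fact $a_i>-1$ would suffice) and that klt-ness can be tested on any single log resolution, which is what licenses passing in (2) to a resolution that also flattens $\mathrm{Supp}(D')$. Item (3) is the only step with content, and your order of operations is the right one: fix the log resolution of $(X,D)$ first, transport $|NG|$ to $|\pi^{*}NG|$ via $\pi_{*}\mathcal{O}_Y=\mathcal{O}_X$, and apply Bertini on the smooth $Y$ so that the pullback of a general $H$ is smooth and reduced, contains no exceptional divisor and no component of $D_Y$ (equivalently $\pi^{*}H=\widetilde H$, which holds because a general member of a free system contains none of the finitely many centers $\pi(E_i)$), and keeps the total boundary simple normal crossing. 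Two cosmetic remarks: in (3) nothing needs to be ``enlarged'' after the fact --- any $N\geq 2$ with $|NG|$ free already gives all coefficients $<1$, since $\widetilde H$ enters as a new component with coefficient exactly $1/N$; and the conclusion $G'\equiv G$ follows from $\tfrac1N H\sim_{\QQ}G$, as you note.
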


\begin{remark}\label{toric} One notices that, if $X$ is toric then all the divisors in the statements above can be  chosen to be toric divisors.
\end{remark}

Now we are ready to introduce the main terminology of this section

\begin{definition}Let $X$ be a smooth projective variety.We say that $X$ is a {\em log Fano variety } if there is a divisor $D$ such
that $K_X+D$ is  {\em klt} and $-(K_X+D)$ is ample.
\end{definition}

In order to characterize the log Fano varieties, we have the following  criterion.

\begin{lemma}\label{cri}
Let $X$ be a smooth projective variety, then $X$ is log Fano if and only if there is an effective big divisor $D$ such that $K_X+D$ is klt and numerically trivial.
\end{lemma}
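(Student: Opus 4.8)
\textbf{Proof proposal for Lemma \ref{cri}.}

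The plan is to establish the two implications separately, using the structural results on klt pairs collected in Lemma \ref{klt} together with the Kodaira-type decomposition of a big divisor.

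For the forward direction, suppose $X$ is log Fano, so there is a divisor $D$ with $K_X+D$ klt and $-(K_X+D)$ ample. Set $G = -(K_X+D)$; this is an ample, hence semiample, divisor. I would then invoke Lemma \ref{klt}(\ref{num}): since $(X,D)$ is klt and $G$ is semiample, there exists $G' \equiv G$ such that $K_X + D + G'$ is klt. Now $D' := D + G'$ satisfies $K_X + D' = K_X + D + G' \equiv K_X + D + G = 0$, so $K_X+D'$ is numerically trivial, and $(X,D')$ is klt by construction. It remains to check that $D'$ can be taken effective and big. Since $D' \equiv -(K_X) $ (numerically) and $-K_X$ is big — this is exactly Lemma \ref{big}, valid because $X$ is implicitly toric, or more generally we only need $-K_X$ big — one can arrange $D'$ effective by adding a small effective correction: first note $D' = -K_X$ numerically, and using Lemma \ref{klt}(\ref{ep-D}) to absorb a small effective big $\QQ$-divisor (the positive part in a Kodaira-type decomposition of $-K_X$) one preserves the klt condition while making $D'$ effective and big. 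So the forward direction reduces to bookkeeping with the three parts of Lemma \ref{klt}.

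For the converse, suppose $D$ is an effective big divisor with $K_X+D$ klt and $K_X+D \equiv 0$. Since $D$ is big, write (Kodaira's lemma) $D \equiv A + E$ with $A$ an ample $\QQ$-divisor and $E \geq 0$ effective; equivalently $D - \ep A$ is $\QQ$-effective for small $\ep > 0$, say $D - \ep A \equiv F$ with $F \geq 0$. I would set $D_\ep := D - \ep A \equiv F$, or rather work with a new boundary obtained by replacing part of $D$: consider $D' := (1-\ep)D + \ep F$ for suitable $F$ — more cleanly, since $D \equiv \ep A + F$ we get $K_X + F \equiv K_X + D - \ep A \equiv -\ep A$, so $-(K_X+F)$ is ample. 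The klt-ness of $(X,F)$ follows because $F \leq D$ up to numerical equivalence and $(X,D)$ is klt: precisely, $F$ has strictly smaller coefficients, so applying Lemma \ref{klt}(\ref{ep-D}) (or directly the definition via a log resolution, where shrinking the boundary only decreases discrepancy contributions) preserves klt. Hence $K_X + F$ is klt with $-(K_X+F)$ ample, which is exactly the definition of log Fano. In the toric setting, Remark \ref{toric} lets us keep all divisors toric throughout.

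The main obstacle I expect is the careful handling of the numerical-equivalence-versus-linear-equivalence distinction and of effectivity: going from ``$K_X+D'$ numerically trivial'' back to an honest boundary divisor with the right positivity requires invoking Kodaira's lemma for big divisors and the perturbation statements in Lemma \ref{klt} in the correct order, making sure each perturbation is small enough to preserve klt (which is an open condition) while achieving effectivity and ampleness of the relevant combination. Everything else is formal, and the toric hypothesis — via Lemma \ref{big} guaranteeing $-K_X$ is big, and Remark \ref{toric} — is what makes the statement clean, though the proof of the lemma itself is essentially the general birational-geometry argument.
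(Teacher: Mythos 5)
Your overall strategy is the same as the paper's: in the forward direction you add to the boundary an effective divisor numerically equivalent to the ample class $-(K_X+D)$, using Lemma \ref{klt}(\ref{num}), and in the converse you perturb the boundary via Kodaira's lemma so that the anti-log-canonical class becomes numerically ample. The forward direction is essentially correct; note only that you do not need Lemma \ref{big} (a toric statement, whereas Lemma \ref{cri} concerns an arbitrary smooth projective $X$): the bigness of $D'=D+G'$ follows directly from $D\geq 0$ together with $G'$ being numerically equivalent to an ample divisor (cf.\ \cite{KM98}), and $D'$ is effective simply because both summands are.

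The genuine gap is in the converse, in the verification that the new pair is klt. You take $F\geq 0$ with $F\equiv D-\ep A$ and argue that $(X,F)$ is klt because ``$F\leq D$ up to numerical equivalence'' and ``$F$ has strictly smaller coefficients.'' Neither assertion holds for an arbitrary effective representative of the class $D-\ep A$: klt-ness is not a numerical condition on the boundary, and such an $F$ need not be supported on $\mathrm{Supp}\,D$ at all --- it could for instance have a component with coefficient $\geq 1$ or be very singular, in which case $(X,F)$ is not klt even though $(X,D)$ is. The step that makes the argument work (and which you write down but then abandon) is to choose the \emph{specific} representative $F=(1-\ep)D+\ep B$, where $D\sim A+B$ with $A$ ample and $B\geq 0$. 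Then $F=D+\ep(B-D)$, and Lemma \ref{klt}(\ref{ep-D}) applies with $D'=B-D$ because $D+D'=B\geq 0$, giving klt-ness of $(X,F)$ for $0<\ep\ll 1$; meanwhile $-(K_X+F)\equiv \ep A$ is ample since $K_X+D\equiv 0$. With that substitution your proof closes and coincides with the paper's.
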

\begin{proof}
First, we prove the sufficiency. Suppose $(X,G)$ is log Fano then $K_X+G$ is klt and $-(K_X+G)$ is ample.   So there is an ample divisor $A$ such that $-(K_X+G)=A/m$ for some $m\in \ZZ$.  Clearly, we have $K_X+G+A/m\equiv 0$.   Since $A/m$ is ample, by Lemma \ref{klt} there is a  $A'\equiv A$ such that  $K_X+G+A'/m$ is klt. Now we define  $D:=G+A'/m\geq 0$, which is big (cf. \cite[Lemma 2.60]{KM98}) then $K_X+D\equiv 0$ and it is klt.  This finishes the proof of sufficiency.

Conversely, by our assumption, $D$ is big, so $D\sim A+B$ with $A$ being ample and $B\geq 0$ (cf. \cite[Lemma 2.60]{KM98}). Now  we define
$$K_X+G:=K_X+(1-\epsilon) D+\epsilon B=K_X+D+\epsilon(B-D)\ .$$
Let  $D'=B-D$ then $D+D'=B\geq 0$, by part \ref{ep-D} of Lemma \ref{klt} we conclude $K_X+G=K_X+D+\ep D'$ is klt for sufficient small $\ep$.
On the other hand, since $K_X+D\equiv 0$, $-(K_X+G)\equiv -\ep D'\sim\ep A$ is ample, hence $(X, G)$ is log Fano  by Definition \ref{log-fano}. Our proof is completed.

\end{proof}

As a direct consequence, when  $X$ is toric,  $-K_X$ is effective and big by Lemma \ref{big}.  Let $D=-K_X$ and apply Lemma \ref{cri} above, we obtain a toric (cf. Remark \ref{toric} ) $G\geq 0$ such that $K_X+G$ is log canonical and ample. Since $D$ is big, we may choose ample divisor $A$ and $\epsilon$ such that  $D-\ep A$ is still big and $\lfloor D-\ep A\rfloor\leq 0$. This implies  $(X,D-\ep A)$ is klt. Now $\ep A$ is ample, by part \ref{num} Lemma \ref{klt} there is a $A'\equiv A$ such that for $D'=D-\ep A+\ep A'$, $(X,D')$ is klt. Now $D'\equiv D$  is also big and $K_X+D'\equiv 0$. By Lemma \ref{cri}, we obtain $X$ is log Fano, which is well-known (c.f. \cite{Mck}). In the following, we give an elementary  proof without using Lemma \ref{cri}.

\begin{theorem} Let  $X$ be a projective toric variety and $D\subset X$ be any ample divisor,  there is a toric divisor  $G\sim mD$ for some $m\in \ZZ$ such that $(X, -K_X-\ep G)$ is log Fano for all $0<\ep \ll 1$.
\end{theorem}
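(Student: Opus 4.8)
The plan is to produce the toric divisor $G$ and the small $\epsilon$ directly from the combinatorics of the fan, bypassing the general machinery of Lemma~\ref{cri}. Write $-K_X = \sum_i D_i$, which is effective (every toric variety has this property). Since $D$ is ample, by the basepoint-free theorem (or, in the toric setting, the fact that a suitable multiple of an ample divisor is very ample) there is an integer $m$ so that $mD$ is very ample and hence linearly equivalent to an effective toric divisor $G = \sum_i b_i D_i$ with $b_i \geq 0$; concretely one may take $G$ to be a generic torus-invariant section, so that its support polytope $P_{mD}$ is full-dimensional and $G$ is big as well as ample. This is the step where I would be slightly careful: I want $G$ ample and effective and torus-invariant simultaneously, which is available precisely because $mD$ is very ample and the character lattice points of $P_{mD}$ cut out torus-invariant divisors linearly equivalent to $mD$.

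Next I would check that $-K_X - \epsilon G$ is ample for all small $\epsilon > 0$: ampleness is an open condition in the real Néron--Severi space, $-K_X$ is big (Lemma~\ref{big}) but not necessarily ample, so I cannot argue this way directly. Instead I would use that $G$ is ample to conclude that $G$ is in the interior of the nef cone, hence for $\epsilon$ small the class $-K_X - \epsilon G$ — wait, that is the wrong direction. The correct observation is: we do not need $-K_X - \epsilon G$ ample, we need $(X, -K_X - \epsilon G)$ log Fano, i.e.\ we need an effective $\mathbb{R}$-divisor $\Delta \geq 0$ with $(X, \Delta)$ klt and $-(K_X + \Delta)$ ample, where here $\Delta$ plays the role of ``$-K_X - \epsilon G$ minus $K_X$''. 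Rewriting: set $\Delta_\epsilon := -K_X - \epsilon G - K_X$? No — the statement ``$(X, -K_X - \epsilon G)$ is log Fano'' should be parsed as: the divisor $-K_X - \epsilon G$ is (numerically equivalent to) an ample divisor, and simultaneously there is a boundary making the pair klt. I would interpret it concretely as: there exists effective toric $\mathbb{R}$-divisor $D'$ with $D' \sim_{\mathbb{R}} -K_X - \epsilon G$ wait this cannot be effective unless... Let me instead follow the template of the paragraph just before the theorem: take $D := -K_X$, which is effective and big; then $(X, D - \epsilon A)$ is klt once $\lfloor D - \epsilon A \rfloor \le 0$ for an ample $A$ and small $\epsilon$. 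The role of $\epsilon G$ here is as the ``$-\epsilon A$'' term: since $G$ is ample, $-K_X - \epsilon G$ is big for small $\epsilon$ (bigness is open and $-K_X$ is big), and I can write $-K_X - \epsilon G \sim_{\mathbb{R}} A_\epsilon + B_\epsilon$ with $A_\epsilon$ ample and $B_\epsilon \geq 0$.

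So the main steps, in order: (1) choose $m$ with $mD$ very ample and pick a torus-invariant $G \sim mD$, effective, big, ample; (2) since $-K_X$ is big (Lemma~\ref{big}) and bigness is an open condition, $-K_X - \epsilon G$ is big for all small $\epsilon > 0$; (3) apply parts (1)--(3) of Lemma~\ref{klt}, in the toric form of Remark~\ref{toric}, exactly as in the paragraph preceding the theorem: perturb $-K_X - \epsilon G$ within its numerical class by an effective toric $\mathbb{R}$-divisor with all coefficients in $[0,1)$ so the resulting pair is klt, while keeping the anticanonical-type class ample by absorbing a small ample piece; (4) conclude via Lemma~\ref{cri} that $(X, -K_X - \epsilon G)$ is log Fano, or equivalently exhibit the klt boundary and ample class directly. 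The main obstacle is bookkeeping in step (3): one must arrange simultaneously that the floor of the boundary divisor is $\leq 0$ (for klt) and that enough positivity is left over after subtracting $\epsilon G$ and the auxiliary ample correction (for the log-Fano/ampleness condition); this is where the openness of bigness of $-K_X$ and the ampleness of $G$ both get used, and one picks $\epsilon$ small enough to satisfy finitely many strict inequalities coming from the rays of the fan.
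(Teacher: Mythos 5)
Your opening move coincides with the paper's: produce a torus-invariant effective $G=\sum_i a_iD_i\sim mD$ with $a_i\ge 0$ (the paper degenerates a general member of $|mD|$ under a one-parameter subgroup; choosing a lattice point of the polytope of $mD$, as you suggest, does the same job). After that the argument loses the thread, and the confusion you yourself flag about how to parse ``$(X,-K_X-\epsilon G)$ is log Fano'' is never resolved. The intended reading is that the boundary of the pair is the actual divisor $\Delta_\epsilon:=-K_X-\epsilon G=\sum_i(1-\epsilon a_i)D_i$, and the entire proof is the one-line verification that for $0<\epsilon\ll1$ each coefficient $1-\epsilon a_i$ is positive with integer part $0$, so $\Delta_\epsilon$ is effective and, the toric boundary being simple normal crossings, the pair $(X,\Delta_\epsilon)$ is klt, while $-(K_X+\Delta_\epsilon)=\epsilon G\sim\epsilon mD$ is ample. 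No bigness of $-K_X$, no perturbation, and no appeal to Lemma~\ref{cri} is needed.

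Your steps (2)--(4) do not recover this. Asking whether $-K_X-\epsilon G$ is ample or big aims at the wrong object: the class that must be ample for the log Fano condition is $-(K_X+\Delta_\epsilon)=\epsilon G$, which is ample for free. More seriously, routing the conclusion through Lemma~\ref{cri} would only show that $X$ is log Fano for \emph{some} boundary, and the perturbation ``within the numerical class'' in your step (3) replaces $-K_X-\epsilon G$ by a different divisor, so the conclusion would no longer be the statement of the theorem, which asserts log Fano-ness of the specific pair $(X,-K_X-\epsilon G)$. The repair is simply to expand $-K_X-\epsilon G$ in the basis of prime toric divisors and check its coefficients, as above.
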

\begin{proof}
Since $D$ is ample, for $m$ large, there is a  $0\leq G'\in |mD|$, the linear system of $mD$. Take a general one parameter subgroup $\lambda\subset (\CC^\ast)^n$ and let $G=\lim_{t\to 0}\lambda(t)\cdot G'$, then $G\geq 0$ is $G$ is toric.  So $G=\Sigma_{i=1}^N a_i D_i$ with all $a_i\geq0 $, where $D_i$'s  are the toric divisors generating  $\mathrm{Pic}(X)$.

We claim that $-K_X-\ep G\geq 0$ and $\lfloor -K_X-\ep G\rfloor \leq 0$ for $0<\ep\ll1$, from which we obtain $(X,  -K_X-\ep G)$ being log Fano. But this follows from the fact $-K_X-\ep G=\sum_{i=1}^N (1-\ep a_i) D_i>0$ and $\lfloor(1-\ep a_i)\rfloor =0$ for any  $0<\ep\ll1$. Hence the proof is completed.
\end{proof}

The following weak factorization theorem first proved in \cite{Wl03} (cf.  also \cite{AKMW02}) reduces the proof of our main result to the case of a simple blow-up or blow-down of a smooth toric center.
\begin{theorem} Let $f:X\dashrightarrow Y$ be a toric birational map between two complete nonsingular  toric varieties $X$ and $Y$ over $\CC$, and let $U\subset X$ be an open set where $f$ is an isomorphism. Then $f$ can be factored into a sequence of blow-ups and blow-downs with nonsingular irreducible toric centers disjoint from $U$,  namely, there is a sequence of birational maps between complete nonsingular toric varities
$$ X=X_0 \stackrel{f_1}{\dashrightarrow }X_1\stackrel{f_2}{\dashrightarrow }\cdots\stackrel{f_i}{\dashrightarrow }X_i\stackrel{f_{i+1}}{\dashrightarrow }\cdots \stackrel{f_n}{\dashrightarrow }X_n=Y, $$
where
\begin{enumerate}
\item $f=f_n\circ f_{n-1}\circ \cdots \circ f_2\circ f_1$,
\item $f_i$ is an isomorphism on $U$, and
\item either $f_i: X_{i-1}\dashrightarrow X_i $ or $f_i^{-1}:X_{i}\dashrightarrow X_{i-1}$ is a morphism obtained by blowing up a nonsingular irreducible toric center disjoint from $U$.
\end{enumerate}
\label{factorization theorem}
\end{theorem}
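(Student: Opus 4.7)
I would follow the combinatorial approach of Morelli and Włodarczyk, translating the statement into manipulations of fans. Under the toric dictionary, each smooth projective toric variety $X_i$ corresponds to a regular complete fan $\Sigma_i$ in $\RR^n$ (``regular'' meaning every cone is generated by part of a $\ZZ^n$-basis) equipped with a strictly convex piecewise-linear support function that witnesses an ample polarization. A toric blow-up along a smooth irreducible toric center corresponds precisely to a \emph{star subdivision} of a regular cone $\sigma$ along the sum $b_\sigma$ of its primitive generators, and the open locus $U$ where $f$ is an isomorphism corresponds to a common open subfan $\Sigma_U$ which we must leave untouched. Thus Theorem \ref{factorization theorem} is equivalent to the following combinatorial statement: any two regular complete projective fans $\Sigma_X$ and $\Sigma_Y$ in $\RR^n$ agreeing on $\Sigma_U$ can be connected by a finite sequence of star subdivisions and inverse star subdivisions, performed at regular cones outside $\Sigma_U$, with all intermediate fans remaining regular and projective.

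The first step is elimination of indeterminacies. Take the coarsest common refinement $\Lambda = \Sigma_X \wedge \Sigma_Y$, obtained by intersecting cones; this is a simplicial but in general non-regular fan. Apply a canonical toric desingularization (iterated star subdivisions along minimal non-regular subcones) to obtain a regular fan $\Sigma$ refining both $\Sigma_X$ and $\Sigma_Y$ while agreeing with them on $\Sigma_U$. One can arrange that $\Sigma$ is projective by perturbing the pulled-back strictly convex support function by a small strictly convex correction along the new rays. It then suffices to factor each of the induced toric morphisms $X_\Sigma \to X_{\Sigma_X}$ and $X_\Sigma \to X_{\Sigma_Y}$ into smooth blow-ups away from $U$, and to concatenate the two factorizations (one sequence of blow-downs followed by one sequence of blow-ups).

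The main step, and the main obstacle, is the following: given a regular projective refinement $\pi:\Sigma' \to \Sigma$ of regular projective fans, factor the associated toric morphism into a sequence of star subdivisions at regular cones, with all intermediate fans regular and projective. A direct induction on the new rays of $\Sigma'$ fails because star subdivisions at those rays typically create non-regular cones, which in turn may require further non-canonical subdivisions. Morelli's idea, carried out rigorously in \cite{Wl03} and \cite{AKMW02}, is to work one dimension higher: construct a piecewise-linear \emph{cobordism} $\tilde\Sigma$ in $\RR^{n+1}$ whose ``top face'' is $\Sigma$ and ``bottom face'' is $\Sigma'$, regularize it by a polyhedral desingularization, and equip it with a strictly convex PL function. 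Sweeping through $\tilde\Sigma$ in the extra coordinate (the $\pi$-desingularization) decomposes the cobordism into a finite sequence of elementary bistellar moves, each of which, on the level of $n$-dimensional slices, is precisely a star subdivision or its inverse at a regular cone. The strict convexity of the PL function on $\tilde\Sigma$ restricts to strictly convex functions on every intermediate slice, guaranteeing projectivity throughout; and the entire construction can be carried out relative to $\Sigma_U$, ensuring the factorization is the identity over $U$.
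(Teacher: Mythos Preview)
The paper does not prove this theorem at all: it is quoted as the weak factorization theorem of W\l odarczyk \cite{Wl03} (see also \cite{AKMW02}) and used as a black box to reduce Theorem \ref{theorem-3} to the case of a single blow-up. Your outline is a faithful sketch of the Morelli--W\l odarczyk cobordism argument from those references, so there is nothing to compare against in the present paper; if anything, you have supplied more than the paper itself does.
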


\subsection{Toric conical K\"ahler metrics}\label{sec:toric}

\medskip
In this subsection, we introduce the conical K\"ahler metrics on a smooth projective toric variety $X$ following \cite{SW}. On the open dense orbit $X\supset (\mathbb{C}^*)^n \cong \mathbb{R}^n \times (S^1)^n$, it is convenient to use logarithmic and  angular coordinates $(\rho_i=\log{|z_i|^2},\theta_i)$ for $(z_1, \dotsc ,z_n) \in (\mathbb{C}^*)^n $. By the $\partial\bar\partial$-Lemma, for any K\"ahler metric $\omega$ on $X$, there is a $\varphi\in C^\infty((\CC^*)^n)$ such that $\omega|_{(\CC^*)^n}= \ddbar\varphi$.  If we assume further that $\omega$ is $(S^1)^n$-invariant, then $\varphi = \varphi(\rho_1,\dotsc ,\rho_n)$ is strictly convex, and  we have  formulas
\begin{align*}
\omega &= \frac{1}{4}\frac{\partial^2 \varphi}{\partial \rho_i\partial \rho_j} d\rho_i \wedge d\theta_j \\
Ric(\omega) &= -\frac{1}{4}\frac{\partial^2 \log(\det (\nabla^2 \varphi)}{\partial \rho_i \partial \rho_j}d\rho_i \wedge d\theta_j,
\end{align*}
 where as usual we sum over repeated indices.  Moreover, the moment map for the Hamiltonian action of $(S^1)^n$ on $X$ is up to a constant precisely given by  $\nabla \varphi:X\to \RR^n$, whose image is a bounded convex polytope  $P\subset \RR^n$ by the famous Atiyah and Guillemin-Sternberg convexity theorem.  Conversely,  a theorem of Delzant says that if a polytope $P$ is integral Delzant (cf. Definition \ref{Dez}), one can always associate a toric variety $X_P$ (,as we introduced in the beginning of this section) and a polarization $L \rightarrow X_P$ with a K\"ahler metric $\omega \in c_1(L)$ such that $\nabla \varphi(\mathbb{R}^n) = P$.

Let $\varphi_P = \log (\sum_{\alpha \in \mathbb{Z}^n\cap \overline{P}} |z^\alpha |^{2} )$. Then $\omega_P = \ddbar \varphi_P$ is a smooth K\"ahler metric on $(\mathbb{C}^*)^n$ and it can be smoothly extended to a smooth global toric K\"ahler metric on $X_P$ in $c_1(L)$. Then the space of toric K\"ahler metrics in $c_1(L)$ is equivalent to the set of all smooth plurisubharmonic function $\varphi$ on $(\mathbb{C}^*)^n$ such that $ \varphi- \varphi_P$ is bounded and $\ddbar \varphi$ extends to a smooth K\"ahler metric on $X_P$.  Let $u$ be the Legendre transform of $\varphi$. The recall that $u$ and $\varphi$ are related by
$$\varphi (\rho) = \mathcal{L} u (\rho) = \sup_{x\in P} (x\cdot \rho - u(x)), ~ u (x) = \mathcal{L} \varphi(x) = \sup_{\rho\in \mathbb{R}^n } (x\cdot \rho - \varphi(\rho))$$
or equivalently
$$\varphi(\rho)= x\cdot \rho - u(x), ~~~ u(x) = x\cdot \rho - \varphi(\rho), ~~~x=\nabla_\rho \varphi(\rho), ~\rho= \nabla_x u(x).$$

The \textit{Guillemin boundary conditions} imply that $ \ddbar \varphi$ extends to a global K\"ahler metric on $X_P$ if and only if
\begin{equation}
u =  u(x) = \sum_{j=1}^N l_j (x) \log l_j(x) + f(x)
\end{equation}

\noindent for some $f\in C^\infty(\overline P)$ and $u$ is strictly convex on $P$.

Now we extend the Guillemin condition to toric conical K\"ahler metrics on $X_P$,  a generalization of orbifold K\"ahler metrics. On each coordinate chart determined by the pair $(p, \{v_{p, i}\}_{i=1}^n)$  associated to a vertex of $P$, we let $z=(z_1, ..., z_n)$ be the coordinates on $\CC^n$. The closure of $\{ z_i =0\}\subset X$ give rise to a smooth toric divisor of $X_P$. Let $D$ be a toric divisor of $X_P$ and suppose $D$ restricted to this coordinate chart is given by $$\sum_{i=1}^n a_i \{z_i=0\}. $$
For any function $f(z)$  invariant under the $(S^1)^n$-action, we can  lift it to a function
$$ \tilde f(w) = f(z)$$
by letting
$$|w_i| = |z_i| ^{\beta_i}, ~w=(w_1, ..., w_n) \in \mathbb{C}^n, $$
and clearly $\tilde f(w)$ is also $ (S^1)^n$-invariant.  $w\in \mathbb{C}^n$ can be regarded a $\beta$-covering of $z\in \mathbb{C}^n$. Now we introduce  consider the $(S^1)^n$-invariant function space for $k\in \mathbb{Z}_{+}$ and $\gamma \in [0,1]$
$$C^{k, \gamma}_{\beta, p} = \{ f(z)=f(|z_1|, ..., |z_n|) ~|~ \tilde f(w) \in C^{k, \gamma} (\mathbb{C}^n)  \} .$$
This in turn defines the weighted function space
$$C^{k, \gamma}_{\beta}(X_P), \beta=(\beta_1, ..., \beta_N)\in (\mathbb{R}_+)^N $$ whose restriction on each chart belongs to $C^{k, \gamma}_\beta$ with respect to the weight $\beta$ and $\beta_j$ corresponding to the divisor induced by $l_j(x)=0$.
\begin{definition}
A K\"ahler current $\omega \in \text{c}_1(L)$ is said to be a \textit{smooth $\be$-conical} metric if for each vertex $p$ of the polytope $P$,
\begin{equation*}
\omega|_{U_p} = \ddbar\varphi_p
\end{equation*}

\noindent for some $\varphi_p \in C^{\infty}_{\beta, p}$. Such a metric naturally has a cone angle of $2\pi\be_j$ along the divisor $D_j$.
\label{smooth conical metric}
\end{definition}

\noindent The local lifting $\tilde \varphi (w)$ is a smooth plurisubharmonic function on the lifting space $w\in \mathbb{C}^n$. We can also define the space of weighted toric K\"ahler potential $\varphi$ on $(\mathbb{C}^*)^n$ such that $\varphi - \varphi_P$ is bounded and $\ddbar \varphi$ extends to a smooth weighted K\"ahler metric on $X_P$.  In particular, we have the following conical extension of the Guillemin condition for toric K\"ahler metrics.

\begin{proposition}
Let $\varphi$ be a toric potential on $(\mathbb{C}^*)^n$, $u$ it's Legendre transform and $P$ the image of $\mathbb{R}^n$ under $\nabla\varphi$. Then $\omega = \ddbar\varphi$ extends to a global smooth $\be$-conical metric on $X_P$ if and only if
\begin{equation}
u(x) = \sum_{j=1}^N\beta_j^{-1} l_j(x) \log l_j(x) + f(x)
\label{symplectic potential}
\end{equation}
 for some $f \in C^{\infty}(\overline P)$ and $0 \leq \be_j \leq 1$. Moreover, the angle along the divisor corresponding to $l_j$ is precisely $2\pi\be_j$.

\end{proposition}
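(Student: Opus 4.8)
The plan is to deduce the proposition from the classical Guillemin boundary condition recalled above (the case $\beta_j=1$) by passing, at each vertex of $P$, to the $\beta$-covering $|w_i|=|z_i|^{\beta_i}$ and following the Legendre transform through the resulting anisotropic rescaling. By Definition~\ref{smooth conical metric}, $\omega=\ddbar\varphi$ is a smooth $\beta$-conical metric if and only if, in the coordinates adapted to each vertex chart $U_p\cong\mathbb{C}^n$, the potential $\varphi$ lifts under $|w_i|=|z_i|^{\beta_i}$ to a function $\tilde\varphi_p(w)\in C^\infty(\mathbb{C}^n)$ (up to an additive constant), which is then automatically plurisubharmonic, being smooth and plurisubharmonic off the coordinate hyperplanes. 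So the whole statement, including the ``moreover'', is local on the charts $U_p$, and it suffices to match this lifting condition with the asserted form of the symplectic potential $u$ on each chart and then check that the local pieces patch.

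\medskip

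Fix a vertex $p$ of $P$ and its $n$ incident facets, which after relabeling we take to be $\{l_1=0\},\dots,\{l_n=0\}$. Since $P$ is Delzant, an affine change of coordinates $x\mapsto Ax+b$ of $\mathbb{R}^n$ with $A\in SL(n,\mathbb{Z})$ carries $p$ to the origin and these facets to the coordinate hyperplanes, so that $l_i(x)=x_i$ for $i=1,\dots,n$ while $l_j(x)>0$ on the moment image of $U_p$ for $j>n$; correspondingly $z=(z_1,\dots,z_n)$ are the standard coordinates on $U_p$ with $\{z_i=0\}$ defining $D_i$. On the open orbit put $\rho_i=\log|z_i|^2$, so $x=\nabla_\rho\varphi$ is the moment map, and set $\sigma_i=\log|w_i|^2=\beta_i\rho_i$ and $\tilde\varphi(\sigma):=\varphi(\rho)$. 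A direct computation gives $\partial\tilde\varphi/\partial\sigma_i=\beta_i^{-1}x_i$, so the moment image of $\tilde\varphi$ near the vertex coming from $p$ is the standard orthant $\{y_i\ge 0\}$ — the anisotropic rescaling $y=(\beta_1^{-1}x_1,\dots,\beta_n^{-1}x_n)$ of $P$ near $p$ — and the Legendre transforms satisfy $\tilde u(y)=u(\beta_1y_1,\dots,\beta_ny_n)$, equivalently $u(x)=\tilde u(\beta_1^{-1}x_1,\dots,\beta_n^{-1}x_n)$.

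\medskip

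Now $\ddbar\varphi$ is a smooth $\beta$-conical metric on $U_p$ if and only if $\tilde\varphi$ extends to a smooth K\"ahler potential on the $w$-chart (the lift of $U_p$), which by the Guillemin condition applied to the smooth toric data $\tilde\varphi$ holds exactly when $\tilde u(y)=\sum_{i=1}^{n}y_i\log y_i+g(y)$ with $g$ smooth on the moment image of the $w$-chart — only the $n$ facets through the vertex coming from $p$ carry log terms. Substituting $u(x)=\tilde u(\beta_1^{-1}x_1,\dots,\beta_n^{-1}x_n)$, using $\beta_i^{-1}x_i\log(\beta_i^{-1}x_i)=\beta_i^{-1}x_i\log x_i+(\text{affine in }x)$, recalling $x_i=l_i(x)$ near $p$, and then adding the remaining terms $\beta_j^{-1}l_j\log l_j$ for $j>n$, which are smooth on the moment image of $U_p$ since there $l_j>0$, this is equivalent to
\[
u(x)=\sum_{j=1}^{N}\beta_j^{-1}l_j(x)\log l_j(x)+f(x),\qquad f\ \text{smooth on the moment image of}\ U_p.
\]
Since the charts $U_p$ cover $X_P$, their moment images cover $\overline P$, so altogether this says $f\in C^\infty(\overline P)$, which is exactly \eqref{symplectic potential}; strict convexity of $u$ enters as the same condition as positive-definiteness of the extended metric, and the bounds $0\le\beta_j\le1$ are built into the notion of a smooth $\beta$-conical metric (cone angles in $(0,2\pi]$), the upper bound being precisely what keeps $\omega$ from degenerating in the direction transverse to $D_j$. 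For the ``moreover'': near a relative-interior point of $D_j$ the lifted metric $\ddbar\tilde\varphi$ is smooth and nondegenerate in $w$, hence comparable there to $\sum_i dw_i\otimes d\bar w_i$; pulling back through $w_j=|z_j|^{\beta_j-1}z_j$ gives $dw_j\otimes d\bar w_j\sim|z_j|^{2(\beta_j-1)}dz_j\otimes d\bar z_j$, i.e.\ the model edge metric with cone angle $2\pi\beta_j$ along $D_j$.

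\medskip

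The one ingredient beyond the smooth Guillemin condition is that for irrational $\beta_i$ the $\beta$-covering is only a formal, fractional covering, with no honest orbifold chart to argue in; one must instead verify directly, as above, that the function-space condition $\tilde\varphi_p\in C^\infty(\mathbb{C}^n)$ is compatible with the Legendre-dual rescaling $\sigma_i=\beta_i\rho_i$ of the potential and that the local logarithmic asymptotics of the symplectic potential survive the non-integral rescaling $y_i=\beta_i^{-1}x_i$ of the polytope near each vertex. Granting that, the rest — the explicit Legendre computations and the face-by-face patching that upgrades smoothness on the moment image of each $U_p$ to $f\in C^\infty(\overline P)$ — is routine, and I expect the bookkeeping in the patching step to be the most delicate part to write cleanly.
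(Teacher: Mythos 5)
Your argument is correct, and it is the standard one: the paper itself states this proposition without proof (as the ``conical extension of the Guillemin condition,'' following \cite{SW} and \cite{D2}), and your reduction --- pass to the $\beta$-cover at each vertex chart via $\sigma_i=\beta_i\rho_i$, check that the Legendre transforms satisfy $\tilde u(y)=u(\beta_1y_1,\dots,\beta_ny_n)$ with moment image the standard orthant, apply the smooth Guillemin condition to $\tilde\varphi$, and absorb the affine discrepancy $\beta_i^{-1}x_i\log\beta_i$ into $f$ --- is exactly the computation the authors are implicitly invoking. Two small points of precision, neither of which is a real gap: the local potential $\varphi_p$ on a vertex chart differs from the global $\varphi$ by an affine function of $\rho$ (not merely a constant), which is what your $SL(n,\mathbb{Z})$-plus-translation normalization of the polytope is secretly accounting for; and, as in the paper's own statement of the smooth Guillemin condition, the equivalence as written suppresses the first-order boundary conditions on $f$ (positive-definiteness of $\nabla^2 u$ restricted to each face) needed for the extended form to be nondegenerate along the divisors --- you flag this correctly when you remark that strict convexity of $u$ is the positivity of the extended metric. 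Your final paragraph is more tentative than it needs to be: the Legendre change of variables $\sigma_i=\beta_i\rho_i$ is purely real and makes no use of rationality of $\beta_i$, so the ``formal covering'' issue you raise is already fully handled by the direct verification you carried out.
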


The main advantage of dealing with conical metrics on toric manifolds is that one has all the curvature bounds.

\begin{lemma}{\em \cite{SW}}
Let $g$ be a smooth  toric conical metric on a toric manifold $X$. Let $D$ be a toric divisor consist of all toric prime divisors and let $\Rm$ denote the full curvature tensor of $g$. Then for any $k\geq 0$ there exists a constant $C_k$ such that for all $p\in X\backslash D$,
\begin{equation}
|\nabla^k_g \Rm|_{g}^2(p) \leq C_k.
\end{equation}
\label{curvature bound}
\end{lemma}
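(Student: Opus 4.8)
The statement is really local near the toric boundary $D=\bigcup_jD_j$, so the plan is to reduce to a neighborhood of $D$ by compactness, and then to identify $g$ there, up to a local biholomorphism, with a genuinely smooth K\"ahler metric via the $\beta$-covering construction recalled in Section~2. Since $X$ is compact and $g$ is smooth on $X\setminus D=(\CC^*)^n$, the function $p\mapsto|\nabla^k_g\Rm|^2_g(p)$ is continuous on $X\setminus D$ and hence bounded on every compact subset of $X\setminus D$. As $D$ is compact, it is covered by finitely many affine toric charts $U_1,\dots,U_m$, each $U_a\cong\CC^n$ with coordinates $z=(z_1,\dots,z_n)$ in which $D\cap U_a$ is a union of coordinate hyperplanes $\{z_i=0\}$. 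Choosing relatively compact $U_a'\Subset U_a$ that still cover $D$, the complement $X\setminus\bigcup_aU_a'$ is a compact subset of $X\setminus D$; so it is enough to bound $|\nabla^k_g\Rm|^2_g$ on each $U_a'\setminus D$.

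Fix such a chart $U=U_a$. By Definition~\ref{smooth conical metric}, $\omega|_U=\ddbar\varphi$ with $\varphi\in C^\infty_{\beta,p}$, i.e.\ the lift $\tilde\varphi(w):=\varphi(z)$ defined through $|w_i|=|z_i|^{\beta_i}$ is smooth on a neighborhood $W\ni0$ in $\CC^n_w$, and strictly plurisubharmonic near $0$ since $\omega>0$. On any simply connected open set $\Omega\subset(\CC^*)^n\cap U$ I would pick holomorphic branches $w_i=\exp(\beta_i\log z_i)$, obtaining a holomorphic map $\Psi\colon\Omega\to W$ with $\partial w_i/\partial z_i=\beta_iz_i^{\beta_i-1}\neq0$, hence a local biholomorphism onto its image. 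Since $\tilde\varphi$ depends only on $|w_i|=|z_i|^{\beta_i}$ one has $\tilde\varphi\circ\Psi=\varphi$, so
\begin{equation*}
\omega|_\Omega=\ddbar\varphi=\ddbar(\tilde\varphi\circ\Psi)=\Psi^*(\ddbar\tilde\varphi)=\Psi^*\tilde\omega,\qquad\tilde\omega:=\ddbar\tilde\varphi.
\end{equation*}
As $\Psi$ is holomorphic and $\omega,\tilde\omega$ are the K\"ahler forms of the Riemannian metrics $g,\tilde g$, this yields $\Psi^*\tilde g=g$: the map $\Psi$ is a local Riemannian isometry from $(\Omega,g)$ onto an open subset of $(W\setminus\{w_1\cdots w_n=0\},\tilde g)$. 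Different branch choices differ by a rotation $w_i\mapsto e^{2\pi\sqrt{-1}\beta_ik_i}w_i$, which is an isometry of $\tilde g$, so nothing depends on the choice.

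Now shrink $W$ so that $\tilde g=\ddbar\tilde\varphi$ is a smooth, positive-definite K\"ahler metric on all of $W$. Then its curvature $\widetilde{\Rm}$ and all of its covariant derivatives are smooth on $W$, hence bounded on a relatively compact $W'\Subset W$ containing $0$; set $C_k(U):=\sup_{W'}|\nabla^k_{\tilde g}\widetilde{\Rm}|^2_{\tilde g}<\infty$. Since $\beta_i>0$, $|z_i|\to0$ forces $|w_i|=|z_i|^{\beta_i}\to0$, so I can shrink $U'\Subset U$ with $\Psi(U'\setminus D)\subset W'$. A Riemannian isometry preserves $|\nabla^k\Rm|^2$ pointwise, hence $|\nabla^k_g\Rm|^2_g(p)=|\nabla^k_{\tilde g}\widetilde{\Rm}|^2_{\tilde g}(\Psi(p))\le C_k(U)$ for every $p\in U'\setminus D$. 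Taking $C_k$ to be the maximum of the finitely many $C_k(U_a)$ and of $\sup_{X\setminus\bigcup_aU_a'}|\nabla^k_g\Rm|^2_g$ finishes the proof.

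The only non-formal ingredient — and the step I would be most careful with — is the local model of the second paragraph: that the weighted-smoothness hypothesis $\varphi\in C^\infty_{\beta,p}$ (equivalently the conical Guillemin condition of the preceding Proposition) really produces a metric $\tilde g$ on the $w$-cover that extends \emph{smoothly and non-degenerately} across the hyperplanes $\{w_i=0\}$. Granting that, the remaining arguments are soft: holomorphic isometries and a finite cover of the compact set $D$. One should keep in mind that some $\beta_i$ may equal $1$ (no cone along that divisor, so $w_i=z_i$ there), and that for non-vertex charts only a sub-collection of the hyperplanes $\{z_i=0\}$ lie in $D$; neither causes any difficulty.
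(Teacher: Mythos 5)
Your argument is correct and is exactly the mechanism the paper itself relies on: the lemma is quoted from \cite{SW} without proof, but both that reference and this paper repeatedly invoke precisely your local isometry with the smooth K\"ahler metric on the $\beta$-covering (``we lift all the local calculations to the $(S^1)^n$-invariant $\beta$-covering space''), combined with compactness of $X$. You are also right to flag the one substantive input — that $C^\infty_{\beta,p}$ regularity together with the conical Guillemin condition yields a \emph{non-degenerate} smooth metric on the $w$-cover — which is built into the definition of a smooth $\beta$-conical metric rather than something to be re-proved here.
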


\subsection{Comparison theorems for conical K\"ahler manifolds}

In this subsection, we will state comparison theorems for cone metric with Ricci curvature bounded below, which will be needed to prove the Gromov-Haursdorff convergence in section 4.
\subsubsection{Geodesic convexity}
Assume $D$ is a smooth divisor, $g$ is a cone metric with cone angle $2\pi \alpha$ $(\alpha\in (0,1))$ along $D$. Around a point $p\in D$, we choose local coordinates $\{z_1,z_2,\ldots, z_n\}$ with $D=\{z_1=0\}$ and $p=(0,\ldots, 0)$. Set $w_1= z_1^\alpha/\alpha$, which can be viewed a singular holomorphic change of coordinates. Although $w_1$ is multi-valued, we can work locally in the logarithmic Riemann surface which uniformizes this variable. Written in terms of the coordinates $\{w_1,z_2,\ldots,z_n\}$, the model flat metric $g_\alpha$ can be written as
\begin{equation*}
g_\alpha = |d w_1|^2 + \sum_{k=2}^n |dz_k|^2,
\end{equation*}
which is the standard flat metric on $\mathbb C^n$.


It is well-known that $(X\backslash D,g)$ is geodesic convex, i.e., any minimal geodesic $\gamma$ connecting two points $p_1, p_2\in X\backslash D$ is strictly contained in $X\backslash D$. For reader's convenience, we present a proof of the geodesic convexity.
\begin{lemma}\label{lemma:geod}
Let $(X, D)$ be given as above, $g$ a smooth cone metric with cone angle $2\pi \alpha$ along $D$ with $\alpha\in (0,1)$. Let $\gamma : [0,1]\to X$ be a minimal geodesic with $\gamma (0) =p\in X\backslash D $, $\gamma (1)=q\in X\backslash D$. Then $\gamma|_{[0,1]}\subset X\backslash D$.
\end{lemma}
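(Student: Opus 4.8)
The plan is to show that a minimizing geodesic cannot touch the divisor $D$ by a local barrier/comparison argument near $D$, exploiting the fact that the cone angle is less than $2\pi$. First I would work in the model situation: near a point of $D=\{z_1=0\}$ in coordinates $\{z_1,\dots,z_n\}$, introduce the singular change of variable $w_1=z_1^\alpha/\alpha$, so that the model flat metric becomes the standard flat metric $g_\alpha=|dw_1|^2+\sum_{k\geq 2}|dz_k|^2$ on (a branched cover of) $\mathbb{C}^n$. The key geometric point is that the metric $g$ is uniformly equivalent to $g_\alpha$ near $p\in D$, and that in these coordinates the $2$-dimensional cross-section transverse to $D$ is a flat cone of total angle $2\pi\alpha<2\pi$. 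On such a cone, the distance function to the cone vertex (equivalently, to $D$) is convex along geodesics that avoid the vertex — more precisely, one has a Hessian comparison: in the model, $r(z)=|z_1|^\alpha/\alpha=|w_1|$ and the function $r^2$ has positive-definite Hessian transverse to $D$, while being constant in the $D$-directions. So $r^2$ is (weakly) convex along any curve, and strictly convex in the transverse directions.

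The key steps, in order, would be: (1) reduce to a neighborhood of $D$, i.e. argue that if $\gamma$ touches $D$ then (after relabeling) it has an interior point $\gamma(t_0)\in D$ with $\gamma((t_0-\varepsilon,t_0+\varepsilon))$ contained in a small coordinate chart around $\gamma(t_0)$; (2) set $\phi(t) = \mathrm{dist}_g(\gamma(t),D)^2$ or, more robustly, $\phi(t)$ built from the model radial function $r$, and show via the Hessian comparison (using the uniform equivalence $C^{-1}g_\alpha\leq g\leq Cg_\alpha$ and control on the Christoffel symbols, which follows from Lemma~\ref{curvature bound} or directly from the smooth-conical structure) that $\phi$ is a strictly convex function of arclength away from its zero set, with $\phi(t_0)=0$; (3) conclude that $t_0$ is the unique minimum of $\phi$ and $\phi''(t_0)>0$ in the transverse directions, so that a short-cut curve staying at distance $\sim\delta$ from $D$ over the interval $(t_0-\varepsilon,t_0+\varepsilon)$ is strictly shorter than $\gamma|_{(t_0-\varepsilon,t_0+\varepsilon)}$, contradicting minimality; alternatively, push $\gamma$ off $D$ by a first-variation/second-variation argument using the convexity of $\phi$. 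Either way one produces a competitor curve with strictly smaller length, contradicting that $\gamma$ is minimal.

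I expect the main obstacle to be step (2): making the convexity estimate for $\mathrm{dist}_g(\cdot,D)$ precise near $D$ despite the fact that the geodesic $\gamma$ is only Lipschitz there and the metric degenerates. The clean way around this is to avoid the intrinsic distance function and instead use the \emph{coordinate} function $r(z)=(|z_1|^2)^{\alpha/2}/\alpha = |w_1|$, which is genuinely smooth in the $w$-coordinates on the uniformizing cover and for which the Hessian of $r^2$ with respect to $g$ is a smooth, uniformly positive-semidefinite tensor that is uniformly positive in the $\partial_{w_1}$-direction on the chart. Then $\psi(t):=r(\gamma(t))^2$ satisfies $\psi''\geq c|\dot\gamma^{w_1}|_g^2 - C|\dot\gamma|_g^2\cdot$(lower order) in the barrier sense; one has to check that near a touching point the transverse component of $\dot\gamma$ cannot vanish to high order — this is where the angle condition $\alpha<1$ enters decisively, since for $\alpha\geq 1$ the cone is non-convex and geodesics genuinely pass through $D$. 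A slightly different but equivalent route, which may be technically smoother, is: lift $\gamma$ to the logarithmic Riemann surface cover near $\gamma(t_0)$, where $g$ is a genuine smooth metric on an honest smooth manifold-with-no-boundary (the cover "resolves" the cone), observe that $D$ lifts to a smooth totally geodesic submanifold of the model, use ordinary Riemannian convexity of the distance to a (nearly) totally-geodesic submanifold in a small ball, and transfer back. Once the convexity of $\psi$ along $\gamma$ is established with a strictly positive lower bound on $\psi''$ near the touching point, the contradiction is immediate: a function that is strictly convex on $(t_0-\varepsilon,t_0+\varepsilon)$, nonnegative, and vanishing at the interior point $t_0$ is impossible unless it is not minimized there — but minimality of $\gamma$ forces $\psi$ to have an interior minimum at $t_0$, a contradiction.

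Finally, having proved the lemma locally one upgrades to the global statement exactly as stated: any minimal geodesic $\gamma:[0,1]\to X$ with endpoints in $X\backslash D$ and touching $D$ would, by compactness of $D$ and a covering argument, contain such a local touching configuration, so $\gamma([0,1])\subset X\backslash D$. I would also remark that the same argument gives geodesic convexity of $X\backslash D$ when $D$ has simple normal crossings with all cone angles in $(0,2\pi)$, which is the form actually needed later for toric conical metrics via Lemma~\ref{curvature bound}.
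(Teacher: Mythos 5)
Your overall strategy (reduce to a local model near a touching point, use the uniformization $w_1=z_1^\alpha/\alpha$ and the bi-Lipschitz approximation of $g$ by $g_\alpha$) starts out parallel to the paper's, but the mechanism you propose for the contradiction does not work, and the gap is not merely technical. The function $\psi(t)=r(\gamma(t))^2=|w_1(\gamma(t))|^2$ is indeed convex along geodesics of the model metric away from the vertex, but a nonnegative, strictly convex function can perfectly well vanish at an interior minimum ($t\mapsto t^2$ does exactly that), so your concluding sentence --- that strict convexity of $\psi$ plus $\psi(t_0)=0$ at an interior point is ``impossible'' --- is false. More fundamentally, convexity of the (squared) distance to $D$ cannot be the reason geodesics avoid $D$: it holds verbatim for $\alpha=1$ (flat $\mathbb{C}^n$, $D$ a hyperplane), where minimizing geodesics cross $D$ freely. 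The same objection defeats your alternative route via ``convexity of the distance to a totally geodesic submanifold'': straight lines in $\mathbb{R}^n$ cross totally geodesic hyperplanes. The decisive input from $\alpha<1$ is not a Hessian inequality but an \emph{angle deficit at the vertex}: in the $w$-coordinate the transverse cross-section is a flat cone of total angle $2\pi\alpha<2\pi$, so the incoming and outgoing directions of $\gamma$ at a touching point meet at an angle $\theta\le\pi\alpha<\pi$, and one cuts the corner. You gesture at a short-cut curve at distance $\sim\delta$ from $D$, but you never quantify why it is shorter; that quantification \emph{is} the proof, and it is exactly what your convexity framework omits.

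For comparison, the paper's argument is purely metric and needs no Christoffel or Hessian control (which is important, since for general, non-toric cone metrics with irrational $\alpha$ there is no smooth uniformizing cover across $D$, only the estimate $(1-\varepsilon(r))g_\alpha\le g\le(1+\varepsilon(r))g_\alpha$). It first reduces to showing that a minimal geodesic through a point of $D$ must locally lie in $D$; then, for points $\tilde p,\tilde q$ on $\gamma$ off $D$ near the touching point with $|w(\tilde p)|=|w(\tilde q)|$, it compares the broken path of length at least $(1-\varepsilon(r))(|w(\tilde p)|+|w(\tilde q)|)$ with the straight chord in $w$-coordinates of length at most $(1+\varepsilon(r))\sqrt{|w(\tilde p)|^2+|w(\tilde q)|^2-2\cos\theta\,|w(\tilde p)||w(\tilde q)|}$; since $\theta<\pi$ the ratio is $\sqrt{(1-\cos\theta)/2}<1$, beating the distortion $(1+\varepsilon(r))/(1-\varepsilon(r))\to1$ for $r$ small. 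To repair your proof you would need to replace the convexity step by this corner-cutting estimate (or an equivalent first-variation computation at the corner showing the broken geodesic is not locally minimizing because the angle is $<\pi$).
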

\begin{proof}
We only need to show that if $\gamma: (-\delta,\delta)\to X$ is a minimal geodesic with $\gamma(0)=s\in D$, then there exist an $\varepsilon>0$ such that $\gamma|_{(-\varepsilon, \varepsilon)}\subset D$, from which it follows that $\gamma|_{(-\delta,\delta)}\subset D$ by the closedness of $D$. Recall that in appropriate coordinates $\{z_1,\cdots, z_n\}$ centered at a point in $D$, the conic metric $g$ can be approximated by the model flat metric $g_\alpha$ in the sense that there exists a constant $\varepsilon(r)>0$ with $\varepsilon(r)\to 0$ as $r\to 0$ such that
\begin{equation*}
(1-\varepsilon(r))g_\alpha(z)\le g(z)\le (1+\varepsilon(r)) g_\alpha(z), \quad \text{on } \{|z|\le r, z_1\neq 0\}.
\end{equation*}
For two points $p,q$ outside $D$ with $|z(p)|, |z(q)|\le r$, assume the minimal geodesic $\gamma$ connecting $p, q$ passes through the origin $0\in D$. Viewed in terms of the coordinates $(w_1,z_2,\cdots,z_n)$ as above, in which $g_\alpha$ is flat and the two parts $\gamma|_{[p,0]}$ and $\gamma|_{[0,q]}$ intersect at $0$ with angle $\theta <\pi$. We know that $L_g(\gamma)\ge (1-\varepsilon(r))L_{g_\alpha}(\gamma)\ge (1-\varepsilon(r))(|w(p)| + |w(q)|)$, where $|w(p)|^2 = {|w_1(p)|^2+|z_2(p)|^2 + \ldots + |z_n(p)|^2}$ and similar for $|w(q)|$. On the other hand, the straight line $l_{p,q}$ (in the coordinates $\{w_1,z_2,\ldots,z_n\}$) connecting $p$ and $q$ has length $L_g(l_{p,q})\le (1+\varepsilon(r))L_{g_\alpha}{(l_{p,q})}= (1+\varepsilon(r)) \sqrt{|w(p)|^2+|w(q)|^2 - 2\cos \theta_{p,q} |w(p)||w(q)|}$. Since $\gamma$ is a minimal geodesic connecting $p$ and $q$, we have $L_g(\gamma)\le L_g(l_{p,q})$. Hence we get
\begin{equation}\label{eqn:cont}
(1-\varepsilon(r))(|w(p)|+ |w(q)|)\le (1+\varepsilon(r)) \sqrt{|w(p)|^2+|w(q)|^2 - 2\cos \theta_{p,q} |w(p)||w(q)|}.
\end{equation}
Note that \eqref{eqn:cont} also holds for any points $\tilde p\in \gamma|_{[p,0)}$ and $q\in \gamma|_{(0,q]}$, since $\gamma$ is also a minimal geodesic connecting $\tilde p$ and $\tilde q$. Take $\tilde p, \tilde q$ sufficiently close to $0$ such that $|w(\tilde p)| = |w(\tilde q)|$, then
\begin{equation*}
\frac{ \sqrt{|w(\tilde p)|^2+|w(\tilde q)|^2 - 2\cos \theta_{\tilde{p}, \tilde{q}} |w(\tilde p)||w(\tilde q)|}}{|w(\tilde p)|+|w(\tilde q)|}=\sqrt{ \frac{1-\cos\theta_{\tilde{p}, \tilde{q}}}{2}}<1,
\end{equation*}
while
\begin{equation*}
\frac{1-\varepsilon(r)}{1+\varepsilon(r)}\to 1,\quad\text{as } r\to 0.
\end{equation*}
Thus we would get a contradiction to \eqref{eqn:cont} if $r<<1$, that is, $\gamma$ cannot intersect with $D$.

\end{proof}
\begin{remark}\label{rem:tgc}
If $(X,D)$ is a toric manifold with normal crossing toric divisor $D=\sum_i D_i$, and $g$ is a smooth toric cone metric with angle $2\pi\alpha_i\in (0,2\pi)$ along each component $D_i$. Since around any point $p\in Supp D$, $g$ can be extended smoothly to a smooth metric in the $\alpha$-covering (see section \ref{sec:toric}), we don't need to approximate cone metric by the model metric as in Lemma \ref{lemma:geod}, indeed,  similar method as in the orbifold case \cite{Bo} can be applied to  give that $(X\backslash Supp D,g)$ is geodesic convex.
\end{remark}

\subsubsection{Volume comparison}
Assumptions as above, observe that
\begin{equation*}
\mathrm{Vol}_g(D) = 0.
\end{equation*}
To see this, we can take a tubular neighborhood of $D$, say, locally $D\subset \{z:|z_1|\le \varepsilon\}=:\Sigma_\varepsilon$. Since the metric $g$ is equivalent to $g_\alpha$, then
\begin{equation*}
\mathrm{Vol}(D)\le \mathrm{Vol}(\Sigma_\varepsilon)=\int_{\Sigma_\varepsilon} {\det g}\le C \int^\varepsilon_0r^{2\alpha-2}rdr\le C \varepsilon ^{2\alpha}\to 0,\quad \text{as }\varepsilon\to 0.
\end{equation*}
Hence we can define the volume of a set $A\subset X$ to be
\begin{equation*}
\mathrm{Vol}(A) := \int_{A\cap X\backslash D} \omega^n,
\end{equation*}
where $\omega$ is the K\"ahler form associated to $g$.

For a constant $k$, let $S^m_k$ denote the simply-connected $m$-dimensional space form of constant curvature $k$, $V_k(r)$ the volume of geodesic ball in $S^m_k$ with radius $r$.

\begin{theorem}[Relative Volume Comparison]
Let $(X,D)$ be a compact complex manifold with smooth divisor $D$, $g$ be a smooth cone metric with cone angle $2\pi\alpha\in  (0, 2\pi)$ along $D$. Suppose $\mathrm{Ric}(g)\ge (2n-1)k$ in $X\backslash D$ for some constant $k$. Let $p, x\in X$ be fixed points, $\rho = d(p,x)$, $V(p,r) = \mathrm{Vol}_g(B(p,r))$. Then we have the following:
\begin{enumerate}
\item\label{item:cp1} The function $r\mapsto \frac{V(p,r)}{V_k(r)}$ is non-increasing and if $p\in D$ then $\frac{V(p,r)}{V_k(r)}\to \alpha$ as $r\to 0^+$.
\item\label{item:cp2} For $r_1\le r_2\le r_3$,
\begin{equation*}
\frac{V(p,r_3) - V(p,r_2)}{V_k(r_3) - V_k(r_2)}\le \frac{V(p,r_1)}{V_k(r_1)}.
\end{equation*}
\item\label{item:cp3} For $r_1\le r_2$,
\begin{equation*}
\frac{V(p,r_2)}{V_{k}(\rho+r_2)}\le \frac{V(x,r_1)}{V_k(r_1)}.
\end{equation*}
\item\label{item:cp4} If $r_2+r\le \rho$, then
\begin{equation*}
\frac{V(p,r_2) V_k(r)}{V_k(\rho+r_2) - V_k(\rho-r_2)}\le V(x,r).
\end{equation*}
\item \label{item:cp5} Let $p\in X\setminus D$, $A$ be a star-convex set centered at $p$, then for any $r_1\le r_2$, we have
\begin{equation*}
\frac{V(B(p,r_2\cap A)) - V(B(p,r_2)\cap A)}{V_k(r_2) - V_k(r_1)}\le \frac{V(\partial B(p,r_1))}{V_k(\partial B_{r_1}^k)}.
\end{equation*}
\end{enumerate}
\end{theorem}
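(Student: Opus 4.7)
The plan is to adapt the classical Bishop--Gromov relative volume comparison to the conic setting, using the geodesic convexity of Lemma~\ref{lemma:geod} to reduce everything to the smooth Riemannian geometry of $(X\setminus D,g)$, and then handling basepoints $p\in D$ by approximation together with a local model computation of the limiting density.

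First I establish (1)--(5) when $p\in X\setminus D$. On the open Riemannian manifold $(X\setminus D,g)$ the metric is smooth with $\mathrm{Ric}\ge (2n-1)k$, so the usual radial Jacobi/Riccati apparatus applies. For each $\theta\in S^{2n-1}\subset T_pX$ let $r(\theta)>0$ be the supremum of $s$ such that $\gamma_\theta|_{[0,s]}$ is a minimizing segment; by Lemma~\ref{lemma:geod}, we have $\gamma_\theta([0,r(\theta)))\subset X\setminus D$, so the comparison is carried out entirely in the smooth region. The Jacobi field estimate then gives $\partial_s\log\bigl(J(s,\theta)/J_k(s)\bigr)\le 0$ on $[0,r(\theta))$ together with $J(s,\theta)/J_k(s)\to 1$ as $s\to 0^+$. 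Setting $J(s,\theta):=0$ for $s\ge r(\theta)$ and invoking $\mathrm{Vol}_g(D)=0$, we have
\[
V(p,r)=\int_{S^{2n-1}}\!\int_0^{r} J(s,\theta)\,ds\,d\theta,
\]
and (1)--(4) follow from the pointwise monotonicity of $J(s,\theta)/J_k(s)$ exactly as in Gromov's classical proof. For (5), one restricts the angular integration to the open set of $\theta\in S^{2n-1}$ whose geodesic stays in the star-convex set $A$ and repeats the same argument.

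It remains to extend (1)--(4) to $p\in D$. I approximate $p$ by $p_j\in X\setminus D$ with $p_j\to p$; the map $q\mapsto V(q,r)$ is continuous on $(X,d_g)$ by Hausdorff continuity of the metric balls together with $\mathrm{Vol}_g(D)=0$, so the comparison inequalities for $p_j$ pass to the limit. The new piece is the identification $\lim_{r\to 0^+}V(p,r)/V_k(r)=\alpha$ for $p\in D$, which I read off the local model. In coordinates $(z_1,\dots,z_n)$ centered at $p$ with $D=\{z_1=0\}$, the bracketing $(1-\varepsilon(r))g_\alpha\le g\le(1+\varepsilon(r))g_\alpha$ displayed in the proof of Lemma~\ref{lemma:geod} reduces the computation to the flat model $g_\alpha$. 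Passing to the branched-cover variable $w_1=z_1^\alpha/\alpha$ converts $g_\alpha$ into the standard flat metric on $\mathbb{C}^n$, while a fundamental domain corresponds to an angular sector of total angle $2\pi\alpha$ in $w_1$. Hence $\mathrm{Vol}_{g_\alpha}(B(p,r))=\alpha\cdot\frac{\pi^n}{n!}\,r^{2n}$, and comparing with $V_k(r)=\frac{\pi^n}{n!}\,r^{2n}\bigl(1+O(r^2)\bigr)$ yields the claim after sending $\varepsilon(r)\to 0$.

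The main obstacle I anticipate is ensuring that the Jacobi field/Riccati argument is not spoiled by geodesics approaching $D$ when $p\in X\setminus D$. Lemma~\ref{lemma:geod} is precisely what handles this: within the minimizing range $s<r(\theta)$ the geodesic stays in the smooth region, so the pointwise Riccati inequality is valid; past the cut time one zeroes out $J$, just as in the complete smooth case. A secondary technicality is the continuity of $V(\cdot,r)$ at points of $D$, which reduces to Hausdorff continuity of balls plus the vanishing of $\mathrm{Vol}_g(D)$. Once these two points are in place, (2)--(5) are standard consequences of the pointwise monotonicity of $J(s,\theta)/J_k(s)$, obtained by the same manipulations as in the classical setting.
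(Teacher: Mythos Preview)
Your proposal is correct and follows essentially the same route as the paper: reduce to the smooth region via the geodesic convexity of Lemma~\ref{lemma:geod} (the paper phrases this as declaring $D$ to lie in the cut locus of $p$), apply the classical Bishop--Gromov comparison there, handle $p\in D$ by approximation from $X\setminus D$, and read off the limit $\alpha$ from the flat cone model in the $(w_1,z_2,\dots,z_n)$ coordinates. The only cosmetic difference is that the paper states the monotonicity at the level of the area ratio $\mathrm{Vol}(\partial B(p,r)\cap A)/V_k(\partial B^k_r)$ and cites \cite{CGT}, whereas you work with the pointwise Jacobian ratio $J(s,\theta)/J_k(s)$; these are equivalent formulations of the same argument.
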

\begin{proof}
For any $p\in D$, we can choose a sequence $\{p_i\}\subset X\backslash D$ such that $d(p,p_i)\to 0$, and it's clear that for any $r>0$, $V(p_i,r)\to V(p,r)$. Hence, we only need to prove the above inequalities for $p\in X\backslash D$. Note that the cone metric $g$ is smooth in $X\backslash D$(being incomplete), we can define the cut-loci of $p$ to be the same as in the smooth case, say, a point $q\not\in D$ is at the cut-loci of $p$, if either $q$ is a conjugate point of $p$, or there exist at least two different minimal geodesics connecting $p$ and $q$. Moreover, in this cone metric case, we also define $D$ to be contained in the cut-loci of $p$ because of Lemma \ref{lemma:geod}. Denote $CL(p)$ to be the cut-loci of $p$. Clearly $CL(p)$ has measure $0$. For any star-convex set $A\subset X$ (in the sense that any point $q\in A)$ can be joined to $p$ with the whole geodesic contained in $A$, in particular, $X\backslash CL(p)$ is a star-convex set).
In $A\cap X\setminus CL(p)$, the metric $g$ is smooth and the distance function $d(\cdot):=d(\cdot, p)$ is smooth except at $p$, and the metric has Ricci curvature bounded below by $(2n-1)k$. By relative comparison in Riemannian geometry, we can see that the function
\begin{equation*}
r\mapsto \frac{\mathrm{Vol}(\partial B(p,r)\cap A)}{V_k(\partial B^k_r)}=:f(r)
\end{equation*}
is non-increasing, where $B^k_r$ is a geodesic ball in $S^m_k$ with radius $r$ and $m=2n$. Then the proof of the items except the second part in (1) goes similar as that in \cite{CGT} by taking $A= X\backslash CL(p)$.

For the second part of (\ref{item:cp1}), viewed in the coordinates $\{w_1,z_2,\ldots,z_n\}$, the metric $g$ can be viewed as a metric with Ricci curvature bounded below by $(2n-1)k$ in $S\times \mathbb C^{n-1}$, where $S$ is a fan-shape space with angle $2\pi\alpha$ in  $\mathbb R^2\cong \mathbb C$. Precisely, $\arg w_1 = \alpha \arg z_1$, thus  for $p\in D$
\begin{equation*}
\frac{V(B(p,r))}{V_k(r)}\to \alpha,
\end{equation*}
as $r\to 0$, which can be seen by writing the volume in terms of polar coordinates.

\end{proof}
If the cone metric satisfies $\mathrm{Ric}(g)\ge (2n-1)k>0$, then we have the diameter bound.
\begin{theorem}[Myers Theorem]
Let $(X, D, g)$ be a cone metric with cone angle $2\pi\alpha\in (0, 2\pi)$ along $D$, and $\mathrm{Ric}(g)\ge (2n-1)k>0$, then the diameter of $(X,g)$ is bounded above by $\pi/\sqrt{k}$.
\end{theorem}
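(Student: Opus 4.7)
\medskip

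\textbf{Proof proposal.} The plan is to reduce the statement to the classical Bonnet–Myers second variation argument on the smooth open Riemannian manifold $(X\setminus D, g)$, exploiting the geodesic convexity result of Lemma \ref{lemma:geod} (and Remark \ref{rem:tgc} in the toric case) to guarantee that minimizing curves avoid the singular set.

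First I would reduce to showing $d(p,q)\le \pi/\sqrt{k}$ for $p,q\in X\setminus D$. Since $X\setminus D$ is open and dense in $X$, any pair of points in $X$ can be approximated by sequences in $X\setminus D$, and the continuity of the distance function (which follows from the length-space structure together with $\mathrm{Vol}_g(D)=0$) then extends the bound to all of $X$. Fix now $p,q\in X\setminus D$ and set $L=d(p,q)$. Because $X$ is compact and $g$ induces a length metric, there exists a minimizing unit-speed curve $\gamma\colon[0,L]\to X$ joining $p$ to $q$. Every subarc of $\gamma$ is itself minimizing between its endpoints, so applying Lemma \ref{lemma:geod} to each such subarc whose endpoints lie in $X\setminus D$ forces the whole image of $\gamma$ to lie in $X\setminus D$. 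On this open set $g$ is a smooth Riemannian metric, hence $\gamma$ is a smooth geodesic in the classical sense.

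Now I would run the standard second variation argument on this smooth geodesic. Choose a parallel orthonormal frame $\{E_1,\ldots,E_{2n-1}\}$ along $\gamma$ perpendicular to $\dot\gamma$, and form the variation fields $V_i(s)=\sin(\pi s/L)E_i$, which vanish at both endpoints. The standard second variation of arc length gives
\begin{equation*}
I(V_i,V_i)=\int_0^L\bigl(|\nabla_{\dot\gamma}V_i|^2-R(V_i,\dot\gamma,V_i,\dot\gamma)\bigr)\,ds,
\end{equation*}
and summing over $i$, using parallelism of the frame together with the identity $\sum_i R(E_i,\dot\gamma,E_i,\dot\gamma)=\mathrm{Ric}(\dot\gamma,\dot\gamma)\ge(2n-1)k$ and $\int_0^L\cos^2(\pi s/L)\,ds=\int_0^L\sin^2(\pi s/L)\,ds=L/2$, I obtain
\begin{equation*}
\sum_{i=1}^{2n-1}I(V_i,V_i)\le (2n-1)\cdot\frac{L}{2}\cdot\Bigl(\frac{\pi^2}{L^2}-k\Bigr).
\end{equation*}
If $L>\pi/\sqrt{k}$ the right hand side is strictly negative, so some $I(V_i,V_i)<0$, contradicting the minimality of $\gamma$. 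Hence $L\le \pi/\sqrt{k}$.

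The only real obstacle is the worry that the minimizing curve could enter the singular divisor $D$, where the curvature tensor and the second variation formula are not classically available; Lemma \ref{lemma:geod} is precisely what rules this out, after which the argument reduces to the textbook case. A minor point I would address carefully is the regularity of the minimizer: once it is known to lie in the smooth open Riemannian region, the classical regularity theory for length-minimizing curves upgrades it to a smooth geodesic, legitimizing the second variation computation above.
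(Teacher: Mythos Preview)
Your proposal is correct and follows essentially the same approach as the paper: use geodesic convexity (Lemma \ref{lemma:geod}) to ensure that minimizing geodesics between points of $X\setminus D$ avoid $D$, apply the classical Bonnet--Myers second variation argument on the smooth part, and then extend the diameter bound to all of $X$ by approximating points of $D$ by points in $X\setminus D$. The paper simply invokes ``the same proof as that in the classic Myers Theorem'' whereas you have spelled out the second variation computation, but the logic is identical.
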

\begin{proof}
Note that $g$ is smooth in $X\backslash D$ and its Ricci curvature is bounded below by $(2n-1)k>0$, and any two points $p,q\in X\backslash D$ can be joined by a minimal geomdesic which has no intersection with $D$, thus the same proof as that in the classic Myers Theorem applies to this case, and we have $\mathrm{diam}(X\backslash D,g)\le \pi/\sqrt{k}$. For any points $p, q\in D$, there exist $\{p_i\}, \{q_i\}\subset X\backslash D$ such that $d(p,p_i)\to 0$ and $d(q,q_i)\to 0$. Then triangle inequality implies $d(p,q)\le \lim d(p_i,q_i) + d(p,p_i) + d(q,q_i)\le \pi/\sqrt{k}$. Thus $\mathrm{diam}(X,g)\le \pi/\sqrt{k}$.
\end{proof}
Next we state a lemma due to Gromov \cite{Gr}.

\begin{lemma} \label{grom}
Suppose $g$ is a cone metric on $(X,D)$ with cone angle $2\pi\alpha\in (0,2\pi)$, and $\mathrm{Ric}(g)\ge (2n-1)k>0$. Let $E$ be a small tubular neighborhood of $D$. Let $\varepsilon>0$ and $p_1$, $p_2$ be two points in $X\backslash E$ such that $B(p_i,\varepsilon)\cap E = \emptyset$ ($i=1,2$).If for any point $p\in B(p_2,\varepsilon)$, there is a minimal geodesic connecting $p_1$ and $p$ and intersecting with $\partial E$, then there exists a constant $C=C(n,\varepsilon,k)$ such that
\begin{equation*}
\mathrm{Vol}(B(p_2,\varepsilon))\le C \mathrm{Vol}(\partial E).
\end{equation*}
\label{gromov's lemma}
\end{lemma}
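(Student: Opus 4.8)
The plan is to run Gromov's ``shadow'' argument entirely inside the smooth incomplete region $X\setminus D$. By Lemma \ref{lemma:geod} every minimal geodesic between points of $X\setminus D$ stays in $X\setminus D$, where $g$ is smooth with $\mathrm{Ric}(g)\ge (2n-1)k$; moreover, since $E$ is a neighbourhood of $D$, after shrinking $\varepsilon$ slightly I may assume $\overline E$ is disjoint from $\overline{B(p_1,\varepsilon)}\cup\overline{B(p_2,\varepsilon)}$. Set $m=2n$ and fix geodesic polar coordinates $(t,v)$ at $p_1$ on $X\setminus\mathrm{Cut}(p_1)$, writing the volume element as $\mathcal A(t,v)\,dt\,d\sigma(v)$, so that $\mathcal A(t,v)\,d\sigma(v)$ is the area element of the geodesic sphere $\partial B(p_1,t)$; recall $\mathrm{Cut}(p_1)$ is null, $D\subset\mathrm{Cut}(p_1)$, and by the Myers theorem above $\mathrm{diam}(X,g)\le\pi/\sqrt k$.

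It suffices to estimate $\mathrm{Vol}(B(p_2,\varepsilon/3))$, since the relative volume comparison above gives $\mathrm{Vol}(B(p_2,\varepsilon))\le \tfrac{V_k(\varepsilon)}{V_k(\varepsilon/3)}\mathrm{Vol}(B(p_2,\varepsilon/3))$. Let $\Omega$ be the set of initial directions (in the unit sphere of $T_{p_1}X$) of the minimal geodesics provided by the hypothesis for the points of $B(p_2,\varepsilon/3)$; for $v\in\Omega$ let $c_v(t)=\exp_{p_1}(tv)$ be such a geodesic, with endpoint $p(v)=c_v(t(v))\in B(p_2,\varepsilon/3)$, and let $\ell(v)\le t(v)$ be the first radius at which $c_v$ meets $\overline E$. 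The factor $\varepsilon/3$ forces $\ell(v)$ to be attained before $c_v$ reaches $B(p_2,\varepsilon/3)$: if $c_v(t)\in B(p_2,\varepsilon/3)$ with $t<\ell(v)$, then $\ell(v)-t\le t(v)-t=d(c_v(t),p(v))\le 2\varepsilon/3$, so $c_v(\ell(v))\in B(p_2,\varepsilon)\cap\overline E=\emptyset$, a contradiction. Hence $\varepsilon\le\ell(v)\le t$ for every $t$ with $c_v(t)\in B(p_2,\varepsilon/3)$; such $t$ fill a set of length $\le 2\varepsilon/3$ contained in $[0,\mathrm{diam}]$, and Bishop--Gromov monotonicity together with the Myers bound give $\mathcal A(t,v)\le\Lambda(n,k,\varepsilon)\,\mathcal A(\ell(v),v)$ there. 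Integrating in polar coordinates,
\begin{equation*}
\mathrm{Vol}(B(p_2,\varepsilon/3))\ \le\ \tfrac{2\varepsilon}{3}\,\Lambda(n,k,\varepsilon)\int_{\Omega}\mathcal A(\ell(v),v)\,d\sigma(v).
\end{equation*}

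It remains to bound $\int_\Omega\mathcal A(\ell(v),v)\,d\sigma(v)$ by $\mathrm{Vol}(\partial E)$, which is the heart of the matter. I would use the ``radial graph'' map $\Phi:\Omega\to\partial E$, $\Phi(v)=\exp_{p_1}(\ell(v)v)$. In polar coordinates $\Phi(\Omega)$ is the graph of the radial function $\ell$ over $\Omega$, so a direct computation gives $|\mathrm{Jac}\,\Phi(v)|\ge\mathcal A(\ell(v),v)$, the extra radial component coming from $d\ell$ only increasing the Jacobian. Moreover $\Phi$ is injective: if $\Phi(v)=\Phi(v')=y$, then $c_v|_{[0,\ell(v)]}$ and $c_{v'}|_{[0,\ell(v')]}$ are minimal geodesics from $p_1$ to $y$, each of which extends minimally past $y$ (by at least $2\varepsilon/3$, as the intersection with $\partial E$ occurs a definite distance before the target ball); concatenating one with the initial extension of the other produces a distance-minimizing path, hence a smooth geodesic, which is impossible if it has a corner at $y$, so the two geodesics agree in direction at $y$ and $v=v'$. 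The area formula then gives $\int_\Omega\mathcal A(\ell(v),v)\,d\sigma(v)\le\int_\Omega|\mathrm{Jac}\,\Phi|\,d\sigma(v)=\mathrm{Vol}(\Phi(\Omega))\le\mathrm{Vol}(\partial E)$, and combining with the preceding displays yields $\mathrm{Vol}(B(p_2,\varepsilon))\le C(n,k,\varepsilon)\,\mathrm{Vol}(\partial E)$.

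The main obstacle is precisely this final step: getting a constant that depends only on $n,k,\varepsilon$ and not on the a priori uncontrolled extrinsic geometry of $\partial E$. A naive argument routed through the volume of a tubular collar of $\partial E$ would make the constant depend on the second fundamental form of $\partial E$; one must instead compare directly with the $(2n-1)$-volume of $\partial E$ through the area formula for the radial graph map $\Phi$, using the Jacobian lower bound $|\mathrm{Jac}\,\Phi|\ge\mathcal A(\ell(v),v)$ and the injectivity of $\Phi$. The minor technical point of ensuring that the $\partial E$-crossing precedes arrival at the target ball, so that Bishop--Gromov monotonicity applies in the correct direction, is handled by the $\varepsilon/3$ device above.
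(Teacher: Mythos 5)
Your proposal is correct and follows essentially the same route as the paper: Gromov's shadow argument, projecting $B(p_2,\varepsilon)$ radially from $p_1$ onto the first crossing points with $\partial E$ and comparing area elements along radial geodesics via Bishop--Gromov monotonicity, with geodesic convexity (Lemma \ref{lemma:geod}) keeping everything in the smooth locus and Myers' theorem bounding the diameter. The paper compresses all of this into a one-line appeal to item (5) of its relative volume comparison theorem, whereas you unpack the same mechanism explicitly (the $\varepsilon/3$ device, the graph map $\Phi$, its Jacobian bound and injectivity); the substance is identical.
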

\begin{proof}
We proceed the proof following Gromov \cite{Gr}. For any $p\in B(p_2,\varepsilon)$, let $\gamma_p$ be a minimal geodesic connecting $p_1$ and $p$, by assumption, $\gamma_p\cap \partial E\neq \emptyset$, denoted by  $\tilde p\in \gamma\cap \partial E$ (if there are more than one intersection points, we take the one closest to $p$). Set $d_1(\gamma_p):= d(p_1,\tilde p)$ and $d_2(\gamma_p): =d(p,\tilde p)$. Then clearly, $\varepsilon<d_i(\gamma_p)\le D$, $i=1,2$, and here $D$ is the diameter of $(X,g)$, which is bounded above by $\pi/\sqrt{k}$.  Applying the relative volume comparison theorem (the $5^{\text{th}}$- item), we have
\begin{equation*}
\frac{\mathrm{Vol}(B(p_2,\varepsilon))}{\mathrm{Vol}(\partial E)}\le \sup_\gamma \frac{V_k(d_1(\gamma) + d_2(\gamma)) - V_k(d_1(\gamma))}{V_k(\partial B_k(d_1(\gamma)))}\le \frac{V_k(D) - V_k(\varepsilon)}{V_k(\partial B_k(\varepsilon))}=: C(n,k,\varepsilon).
\end{equation*}
\end{proof}
\begin{remark}
Although we only state the volume comparison theorem, Myers theorem and Gromov's lemma, for smooth divisor case, they can be easily generalized for smooth toric conical K\"ahler metrics.  We will need the geodesic convexity as in Remark \ref{rem:tgc}.
\end{remark}

\section{Toric K\"ahler-Ricci solitons with conical singularities}

In this section, we will prove Theorem \ref{theorem-1} and Theorem \ref{theorem-2}. The key ingredient is to prove a  family version of $C^0$ estimates in \cite{WZ} which will help us prove Theorem \ref{theorem-3} for a possibly degenerate family of toric manifolds.

\subsection{Setting up the continuity method}

\noindent For the rest of this section, $X$ will be a toric manifold with a fixed polarization by an ample line bundle $L$. We borrow notation from the last section. In particular, recall that the polytope given by $L$ is fixed to be
\begin{equation*}
P = \{x \in \mathbb{R}^n ~|~ l_j(x) = v_j \cdot x + \lambda_j > 0\}.
\end{equation*}

Also, we once and for all fix a reference metric in $c_1(L)$ by setting $\homega = \ddbar\hat \varphi$ on $(\mathbb{C}^*)^n$ where,

\begin{equation}
\hat u(x) = \sum_{j=1}^{N}{\frac{l_j(x) \log {(l_j(x))}}{\be_j}}
\end{equation}

\noindent and $\hvarphi$ is the Legendre transform of $\hat u$.  Then, by the discussion in the last section, $\homega$ is a global toric smooth conical metric with angles $2\pi\be_j$ along the divisor $D_j$.

Our aim in this section is to solve the following conical soliton equation
\begin{equation*}\tag{*}
Ric(\omega) = \al\omega + \mathcal{L}_\xi\omega + [D],
\label{soliton eq}
\end{equation*}
where $\alpha>0$, $\omega$ is a smooth toric conical K\"ahler metric, $\xi$ is a holomorphic toric vector field on $X$ and $D$ is an effective toric $\mathbb{R}$-divisor. This is a generalization of Wang-Zhu \cite{WZ} in the case of smooth Fano manifolds. We will prove our estimates in the framework of \cite{WZ} combined with some techniques of \cite{D2}. On the open part $(\mathbb{C}^*)^n$, we can write $\omega = \ddbar\varphi$. $\xi$ being holomorphic then implies that $\mathcal{L}_\xi\omega = \partial\bar\partial\xi(\varphi)$. Since $\xi$ is also toric , it is generated by the standard vector fields $\{z_i\partial/\partial z_i\}$. Consequently, there exists a vector $\vec{c} \in \mathbb{R}^n$ such that

\begin{equation*}
\xi(\phi) = \sum_{i=1}^{n}{c_i\frac{\partial \varphi}{\partial \rho_i}}.
\end{equation*}

Since on the open part one does not see the divisor, the soliton equation can be re-written as a real Monge-Ampere equation -

\begin{equation}
\det{(\nabla^2 \varphi)} = e^{-\al \varphi - c\cdot \nabla\varphi + \al\tau\cdot\rho}
\label{soliton real m.a}
\end{equation}

\noindent for some $\tau \in \mathbb{R}^n$. Here the linear part shows up when one gets rid of the $\partial\bar\partial$ and in some sense corresponds to the divisor and controls the blow up of the metric as is seen below.

\begin{lemma}\label{sani}
If there exists a solution of \eqref{soliton real m.a} then $\tau$ and the vector $\vec{c}$ must satisfy

\begin{equation}
\tau = \frac{\int_{P}{xe^{c\cdot x} \,dx}}{\int_{P}{e^{c\cdot x} \,dx}} .
\end{equation}

\noindent Moreover, the divisor $D$ in $\eqref{soliton eq}$ is given by

\begin{equation*}
D = \sum_{j}{(1- \al l_j(\tau)) D_j}
\end{equation*}

\noindent and so the cone angle along each $D_j$ is $2\pi\be_j$ where $\be_j = \al l_j(\tau)$.
\label{cone angles}
\end{lemma}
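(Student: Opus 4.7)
The plan is to derive the moment identity for $\tau$ by a single integration by parts on $\RR^n$, and then to read off the cone angles from the asymptotic behavior of the symplectic potential $u$ near the boundary of $P$. First, I would exploit the identity
$$(x_i - \tau_i)\, e^{-\al\varphi + \al\tau\cdot\rho} \;=\; -\tfrac{1}{\al}\,\partial_{\rho_i}\!\left(e^{-\al\varphi + \al\tau\cdot\rho}\right),$$
which follows at once from $\partial_{\rho_i}\varphi = x_i$. Rewriting \eqref{soliton real m.a} in the form $\det(\nabla^2\varphi)\, e^{c\cdot x} = e^{-\al\varphi + \al\tau\cdot\rho}$, multiplying by $(x_i - \tau_i)$, and integrating over $\RR^n$ produces
$$\int_{\RR^n} (x_i - \tau_i)\det(\nabla^2\varphi)\, e^{c\cdot x}\, d\rho \;=\; -\frac{1}{\al}\int_{\RR^n} \partial_{\rho_i}\!\left(e^{-\al\varphi + \al\tau\cdot\rho}\right) d\rho.$$
The change of variables $x = \nabla\varphi(\rho)$, whose Jacobian is $\det(\nabla^2\varphi)$ and whose image is $P$, identifies the left-hand side with $\int_P (x_i - \tau_i)\, e^{c\cdot x}\, dx$.

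Second, I would verify that the right-hand side, an integral of a total derivative, vanishes. Since $\varphi(\rho) = \sup_{x\in \overline{P}}(x\cdot\rho - u(x))$ grows at infinity as the support function of $P$, the exponent $-\al\varphi + \al\tau\cdot\rho$ tends to $-\infty$ in every direction provided $\tau$ lies in the interior of $P$—a property that will be verified a posteriori from the cone angle identification below. Granting the decay, the boundary contribution drops out and one obtains
$$\int_P (x-\tau)\,e^{c\cdot x}\,dx \;=\; 0,$$
which rearranges exactly to the stated formula for $\tau$.

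Third, I would pass to symplectic coordinates to identify the cone angles. Using $\varphi = x\cdot\rho - u(x)$ and $\det(\nabla^2\varphi)\det(\nabla^2 u) = 1$, equation \eqref{soliton real m.a} transforms into
$$\det(\nabla^2 u) \;=\; e^{-\al u + c\cdot x + \al(x-\tau)\cdot \nabla u(x)} \qquad \text{on } P.$$
On a smooth boundary face $\{l_j = 0\}$ the conical Guillemin expansion \eqref{symplectic potential} yields $\nabla u \sim \be_j^{-1} v_j \log l_j$ and $\det(\nabla^2 u) \sim C/l_j$, while $v_j\cdot x = -\lambda_j$ on the face forces $(x-\tau)\cdot v_j \to -l_j(\tau)$. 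The remaining exponential factors being bounded, matching the leading orders $l_j^{-1}$ and $l_j^{-\al l_j(\tau)/\be_j}$ forces $\be_j = \al l_j(\tau)$. Positivity of each $\be_j$ then places $\tau$ in $\mathrm{int}(P)$, closing the loop with the previous step, and the divisor in \eqref{soliton eq} reads $D = \sum_j(1-\be_j)D_j = \sum_j(1-\al l_j(\tau))D_j$.

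The main obstacle is the rigorous justification of the integration by parts in the first step: one must control the growth of $\varphi$ uniformly enough that the boundary integral in $\rho_i$ indeed vanishes. Once $\tau\in \mathrm{int}(P)$ is verified via the cone angle computation, this reduces to the standard properness of $\nabla\varphi\colon\RR^n \to P$ together with the Guillemin-type asymptotics for conical toric potentials.
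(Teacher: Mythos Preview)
Your argument is correct and follows essentially the same two-step strategy as the paper: integrate by parts on $\RR^n$ to obtain the barycenter identity, then match the orders of $l_j$ in the symplectic-potential form of the equation to read off $\be_j = \al l_j(\tau)$. The only noteworthy difference is in how the boundary term is disposed of: you argue via the linear growth of $\varphi$ and need $\tau\in\mathrm{int}(P)$, which you then recover \emph{a posteriori} from the cone-angle identification, whereas the paper simply invokes $\det(\nabla^2\varphi)\to 0$ as $|\rho|\to\infty$ (a direct consequence of the conical Guillemin boundary conditions, independent of $\tau$), so that $e^{-\al\varphi+\al\tau\cdot\rho}=\det(\nabla^2\varphi)\,e^{c\cdot\nabla\varphi}$ decays since $\nabla\varphi$ is bounded. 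This avoids the loop-closing step entirely; you may wish to adopt that justification to streamline the logic.
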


\begin{proof} Since $\det{(\nabla^2 \varphi)} \rightarrow 0$ as $|\rho| \rightarrow \infty$, for $\al > 0$,
\begin{align*}
0 &= \int_{\mathbb{R}^n}{\nabla(e^{-\al \varphi + \al\tau\cdot\rho})\,d\rho} \\
&= \al\tau \int_{P}{e^{c\cdot x} \,dx} - \al\int_{P}{xe^{c\cdot x} \,dx}.
\end{align*}

To compute the cone angles we consider the asymptotics at infinity. First, the equation for the symplectic potential $u$, the Legendre transform of $\varphi$, is given by

\begin{equation}
\det(\nabla^2 u) = e^{-\al u + \al(x - \tau ) \cdot\nabla u - c\cdot x } .
\label{symp. eqn.}
\end{equation}

By the conic Gullemein boundary conditions,

\begin{equation*}
u = \hat u + f(x) =  \sum_{j=1}^{N}{\frac{l_j(x) \log {(l_j(x))}}{\be_j}} + f(x)
\end{equation*}

\noindent for some $f \in C^{\infty}(\bar P)$. By direct computation it can be seen that

\begin{equation*}
\det(\nabla^2 u) = \frac{G(x)}{l_1(x) \dotsc l_N(x)}
\end{equation*}

\noindent for some non vanishing $G \in C^{\infty}(\bar P)$. On the other hand, once again using the formula for $u$, the order of $l_j$ on the right hand side of equation \eqref{symp. eqn.} can be seen to be $\al l_j(\tau)/\be_j$. Comparing the orders of $l_j(x)$ on both sides of the equation we conclude that $\be_j = \al l_j(\tau)$.

\end{proof}

Hence $D$ is effective if and only if $1-\alpha l_j(\tau)>0$ for all $j$ and in this case Lemma \ref{sani} implies that $\tau \in P$. Conversely, we have the following Lemma due to Wang-Zhu and Donaldson \cite{WZ}, \cite{D2}

\begin{lemma}
For each $\tau \in P$, there exists a unique vector $\vec{c} \in \mathbb{R}^n$ satisfying

\begin{equation}
\tau = \frac{\int_{P}{xe^{c\cdot x} \,dx}}{\int_{P}{e^{c\cdot x} \,dx}}.
\label{compatible}
\end{equation}
\end{lemma}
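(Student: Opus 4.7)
The plan is to recognize the right-hand side of \eqref{compatible} as the gradient of the log-partition function
\[
F(c) = \log \int_P e^{c\cdot x}\,dx,
\]
and then solve $\nabla F(c) = \tau$ by a convex variational problem. Differentiation under the integral sign is justified since $P$ is bounded, so $F\in C^\infty(\mathbb{R}^n)$ and a direct computation gives $\nabla F(c) = \frac{\int_P x e^{c\cdot x}dx}{\int_P e^{c\cdot x}dx}$. The Hessian $\mathrm{Hess}(F)(c)$ equals the covariance matrix of the coordinate functions under the probability measure $d\mu_c = Z_c^{-1}e^{c\cdot x}dx$ on $P$. Because $P$ is open and $n$-dimensional, there is no affine relation among the coordinates holding $\mu_c$-almost everywhere, so $\mathrm{Hess}(F)(c)$ is positive definite for every $c$. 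In particular $F$ is strictly convex, which gives the uniqueness of $c$ with $\nabla F(c)=\tau$.

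For existence, I would fix $\tau\in P$ and consider the strictly convex function $G(c) = F(c) - \tau\cdot c$. The goal is to show $G$ is coercive, so it attains a (unique) global minimum, at which $\nabla F = \tau$. Writing $c = tv$ with $v\in S^{n-1}$ and $t>0$, the bound
\[
F(tv) \geq t\,h_P(v) - C,\qquad h_P(v):=\sup_{x\in P}v\cdot x,
\]
follows from integrating $e^{tv\cdot x}$ over a small ball inside $P$ near the vertex at which $v\cdot x$ is maximized (Laplace-type lower bound; one may even obtain the sharper $F(tv)= t\,h_P(v) - n\log t + O(1)$, but only the linear leading term is needed here). Consequently
\[
G(tv) \geq t\bigl(h_P(v) - v\cdot\tau\bigr) - C.
\]
Since $\tau$ lies in the open polytope $P$, $h_P(v)-v\cdot\tau>0$ for every $v\in S^{n-1}$, and by continuity and compactness of $S^{n-1}$ this quantity is bounded below by some positive constant $\delta(\tau)$. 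Hence $G(c)\to\infty$ as $|c|\to\infty$.

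Combining strict convexity with coercivity, $G$ has a unique critical point $c=c(\tau)$ which is a global minimum, and $\nabla G(c(\tau))=0$ is precisely \eqref{compatible}. The main subtlety is the coercivity step: the estimate hinges on the strict inequality $h_P(v)>v\cdot\tau$ for all directions $v$, which is exactly the statement that $\tau$ lies in the interior of $P$, so the argument would fail (and indeed the claim would be false) were $\tau$ permitted on $\partial P$. Everything else is a routine consequence of properties of the Legendre transform of a log-integral of an exponential family.
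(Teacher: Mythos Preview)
Your argument is correct and is essentially the same as the paper's: both minimize a strictly convex, proper function whose critical point equation is \eqref{compatible}. The paper first translates $P$ so that $\tau=0$ and then minimizes $\int_P e^{c\cdot x}\,dx$ directly, whereas you minimize $G(c)=\log\int_P e^{c\cdot x}\,dx-\tau\cdot c$; these are the same problem up to the change of variables $x\mapsto x-\tau$ and the monotone transformation $\log$, and your coercivity step (using $h_P(v)-v\cdot\tau>0$ for $\tau$ interior) is exactly the paper's one-line remark that properness ``follows from $0$ being an interior point,'' only spelled out in more detail.
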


\begin{proof}
By translating the polytope by $\tau$, we can assume without loss of generality that $\tau = 0$. Consider the function

\begin{equation*}
F(\vec{c}) = \int_{P}{e^{c\cdot x} \,dx}.
\end{equation*}

Clearly this function is strictly convex as can be seen by differentiating it twice. It is also proper. This follows from $0$ being an interior point. Hence the function has a unique minimum $\vec{c}$. But then $\nabla F(\vec{c}) = 0$ which is precisely what we need.

\end{proof}

By the Cartan formula, for any K\"ahler metric $\omega$, $\mathcal{L}_{\xi}\omega = di_{\xi}{\omega} $. Since $\xi$ is holomorphic, clearly $\bar\partial i_{\xi}\omega = 0$. Now, all toric manifolds are simply connected i.e $H^{0,1}(X,\mathbb{C}) = 0$. So there exists a potential function $\theta_{\xi}$ such that $\xi = \nabla \theta_{\xi}$. Of course the function also depends on the metric. The Lie derivative is now given by
\begin{equation}
\mathcal{L}_{\xi}\omega = \ddbar\theta_{\xi}.
\end{equation}
From now on, we fix $\tau \in P$ with $1- \al l_j(\tau)>0$ for all $j$ and $\xi$ is the unique holomorphic vector field determined by $\tau$ as in Lemma \ref{sani}.

For the continuity method, we need to set up the Monge-Ampere equation. For that we need an analogue of the $\partial\bar\partial$-lemma in this conical setting. We also set $\be(\al) = (\al l_1(\tau),\dotsc , \al l_N(\tau))$.
By lifting the smooth conical K\"ahler metric $\hat\omega$ to each uniformization covering, we can obtain the following lemma.

\begin{lemma}
There exists a unique (up to constants) function $h \in C^{\infty}_{\be(\al)}(X)$ satisfying
\begin{equation}
Ric(\homega) - \al \homega - [D] - \mathcal{L}_{\xi}\homega = \ddbar h.
\label{ddbar lemma}
\end{equation}
\end{lemma}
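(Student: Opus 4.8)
The plan is to establish the $\ddbar$-lemma in the conical category by carefully working on each uniformizing cover and then patching. First I would observe that the left-hand side of \eqref{ddbar lemma} is a well-defined closed real $(1,1)$-current on $X$: indeed $Ric(\homega) - [D]$ is smooth on $(\CC^*)^n$ because $\det(\nabla^2\hat\varphi)$ has the prescribed vanishing/blow-up along each $D_j$ dictated by the Guillemin-type data $\hat u = \sum_j l_j\log l_j/\be_j$, and the cohomology class of $Ric(\homega) - \al\homega - [D] - \mathcal{L}_\xi\homega$ is $c_1(X) - \al c_1(L) - [D] - 0 = 0$ by the very choice $D = \sum_j (1-\al l_j(\tau))D_j \sim -K_X - \al L$ from Lemma \ref{cone angles}. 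So on the dense torus the current is $\ddbar$ of a smooth $(S^1)^n$-invariant function $h$, unique up to an additive constant; the real content is the boundary regularity, i.e.\ that $h \in C^\infty_{\be(\al)}(X)$.

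Next I would pass to the local model near a vertex $p$ of $P$. In the chart $U_p$ with coordinates $z = (z_1,\dots,z_n)$, the divisor $D$ restricts to $\sum_i a_i\{z_i = 0\}$ with $a_i = 1 - \be_i$ (relabeling so that $z_i = 0$ corresponds to the facet $l_i = 0$ through $p$), and the $\be$-covering $w$ with $|w_i| = |z_i|^{\be_i}$ lifts $\homega$ to a genuinely smooth $(S^1)^n$-invariant K\"ahler metric $\tomega = \ddbar\tilde{\hat\varphi}(w)$ on a neighborhood of the origin in $\CC^n$, by Definition \ref{smooth conical metric}. The point is that $Ric(\homega) - [D]$, when pulled back to the $w$-cover, becomes exactly $Ric(\tomega)$: the $[D_i]$ terms absorb the $\log|z_i|$ singularities coming from the Jacobian of $z \mapsto w$, as is already implicit in the order computation $\be_j = \al l_j(\tau)$ in the proof of Lemma \ref{cone angles}. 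Likewise $\mathcal{L}_\xi\homega$ lifts to $\mathcal{L}_{\tilde\xi}\tomega$ for the (still holomorphic, still toric) lifted field $\tilde\xi$, and $\al\homega$ lifts to $\al\tomega$. Hence on the cover the equation reads $Ric(\tomega) - \al\tomega - \mathcal{L}_{\tilde\xi}\tomega = \ddbar\tilde h$, whose right side is manifestly a smooth closed $(1,1)$-form; the ordinary $\ddbar$-lemma on a ball (or just integrating, using that everything is $(S^1)^n$-invariant and hence a function of the $\rho_i$) produces a smooth $(S^1)^n$-invariant solution $\tilde h$ on the $w$-ball. By definition of $C^\infty_{\be(\al)}(X)$ this says precisely $h \in C^\infty_{\be(\al), p}$ near $p$.

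Finally I would address uniqueness and the global patching. Uniqueness up to a constant is immediate: two solutions differ by a pluriharmonic $(S^1)^n$-invariant function $h'$ on the smooth locus with $\ddbar h'$ extending across $D$ to a smooth form in the covers; being pluriharmonic and bounded (it extends continuously across the divisor and $X$ is compact) forces $h'$ to be constant by the maximum principle. For patching, the functions $h$ constructed on overlapping vertex charts differ by $(S^1)^n$-invariant pluriharmonic functions on the torus, which as above are constants; since the torus is connected, after adjusting by constants the local $h$'s glue to a single global $h \in C^\infty_{\be(\al)}(X)$. I expect the main obstacle to be purely bookkeeping: verifying cleanly that the prescribed divisor $D = \sum_j(1-\al l_j(\tau))D_j$ is exactly the one that makes $Ric(\homega) - [D]$ lift to $Ric(\tomega)$ on every $\be$-cover simultaneously — that is, that the cone-angle data $\be_j = \al l_j(\tau)$ of the reference metric $\homega$ is consistent across all vertex charts — but this is already encoded in the construction of $\homega$ from $\hat u$ and in Lemma \ref{cone angles}, so no genuinely new estimate is needed.
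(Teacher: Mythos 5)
Your proposal is correct and follows exactly the route the paper indicates: the paper gives no detailed proof, only the remark that the lemma is obtained ``by lifting the smooth conical K\"ahler metric $\homega$ to each uniformization covering,'' and your argument fills in precisely those details (cohomological triviality of the class via $D \sim -K_X - \al L$, absorption of the $(1-\be_j)[D_j]$ terms of $Ric(\homega)$ by $[D]$ on each $\be$-cover, the local $\ddbar$-lemma for the lifted smooth data, and patching/uniqueness via invariant pluriharmonic functions being constant). No gaps.
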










We now write the Monge-Ampere equation for the conical soliton. Set $\omega = \homega + \sqrt{-1}\partial\bar\partial \psi$. Then the equation for the conical soliton is
\begin{equation} \tag*{$(**)$}
\begin{cases}
(\homega + \sqrt{-1}\partial\bar\partial \psi)^n = e^{-\al\psi - \xi(\psi) + h}\homega^n\\
\homega + \sqrt{-1}\partial\bar\partial \psi > 0 \\
\psi \in C^{\infty}_{\be(\al)}(X),
\end{cases}
\label{ma}
\end{equation}
where $h$ is from the above lemma.

To solve this equation, like usual, we introduce a parameter $s \in [0,\al]$ and look at the following family of equations.
\begin{equation} \tag*{$(**)_s$}
\begin{cases}
(\homega + \sqrt{-1}\partial\bar\partial \psi_s)^n = e^{-s\psi_s - \xi(\psi_s) + h}\homega^n \\
\homega + \sqrt{-1}\partial\bar\partial \psi_s > 0 \\
\psi_s \in C^{\infty}_{\be(\al)}(X)
\end{cases}
\label{ma s}
\end{equation}
or equivalently
\begin{equation} \tag*{$(**)_s$}
Ric(\omega_s) = s \omega_s + (\al-s)\homega + \mathcal{L}_{\xi}\omega_s + D.
\label{eqn. s}
\end{equation}

The corresponding linearized operator is given by
\begin{equation*}
L_s(\psi) = L(\psi) + s\psi= \Delta \psi + \xi(\psi)+ s\psi.
\end{equation*}
Recall that we are only looking at the space of functions invariant under the toric action. One can define an inner product by
\begin{equation*}
(\psi_1,\psi_2) = \int_{X}{\psi_1 \bar \psi_2 e^{\theta_\xi} \omega^n}
\end{equation*}
and denote the corresponding Hilbert space of square integrable functions by $L^2(e^{\theta_\xi} )$. Then $L$ restricted to $C^{\infty}_{\be}(X)$ is self adjoint and hence can be thought of as an operator from $L^2(e^{\theta_\xi})$ to itself. Also, by virtue of being self adjoint, $L$ only has real eigenvalues. The linear theory for the spaces $C^{k,\gamma}_{\be}(X)$ is summarized below.

\begin{lemma}
Let $\omega$ be a $\be$-conical metric, $\Delta$ be the corresponding Laplacian and $L$ be defined as above. Then

\begin{itemize}
\item[(1)] For $k \geq 2$,  $\Delta : C^{k,\gamma}_{\be}(X) \rightarrow C^{k-2,\gamma}_{\be}(X)$ is an invertible operator, modulo constants
\item[(2)] The Fredholm alternative holds for $L$.
\item[(3)] All nonzero eigenvalues of $-L$ are positive. Moreover, if $Ric(\omega) > t\omega + \mathcal{L}_{\xi}\omega$ and $-L\psi = \lambda \psi$, then $\lambda > t$.
\end{itemize}
 \label{linear theory}
\end{lemma}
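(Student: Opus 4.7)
The plan is to reduce (1) and (2) to classical elliptic theory on the local branched covers introduced in Section 2, and to prove (3) via the integrated weighted K\"ahler Bochner formula.

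For (1) and (2), observe that by the very definition of $C^{k,\gamma}_{\beta,p}$ a function $\psi$ lifts to a genuine $C^{k,\gamma}$ function on the branched cover $|w_j| = |z_j|^{\beta_j}$ at each vertex, on which the conical K\"ahler metric $\omega$ extends smoothly. Standard Schauder estimates on these smooth covers pull back to local estimates for $\Delta$ in the conical H\"older spaces, and patch via a partition of unity subordinate to the toric charts. Arzela-Ascoli on each cover yields the compact embedding $C^{k,\gamma}_\beta \hookrightarrow C^{k-1,\gamma'}_\beta$ for $\gamma' < \gamma$, so $\Delta$ is Fredholm. Its kernel consists of constants: the integration by parts identity $\int_X|\nabla\psi|^2\omega^n = -\int_X \psi\Delta\psi\,\omega^n$ is legitimate after inserting cutoffs near $D$, because the tube volumes around $D$ vanish (Section 2.3) and the full curvature tensor of $\omega$ is uniformly bounded on the regular part (Lemma 2.2), killing the boundary contributions in the limit. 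Self-adjointness of $\Delta$ on $L^2(\omega^n)$ identifies the cokernel with the kernel, giving (1). For (2), the same Schauder and compactness arguments apply to the first-order perturbation $L = \Delta + \xi$; moreover the identity $L\psi = e^{-\theta_\xi}\operatorname{div}(e^{\theta_\xi}\nabla\psi)$ on the smooth part, combined with the same cutoff argument, makes $L$ self-adjoint with respect to $(\cdot,\cdot)_{e^{\theta_\xi}}$, so $\operatorname{Range}(L) = \ker(L)^\perp$ in $L^2(e^{\theta_\xi}\omega^n)$, which is the Fredholm alternative.

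For (3), positivity of nonzero eigenvalues is immediate from self-adjointness: $(-L\psi,\psi)_{e^{\theta_\xi}} = \int_X |\nabla\psi|^2 e^{\theta_\xi}\omega^n \geq 0$, with equality forcing $\psi$ to be constant. For the sharper bound $\lambda > t$, the plan is to apply the weighted K\"ahler Bochner identity for the drift Laplacian $\Delta_\xi$ to $\psi$, pair against $e^{\theta_\xi}\omega^n$, and again integrate by parts across $D$ by the same cutoff procedure. Using $-L\psi = \lambda\psi$, this yields
\begin{equation*}
\lambda \int_X |\nabla\psi|^2 e^{\theta_\xi}\omega^n = \int_X \bigl(|\nabla\nabla\psi|^2 + |\nabla\bar\nabla\psi|^2\bigr)e^{\theta_\xi}\omega^n + \int_X \bigl(R_{i\bar j} + \nabla_i\nabla_{\bar j}\theta_\xi\bigr)\nabla^i\psi\,\nabla^{\bar j}\psi\,e^{\theta_\xi}\omega^n.
\end{equation*}
Since $\lambda \neq 0$ implies $\nabla\psi \not\equiv 0$, the hypothesis $R_{i\bar j} + \nabla_i\nabla_{\bar j}\theta_\xi > t g_{i\bar j}$ (equivalent to $Ric(\omega) > t\omega + \mathcal{L}_\xi\omega$) forces $\lambda > t$.

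The principal technical hurdle is the rigorous global integration by parts across the conical divisor $D$ in both the kernel calculation and the Bochner identity, since $|\nabla\psi|^2$ can blow up as one approaches $D$. The plan is to choose cutoff functions adapted to the distance to $D$ in the conical metric and combine the tubular volume estimate from Section 2.3 with the bounded curvature from Lemma 2.2 to guarantee that the boundary contributions on $\partial B_\epsilon(D)$ vanish in the limit $\epsilon \to 0$.
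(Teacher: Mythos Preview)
The paper does not actually prove this lemma; it is stated as a summary (``The linear theory for the spaces $C^{k,\gamma}_{\beta}(X)$ is summarized below'') and used as a black box. Your outline is precisely the argument one has in mind and is correct: the toric $\beta$-lift from Section~2.2 reduces (1) and (2) to smooth Schauder theory on each vertex chart, compactness and self-adjointness with respect to $e^{\theta_\xi}\omega^n$ give the Fredholm alternative, and the weighted Bochner--Lichnerowicz identity (as in Tian--Zhu \cite{TZhu}) yields (3). Your identification of the cutoff/integration-by-parts issue across $D$ as the only real technical point is accurate, and the ingredients you cite (vanishing tube volume, bounded curvature on the lift) are exactly what is needed.

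One sign to double-check: with the paper's convention $\mathcal{L}_\xi\omega = \ddbar\theta_\xi$ and $\xi = \nabla\theta_\xi$, the hypothesis $Ric(\omega) > t\omega + \mathcal{L}_\xi\omega$ reads $R_{i\bar j} - \nabla_i\nabla_{\bar j}\theta_\xi > t g_{i\bar j}$, and the drift operator $L = \Delta + \nabla\theta_\xi\cdot\nabla$ is the Bakry--Emery Laplacian for the potential $-\theta_\xi$, so the curvature term in the weighted Bochner is likewise $R_{i\bar j} - \nabla_i\nabla_{\bar j}\theta_\xi$. Your display has $+$ in both places; since the two signs flip together the conclusion survives, but the formula as written is inconsistent with the paper's conventions.
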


The lemma that follows is essentially an observation of Zhu \cite{Zhu}, adapted to the conical setting and is required in all the subsequent estimates. The proof in the toric case is in fact much easier.

\begin{lemma}\label{zhus lemma}

There exists a uniform constant $C$ depending only on $\homega$ and $\xi$ such that, for any function $\psi \in C_{\be}^{\infty}(X) \cap PSH(X,\homega)$ 
\begin{equation*}
|\xi(\psi)| \leq C.
\end{equation*}
\end{lemma}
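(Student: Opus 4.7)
The plan is to pass to the open dense torus $(\mathbb{C}^*)^n \subset X$ where toric-invariant functions become functions of the logarithmic coordinates $\rho=(\rho_1,\ldots,\rho_n)$, and then identify $\xi(\psi)$ with the Euclidean directional derivative $c\cdot \nabla_\rho \psi$. Once this is done, boundedness will be an immediate consequence of the fact that the gradient of any toric Kähler potential takes values in the fixed moment polytope $P$ determined by $c_1(L)$.

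Concretely, I would first carry out a direct computation in coordinates. Writing $\xi = \sum c_i z_i\partial/\partial z_i$ as in the paper and using $\rho_i = \log|z_i|^2$, one checks that $z_i\partial/\partial z_i$ acts on any $T^n$-invariant function $f=f(\rho)$ by $\partial/\partial \rho_i$. Hence on $(\mathbb{C}^*)^n$,
\begin{equation*}
\xi(\psi) = c\cdot \nabla_\rho \psi.
\end{equation*}
So it is enough to bound $\nabla_\rho \psi$ pointwise on $\mathbb{R}^n$ by a constant depending only on $\hat\omega$ and $\xi$.

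Next, set $\varphi = \hat\varphi + \psi$, so that $\omega = i\partial\bar\partial\varphi$ on the torus. Both $\hat\varphi$ and $\varphi$ are convex functions of $\rho$ (by the plurisubharmonicity of $\hat\omega$ and of $\omega = \hat\omega + i\partial\bar\partial\psi \geq 0$). Since $[\omega] = [\hat\omega] = c_1(L)$, the Atiyah–Guillemin–Sternberg convexity theorem together with the Delzant description of $L$ shows that the moment map images $\nabla_\rho \varphi(\mathbb{R}^n)$ and $\nabla_\rho \hat\varphi(\mathbb{R}^n)$ both lie in the same closed polytope $\bar P$. (Equivalently: the asymptotic cone of $\hat\varphi$ at infinity is the support function of $P$, and since $\psi$ is bounded on $X$ compact, $\varphi$ has the same asymptotic behaviour, and hence the same Legendre image.) Consequently,
\begin{equation*}
\nabla_\rho \psi(\rho) = \nabla_\rho \varphi(\rho) - \nabla_\rho \hat\varphi(\rho) \in \bar P - \bar P
\end{equation*}
for every $\rho\in\mathbb{R}^n$, and taking inner product with $c$ gives
\begin{equation*}
|\xi(\psi)| \leq |c|\cdot \mathrm{diam}(\bar P - \bar P) = 2|c|\cdot \mathrm{diam}(P)
\end{equation*}
on the open dense torus. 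Since $\psi \in C^\infty_\beta(X)$, the function $\xi(\psi)$ extends continuously across the toric divisors (once one lifts to the branched covers used to define $C^\infty_\beta$), so this bound propagates to all of $X$.

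The only subtle point is the claim that $\nabla_\rho \varphi(\mathbb{R}^n) \subset \bar P$ for an arbitrary (not necessarily a priori estimated) toric-invariant $\hat\omega$-psh function $\psi$; this is really where the conclusion gets its uniformity. If one is worried about invoking the full convexity theorem at this level of generality, the same containment can be obtained by the elementary Legendre-transform comparison: $\varphi - \hat\varphi = \psi$ is uniformly bounded on $X$, so the Legendre transform $u$ of $\varphi$ differs from $\hat u$ by a bounded function, hence is finite precisely on $P$, and $\nabla_\rho \varphi = (\nabla u)^{-1}$ then has image in $\bar P$. This is the main point to verify carefully; everything else is a short coordinate computation.
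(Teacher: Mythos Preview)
Your proof is correct and follows essentially the same route as the paper: pass to the torus, write $\xi(\psi)=c\cdot\nabla_\rho\psi$, and use that both $\nabla_\rho\varphi$ and $\nabla_\rho\hat\varphi$ land in $\bar P$ (since $\psi=\varphi-\hat\varphi$ is bounded) to bound $|\nabla_\rho\psi|$ by the diameter of $P$. The paper's version is terser---it simply asserts $\nabla\varphi(\mathbb{R}^n)=P$ from the boundedness of $\psi$---whereas you spell out the Legendre-transform justification and the extension across the divisors; your constant $2|c|\,\mathrm{diam}(P)$ is a slight overestimate (the sharp bound is $|c|\,\mathrm{diam}(P)$, since $\sup_{v\in P-P}|v|=\mathrm{diam}(P)$), but this is immaterial.
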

\begin{proof}
In the toric situation, the proof is almost trivial. Locally on $(\mathbb{C}^*)^n$, $\homega = \partial\bar\partial \hvarphi$ and for any $\varphi = \hvarphi + \psi$, since $\psi$ is globally bounded and plurisubharmonic, it is easy to see that $\nabla \varphi (\mathbb R^n) = \nabla \hvarphi (\mathbb R^n) = P$ and $\partial\bar\partial \varphi$ extends to a global K\"ahler metric.  So there exists a uniform constant $C$ such that
\begin{equation*}
|\nabla \psi| \leq C.
\end{equation*}
\noindent But then, since $\xi$ is given by
\begin{equation*}
\xi = \sum_{i=1}^{n}{c_i z_i\frac{\partial}{\partial z_i}},
\end{equation*}
\noindent we have that

\begin{equation*}
\xi(\psi) = c\cdot \nabla \psi.
\end{equation*}

\noindent This gives us the required bound.

\end{proof}

The following proposition shows that there exists a solution to equation \ref{ma s} at $s=0$

\begin{proposition}
For any $h \in C^{\infty}_{\be(\al)}(X)$ there exists a unique function $\psi \in C^{\infty}_{\be(\al)}(X)$ satisfying
\begin{equation*}
\begin{cases}
(\homega + \sqrt{-1}\partial\bar\partial \psi)^n = e^{h - \xi(\psi)}\homega^n \\
\sup{\psi} = 0\\
\omega = \homega + \sqrt{-1}\partial\bar\partial \psi > 0.
\end{cases}
\label{calabi-yau}
\end{equation*}

\end{proposition}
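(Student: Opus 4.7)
I would solve this via the standard continuity method. Consider the family
\begin{equation*}
(\homega + \sqrt{-1}\partial\bar\partial\psi_t)^n = e^{th - \xi(\psi_t) + c_t}\homega^n, \quad t\in[0,1],
\end{equation*}
where $\psi_t \in C^\infty_{\be(\al)}(X)$ and the real constant $c_t$ is chosen at each $t$ so that $\int e^{th-\xi(\psi_t)+c_t}\homega^n = \int \homega^n$ (the cohomological constraint for the left side). At $t=0$ we have the trivial solution $\psi_0 \equiv 0$, $c_0 = 0$. Running the method to $t=1$ produces a solution of the Monge-Amp\`ere equation with the constant $c_1$ on the right, which we then absorb into $h$ (after a final normalization $\sup\psi = 0$) to recover the stated equation. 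Thus one must show the set of $t$ for which a smooth $\be(\al)$-conical solution exists is both open and closed in $[0,1]$.

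\textbf{Openness.} Linearizing at a solution $\psi_t$, the derivative of the nonlinear operator is $L_t \phi = \Delta_{\omega_t}\phi + \xi(\phi)$. By Lemma \ref{linear theory}, $L_t$ is self-adjoint with respect to the measure $e^{\theta_{\xi,t}}\omega_t^n$, has only non-negative eigenvalues, and its kernel consists of constants. Working modulo constants on $C^{2,\gamma}_{\be(\al)}(X)\to C^{0,\gamma}_{\be(\al)}(X)$ and applying the implicit function theorem (together with the freedom in $c_t$) yields openness.

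\textbf{Closedness.} One needs uniform a priori estimates in $t$. For the $C^0$ bound, Lemma \ref{zhus lemma} gives $|\xi(\psi_t)| \le C$ independent of $t$ (recall $\xi(\psi_t) = c\cdot\nabla\psi_t$ in logarithmic coordinates), so the right-hand side is bounded as a function on $X$ independently of $\psi_t$; a Kolodziej-/Brendle-JMR-type pluripotential estimate, or the toric symplectic-side argument via the Legendre transform $u_t$, then gives $\mathrm{osc}\,\psi_t \le C$. For the Laplacian bound, the Aubin--Yau maximum principle applied to $\log \mathrm{tr}_{\homega}\omega_t - A\psi_t$ works once we use the curvature bound of Lemma \ref{curvature bound} for $\homega$ together with the $C^0$ and $C^1$ bounds on $\psi_t$; this yields $C^{-1}\homega \le \omega_t \le C\homega$. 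Higher regularity follows from complex Evans--Krylov and Schauder estimates applied on each uniformization cover near a vertex of $P$ (where $\homega$ lifts to a smooth metric and $\psi_t$ lifts to a smooth solution of a uniformly elliptic MA equation), giving uniform bounds in $C^{k,\gamma}_{\be(\al)}(X)$ for all $k$.

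\textbf{Uniqueness and main obstacle.} If $\psi_1,\psi_2$ both solve the equation with $\sup \psi_i = 0$, then at an interior maximum $p$ of $\psi_1-\psi_2$ on the open orbit, comparison of determinants combined with the vanishing of $\nabla(\psi_1-\psi_2)(p)$ forces $\xi(\psi_1-\psi_2)(p)=0$, and the Monge-Amp\`ere inequality then yields $\omega_1^n(p)\le \omega_2^n(p)$ with equality, from which a linearized argument along the segment $s\mapsto s\psi_1+(1-s)\psi_2$ and the maximum principle for $L_s$ gives $\psi_1-\psi_2\equiv$ const, hence $\equiv 0$ by the normalization. The main obstacle is the $C^0$ estimate in the conical class: Lemma \ref{zhus lemma} reduces matters to the Calabi--Yau equation with bounded right-hand side, but one still needs a pluripotential argument compatible with the conical Guillemin singularities along the toric divisors, which is cleanest either via the convex function $u$ on the polytope $P$ (and polytope geometry controlling the oscillation) or by lifting to the uniformization covers where standard Yau-type Moser iteration applies.
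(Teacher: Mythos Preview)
Your approach is essentially the same as the paper's: continuity method along $t\mapsto th$, openness via the linearized operator $L=\Delta+\xi$ and the implicit function theorem, $C^0$ via Lemma~\ref{zhus lemma} (bounding $|\xi(\psi_t)|$) followed by Kolodziej-type pluripotential estimates, and $C^2$ via the Aubin--Yau quantity $\log\mathrm{tr}_{\homega}\omega_t-A\psi_t$ with computations lifted to the $\beta$-covers.

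One small redundancy: the normalizing constant $c_t$ is unnecessary. Since $\xi$ annihilates constants (it is $c\cdot\nabla$ in logarithmic coordinates), the equation $(\homega+\ddbar\psi)^n=e^{th-\xi(\psi)}\homega^n$ is invariant under $\psi\mapsto\psi+\text{const}$, so the paper simply imposes $\sup\psi_s=0$ directly along the path with no extra constant; in particular your ``absorb $c_1$ into $h$'' step is moot because one can take $c_t\equiv 0$ throughout. For higher regularity the paper invokes Calabi's third-order estimate on the covers rather than Evans--Krylov, but either route is standard. The paper does not spell out uniqueness in this proof; your linearization-along-the-segment argument (using that $\ker L$ consists of constants) is the correct one and avoids the issue that a pointwise maximum of $\psi_1-\psi_2$ might lie on the divisor.
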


\begin{proof}
We proceed by the continuity method. Consider the family of equations
\begin{equation}
\begin{cases}
(\homega + \sqrt{-1}\partial\bar\partial \psi_s)^n = e^{sh - \xi(\psi_s)}\homega^n \\
\sup{\psi_s} = 0\\
\omega = \homega + \sqrt{-1}\partial\bar\partial \psi_s > 0
\end{cases}
\label{random eq}
\end{equation}
\noindent and set $S = \{s \in [0,1] |$ equation \eqref{random eq} has a solution $\psi_s \in C^{3,\gamma}_{\be(\al)}(X)$ at $s$\}. The set $S$ is clearly nonempty, since $0 \in S$. In what follows we suppress the index $s$ for convinience.
\medskip

\noindent \textit{Openness.} This follows straight from part(a) of Lemma \ref{linear theory} and the implicit function theorem on the space $C^{\infty}_{\be(\al)}(X)$, since the linearized operator is just $L$.

\medskip

\noindent \textit{$C^0$ estimates.} By Lemma \ref{zhus lemma}, the right hand side of the equation is uniformly bounded in $s$ and hence in particular there exists a uniform $L^p$ bound for any $p>1$. Now, Kolodziej's results  and their generalizations \cite{Ko, EGZ, ZhZ} give a uniform $C^0$ bound.

\medskip

\noindent \textit{Second order estimates.} Consider the quantity

\begin{equation*}
H_s = \log{tr_{\homega}\omega_s} -A\psi,
\end{equation*}

\noindent where $A$ is some large number to be chosen later. Since both $\homega$ and $\omega_s$ have poles of same order, the quantity is bounded. Let $\sup_{X}{H_s} = H_s(q)$. We lift all the local calculations to the $(S^1)^n$ invariant $\be$- covering space. The second order estimates easily follow from \cite{Y1} and \cite{TZhu}.

\medskip
\noindent \textit{$C^3$ and higher order estimates.} Calabi's method for third order estimates can again be carried out by lifting the calculations to the $\be$-cover. The reader should refer to \cite{PSS} for the simplified computations. Higher order derivatives can be obtained by a standard bootstrapping argument. Closedness now follows from Ascoli-Arzela. Hence $1 \in S$.

\end{proof}

\subsection{Estimates and proofs of Theorems \ref{theorem-1} and \ref{theorem-2}} For later applications, we need to get the precise dependence of the $C^0$ estimate on the polytope. So we introduce some notation. Recall that \\
\centerline{$P = \{x \in \mathbb{R}^n~ |~ l_j(x) = v_j \cdot x + \lambda_j > 0\}$.}

\medskip

\noindent We let $\nu$ and $\sigma$ be two constants such that
\begin{align*}
\nu^{-1}<Vol(P) &< \nu,  \\
(\sigma)^{-1} < diam (P)& < \sigma.
\end{align*}

On $(\mathbb{C}^*)^n$ we write $\homega = \ddbar \hvarphi$ and $\omega_s = \ddbar\varphi_s$. Using the standard logarithmic coordinates like before one can rewrite equation \ref{ma s} as a real Monge-Ampere on $\mathbb{R}^n$
\begin{equation}
\begin{cases}
\det(\nabla^2 \varphi_{s}) = e^{-s(\varphi_{s} - \tau\cdot \rho) - (\al - s)(\hvarphi - \tau\cdot\rho) - c\cdot \nabla \varphi_s}, \\
u_{s} = \mathcal{L}\varphi_{s} = \sum_{j=1}^N(\beta_j)^{-1} l_j(x) \log l_j(x) + f (x), \hspace{0.5in} f \in C^{\infty}(\overline P)\\
\nabla^2 \varphi_{s} > 0.
\end{cases}
\end{equation}

\begin{proposition}
For any $s_0 \in (0,\al)$ there exists a constant $C = C(n,s_0, \nu, \sigma, \Lambda, \sup_{j,P}{|l_j|})$  such that

\begin{equation*}
|\varphi_s - \hvarphi| \leq C
\end{equation*}
\noindent for all $s \in [s_0, \al)$. Here $\Lambda$ is the constant from Lemma \ref{ggrad} below.
\label{c^0 estimate}
\end{proposition}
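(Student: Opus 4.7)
The strategy is to adapt Wang-Zhu's $C^0$ estimate for smooth toric K\"ahler-Ricci solitons to the present conical and $s$-dependent setting, carefully tracking the dependence of the constant on the polytope data, since this uniformity is what we will need in the toric degeneration of Theorem \ref{theorem-3}.

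First I would normalize. Using the toric translation symmetry in the $\rho$-variables, translate so that $\rho = 0$ is the unique minimizer of the strictly convex function $\varphi_s - \tau\cdot\rho$; set $m_s := \min_\rho(\varphi_s - \tau\cdot\rho) = \varphi_s(0)$ and $\tilde\varphi_s(\rho) := \varphi_s(\rho) - \tau\cdot\rho - m_s$, so that $\tilde\varphi_s \geq 0$, $\tilde\varphi_s(0) = 0$, and $\nabla\tilde\varphi_s(0) = 0$. The real Monge-Amp\`ere equation then reads
\[
\det(\nabla^2\tilde\varphi_s) = e^{-s\tilde\varphi_s - (\alpha - s)(\hvarphi - \tau\cdot\rho) - c\cdot\nabla\tilde\varphi_s - sm_s - c\cdot\tau},
\]
and the problem reduces to bounding $m_s$ uniformly in $s \in [s_0,\alpha)$ and converting this into a bound on $\psi_s = \varphi_s - \hvarphi$.

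For the lower bound $m_s \geq -C$, I integrate both sides over $\mathbb{R}^n$. The left-hand side equals $\mathrm{Vol}(P)$ via the change of variables $x = \nabla\tilde\varphi_s(\rho)$ mapping onto $P-\tau$. On the right, the factor $e^{-c\cdot\nabla\tilde\varphi_s - c\cdot\tau}$ is controlled uniformly because $\nabla\varphi_s \in P$ and $|c|$ is bounded by Lemma \ref{ggrad} in terms of $\Lambda$, while $e^{-(\alpha-s)(\hvarphi - \tau\cdot\rho)}$ is bounded above by $e^{(\alpha-s)\hat u(\tau)}$. Using convexity together with $\nabla\tilde\varphi_s \in P - \tau \subset B_\sigma(0)$ gives $\tilde\varphi_s(\rho) \leq \sigma|\rho|$, so $e^{-s\tilde\varphi_s}$ is bounded below by a positive constant on the unit ball, yielding $\mathrm{Vol}(P) \geq c_0\, e^{-s m_s}$ with $c_0$ depending only on $n, \nu, \sigma, \Lambda, \sup_{j,P}|l_j|$, hence $m_s \geq -C/s_0$. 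For the upper bound, the heart of the matter is the compatibility identity $\int_P (x - \tau)e^{c\cdot x}\,dx = 0$ coming from (\ref{compatible}). Pulling this back via $x = \nabla\tilde\varphi_s(\rho)$ and combining it with integration by parts against the Monge-Amp\`ere measure produces an integral identity tying $m_s$ to convex-geometric quantities on $P$. A contradiction argument then forces $m_s \leq C$: were $m_s \to +\infty$, the Monge-Amp\`ere mass $\mathrm{Vol}(P)$ would demand $\tilde\varphi_s$ to stay very small on ever larger regions, which is incompatible with the linear growth of $\varphi_s - \tau\cdot\rho$ at infinity forced by the vertices of $P$. Finally, the two-sided bound on $m_s$ is transferred to $\|\psi_s\|_\infty$ via the Legendre-dual comparison $\|\varphi_s - \hvarphi\|_\infty \leq \|u_s - \hat u\|_{L^\infty(P)}$ (which holds in both directions), using $u_s(\tau) = -m_s$ and the Guillemin boundary behavior of $u_s$ dictated by the fixed cone angles $\beta_j = \alpha l_j(\tau)$.

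The main obstacle is the upper bound on $m_s$: as $s \to \alpha^-$, the $(\alpha - s)(\hvarphi - \tau\cdot\rho)$ term loses its decay at infinity, and the naive integral bound degenerates. One must extract decay entirely from $e^{-s\tilde\varphi_s}$, which is possible precisely because $\tau$ lies in the interior of $P$ so that $\tilde\varphi_s$ grows linearly at infinity. A second subtle point is to express every intermediate constant only in terms of $n, s_0, \nu, \sigma, \Lambda$ and $\sup_{j,P}|l_j|$, rather than in terms of the specific reference $\hvarphi$ or the distance from $\tau$ to $\partial P$, since this estimate will later have to be applied uniformly across a family of varying polytopes along the toric degenerations used in the proof of Theorem \ref{theorem-3}.
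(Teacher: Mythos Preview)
Your outline has the right endpoints (a two-sided bound on the minimum, then conversion via Legendre duality), but there are genuine gaps in the middle that would make the argument fail as written.

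\medskip

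\textbf{Wrong auxiliary function.} The key is to work not with $\phi_s=\varphi_s-\tau\cdot\rho$ alone but with the combination
\[
w_s \;=\; s\,\phi_s + (\alpha-s)\,\hat\phi, \qquad \hat\phi=\hvarphi-\tau\cdot\rho,
\]
because the equation reads $\det(\nabla^2\varphi_s)=e^{-w_s-c\cdot\nabla\varphi_s}$. Your normalization translates the minimum of $\phi_s$ to the origin, but the reference term $(\alpha-s)(\hvarphi-\tau\cdot\rho)$ does not follow: after the translation, $\hvarphi-\tau\cdot\rho$ evaluated near the new origin is $\hat\phi(\rho_s)$ in the old coordinates, and this is completely uncontrolled unless you already know $|\rho_s|$ is bounded. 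Hence the lower bound $\mathrm{Vol}(P)\ge c_0 e^{-sm_s}$ you write cannot be obtained: on the unit ball $e^{-(\alpha-s)(\hvarphi-\tau\cdot\rho)}$ may be arbitrarily small. Working with $w_s$ instead, both bounds on $m_s=\min w_s$ follow cleanly from a single barrier lemma (if $v\ge 0$, $v(0)=0$ and $\det\nabla^2 v\ge\lambda$ on $\{v\le 1\}$, then $\mathrm{Vol}\{v\le1\}\le C\lambda^{-1/2}$) together with integration of $e^{-w_s}$, and no information about the location of the minimum is needed at this stage.

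\medskip

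\textbf{The role of $\Lambda$ is misidentified.} The constant $\Lambda$ from Lemma~\ref{ggrad} is not a bound on $|c|$; it is the gradient bound $|\nabla\log l_j(\nabla\hvarphi)|\le\Lambda$. In the paper this is used in a separate step to show that the minimum point $\rho_s$ of $w_s$ stays in a fixed ball. The mechanism is the identity
\[
\int_{\mathbb{R}^n} l_j(\nabla\hvarphi)\,e^{-w_s}\,d\rho \;=\; l_j(\tau)\cdot\int_{\mathbb{R}^n} e^{-w_s}\,d\rho,
\]
which comes from the compatibility $\int_P(x-\tau)e^{c\cdot x}\,dx=0$ after a change of variables. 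If $|\rho_s|$ were large then $l_j(\nabla\hvarphi(\rho_s))$ would be small for some $j$, and the $\Lambda$-Lipschitz bound on $\log l_j(\nabla\hvarphi)$ forces $l_j(\nabla\hvarphi)$ to be small on the whole ball where $e^{-w_s}$ concentrates (by the linear growth $w_s\ge\zeta|\rho-\rho_s|-C$ obtained in the first step), contradicting the displayed identity since $l_j(\tau)$ is bounded below. This is precisely the step that consumes $\Lambda$; it is absent from your sketch.

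\medskip

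\textbf{From $|m_s|$ to $\|\psi_s\|_\infty$.} The final conversion is not just ``$u_s(\tau)=-m_s$ plus Guillemin boundary behavior.'' One uses the linear growth of $w_s$ (now $w_s\ge\zeta|\rho|-C$ since $|\rho_s|\le C$) to get $\int_P|\nabla u_s|^p\,dx=\int_{\mathbb{R}^n}|\rho|^p e^{-w_s-c\cdot\nabla\varphi_s}\,d\rho\le C(p)$, and then Morrey's inequality gives $\mathrm{osc}_P\,u_s\le C$; combined with the value $u_s(x_s)=\rho_s\cdot x_s-\varphi_s(\rho_s)$ (now bounded) this yields $\|u_s\|_{C^0(P)}\le C$, and the elementary Legendre identity $\sup_{\mathbb{R}^n}|\varphi_s-\hvarphi|=\sup_P|u_s-\hat u|$ finishes the proof.
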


We use the arguments  in \cite{WZ} with inputs and simplifications from  \cite{D2}, most notably the last step which helps us avoid the Harnack inequality. We first need two technical lemmas.

 \begin{lemma}
 Suppose $v \geq 0$ is a strictly convex function on $\mathbb{R}^n$ such that $v(0) = 0$ and $\det(\nabla^2 v) \geq \lambda $ on $v \leq 1$. Then there exists $ C > 0$ such that
 \begin{equation}
Vol(v \leq 1) \leq  C\lambda^{-1/2}.
\end{equation}
 \label{conv anal lemma}
  \end{lemma}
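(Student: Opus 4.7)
The lemma is a classical fact in convex analysis, and the sharp exponent $\lambda^{-1/2}$ -- rather than the naive $\lambda^{-1/n}$ one gets from a pointwise eigenvalue lower bound -- reflects that the hypothesis is on the determinant of $\nabla^{2} v$. The estimate is sharp for the quadratic model $v(x)=\tfrac12 x^{T}Ax$ with $\det A=\lambda$, for which $\{v\le 1\}$ is an ellipsoid of volume $\omega_{n}2^{n/2}\lambda^{-1/2}$. The plan is to exploit the affine invariance of $|\{v\le 1\}|^{2}\lambda$ to reduce to a normalized configuration, and then bound the gradient image $\nabla v(\Omega)$ both above and below.

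\emph{Step 1 (Affine normalization).} Under a linear change of coordinates $y=Tx$ fixing the origin, $|\Omega|$ scales by $|\det T|$ while $\inf_{\Omega}\det\nabla^{2}v$ scales by $|\det T|^{-2}$, so the product $|\Omega|^{2}\cdot\inf_{\Omega}\det\nabla^{2}v$ is preserved. Applying John's theorem to the convex body $\Omega=\{v\le 1\}$, I may assume after such a normalization that $B(x_{0},1)\subset\Omega\subset B(x_{0},n)$ for some center $x_{0}$, so $\Omega$ is comparable to a unit ball.

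\emph{Step 2 (Gradient-image bounds).} Strict convexity of $v$ makes $\nabla v$ injective on $\Omega$, and the change-of-variables formula gives the lower bound $|\nabla v(\Omega)|=\int_\Omega \det\nabla^{2}v\,dx \ge \lambda|\Omega|$. For the upper bound, the tangent-plane inequality $v(x)\ge v(p)+\nabla v(p)\cdot(x-p)$, combined with $v\le 1$ on $\Omega$ and $v(p)\ge 0$, forces every $\xi=\nabla v(p)\in\nabla v(\Omega)$ to satisfy $\xi\cdot(x-p)\le 1$ for all $x\in\Omega$, i.e.\ $\xi\in(\Omega-p)^{\circ}$. Together with the John normalization of Step 1, the Blaschke--Santal\'o inequality (applied after shifting to the Santal\'o point of $\Omega$) yields $|\nabla v(\Omega)|\le C_{n}|\Omega|^{-1}$. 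Combining the two estimates gives $\lambda|\Omega|^{2}\le C_{n}$, hence $|\Omega|\le C\lambda^{-1/2}$.

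\emph{Main obstacle.} The polar-duality upper bound in Step 2 is the delicate point: the base point $p$ of the polar body varies with $\xi$, so Blaschke--Santal\'o must be applied carefully -- using the Santal\'o point of $\Omega$ rather than the origin, and exploiting that, after John's normalization, all relevant reference points lie in a bounded region comparable to the unit ball. An alternative route, avoiding polar duality altogether, is to establish the reverse-Gaussian estimate $\int_{\mathbb{R}^{n}}e^{-v}\le C_{n}\lambda^{-1/2}$ via Brunn--Minkowski on the sublevel sets $\{v\le t\}$ and a Monge--Amp\`ere comparison with the quadratic model, and then conclude from $e^{-v}\ge e^{-1}$ on $\Omega$. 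In either approach, the crucial input is the affine invariance from Step 1, which is what allows the sharp exponent $\lambda^{-1/2}$ to emerge rather than the weaker $\lambda^{-1/n}$ produced by any pointwise Hessian argument.
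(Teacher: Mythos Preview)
Your Step 2 upper bound is where the argument breaks down. You correctly derive $\xi=\nabla v(p)\in(\Omega-p)^{\circ}$, but this polar body depends on $p$, and as $p$ approaches $\partial\Omega$ the set $(\Omega-p)^{\circ}$ becomes unbounded in the outward normal direction. Even after John normalization the union $\bigcup_{p\in\Omega}(\Omega-p)^{\circ}$ has infinite volume, and there is no single point $s$ for which $\nabla v(\Omega)\subset(\Omega-s)^{\circ}$. Invoking Blaschke--Santal\'o ``at the Santal\'o point'' therefore does not help: that inequality controls $|\Omega|\cdot|(\Omega-s)^{\circ}|$ for one fixed $s$, and you have not shown that $\nabla v(\Omega)$ lies in any single such polar. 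Your ``main obstacle'' paragraph names the difficulty but does not resolve it; the remark that after normalization ``all relevant reference points lie in a bounded region'' is not enough, since boundedness of the base points $p$ says nothing about the size of the corresponding polars. The alternative ``reverse-Gaussian'' route you sketch is essentially equivalent to the lemma itself via the layer-cake formula, so it does not furnish an independent proof either.

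The barrier argument the paper has in mind avoids polar duality altogether. After an affine map putting $\Omega$ in John position $B_{1}(c)\subset\Omega\subset B_{n}(c)$ (the invariance of $\lambda|\Omega|^{2}$ you noted in Step 1 holds for affine maps as well), compare $v$ with the quadratic
\[
w(x)=1+\tfrac{\lambda^{1/n}}{2}\bigl(|x-c|^{2}-1\bigr).
\]
On $\partial\Omega$ one has $|x-c|\ge 1$, hence $v=1\le w$; and $\det\nabla^{2}v\ge\lambda=\det\nabla^{2}w$. The comparison principle for the Monge--Amp\`ere operator then gives $v\le w$ throughout $\Omega$. Evaluating at $x=c$ (not at the minimum of $v$, which need not lie in $B_{1}(c)$) and using $v\ge 0$ yields $0\le w(c)=1-\lambda^{1/n}/2$, so $\lambda\le 2^{n}$. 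Since $|\Omega|\le n^{n}\omega_{n}$ in John position, one obtains $\lambda|\Omega|^{2}\le C_{n}$, and the general case follows by affine invariance.
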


 \noindent The proof is a standard barrier function argument and so we skip it.

\begin{lemma} \label{ggrad}
\noindent If $\hvarphi$ is the Legendre transform of $\hat u$ and we define the function
\begin{equation}
g_j(\rho) = \log(l_j(\nabla \hvarphi (\rho))).
\end{equation}
\noindent Then, there exists a constant $\Lambda$ such that
\begin{equation}
 \sup_{\mathbb{R}^n}{|\nabla g_j|} \leq \Lambda.
\end{equation}
\noindent Here, $\Lambda$ depends only on $\be_j$, $N$, $n$ and the normal vectors $v_j$.
\label{donaldson lemma}
\end{lemma}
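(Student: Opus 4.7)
The first step is to compute $\nabla_\rho g_j$ explicitly via the Legendre duality between $\hat u$ and $\hat\varphi$. Since $x = \nabla_\rho\hat\varphi(\rho)$ and $\rho = \nabla_x\hat u(x)$, the Hessians are mutually inverse: $\nabla^2_\rho\hat\varphi = (\nabla^2_x\hat u)^{-1}$. Writing $M(x) := \nabla^2_x\hat u(x)$, the chain rule applied to $g_j(\rho) = \log l_j(\nabla\hat\varphi(\rho))$ yields
\begin{equation*}
\nabla_\rho g_j(\rho) = \frac{M(x)^{-1}\,v_j}{l_j(x)}, \qquad x = \nabla_\rho\hat\varphi(\rho).
\end{equation*}
Differentiating $\hat u = \sum_k \beta_k^{-1}\, l_k\log l_k$ twice in $x$ gives the explicit formula
\begin{equation*}
M(x) = \sum_{k=1}^N \frac{v_k\, v_k^T}{\beta_k\,l_k(x)}.
\end{equation*}
The task reduces to showing that $F_j(x) := l_j(x)^{-1}|M(x)^{-1} v_j|$ is bounded uniformly on the open polytope. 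Since $F_j$ is continuous on $P^\circ$ and $\bar P$ is compact, it suffices to prove local boundedness near each boundary point of $P$.

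Fix $x_0 \in \partial P$ and let $A = \{k_1,\ldots,k_r\}$ be the \emph{active} indices, i.e.\ those $k$ with $l_k(x_0) = 0$. By the Delzant condition the normals $\{v_{k_1},\ldots,v_{k_r}\}$ are part of an integral basis of $\mathbb{Z}^n$; after an $SL(n,\mathbb{Z})$ change of coordinates on $\mathbb{R}^n$, I may assume $v_{k_i} = e_i$ and $l_{k_i}(x) = \tilde x_i$, so that on a small neighborhood $U$ of $x_0$ the coordinates $\tilde x_1,\ldots,\tilde x_r$ are small and nonnegative while $l_m$ is bounded below by a positive constant for $m \notin A$. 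Decompose $M = D + R$, where $D = \sum_{i=1}^r (\beta_{k_i}\tilde x_i)^{-1} e_i e_i^T$ collects the singular contributions and $R = \sum_{m \notin A} (\beta_m l_m)^{-1} v_m v_m^T$ is smooth on $U$. The key estimate is an expansion of $M^{-1}$ via a Neumann series: in the vertex case $r = n$, $D$ is invertible with $D^{-1} = \operatorname{diag}(\beta_{k_i}\tilde x_i)$, and for $\tilde x$ small, $(I+D^{-1}R)^{-1} = \sum_{k\ge 0}(-D^{-1}R)^k$ converges absolutely. A direct computation of each term shows
\begin{equation*}
(M^{-1})_{ij} = \beta_{k_i}\,\tilde x_i\,\delta_{ij} + O(\tilde x_i\tilde x_j),
\end{equation*}
uniformly on $U$. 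The case $r < n$ is handled analogously using a Schur-complement decomposition of $M$ adapted to the splitting $\mathbb{R}^n = \operatorname{span}\{e_1,\ldots,e_r\} \oplus \operatorname{span}\{e_{r+1},\ldots,e_n\}$.

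With this expansion the proof concludes by a case split. If $j\in A$, say $j = k_{i_0}$, then $v_j = e_{i_0}$, $l_j = \tilde x_{i_0}$, and every entry of the $i_0$-th column of $M^{-1}$ carries a factor of $\tilde x_{i_0}$, giving $|M^{-1}v_j| \le C\,l_j$; if $j\notin A$, then $l_j$ is bounded below on $U$ while the same entry bounds give $|M^{-1}v_j| \le C$. Either way $F_j \le C$ on $U$. A finite open cover of $\bar P$ by such neighborhoods, together with trivial bounds on any compact subset of $P^\circ$, produces the global constant $\Lambda$. The main obstacle is the uniform off-diagonal bound $(M^{-1})_{ij} = O(\tilde x_i\tilde x_j)$, which is the step that couples the Delzant straightening with the rigid form of the singular part $D$ (each successive term in the Neumann series $D^{-1}(RD^{-1})^k$ automatically carries one factor of $\tilde x$ from each flanking $D^{-1}$), and is ultimately what produces the cancellation between the blow-up of $M^{-1}$ in the singular directions and the vanishing of $l_j$.
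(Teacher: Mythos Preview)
Your local–compactness approach is a genuinely different route from the paper's, and for a \emph{fixed} polytope $P$ it is correct: the Legendre-duality identity $\nabla g_j = l_j^{-1} M^{-1}v_j$ with $M = \sum_k (\beta_k l_k)^{-1} v_k v_k^T$ is right, the Delzant straightening near a boundary stratum is legitimate, and the Neumann/Schur expansion does produce the entrywise bound $(M^{-1})_{ij} = \beta_{k_i}\tilde x_i\,\delta_{ij} + O(\tilde x_i\tilde x_j)$ on a small neighborhood, yielding $F_j \le C$ there. A finite cover then gives a global bound.

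There is, however, a real gap relative to the stated conclusion. The lemma asserts that $\Lambda$ depends only on $\beta_j$, $N$, $n$ and the normal vectors $v_j$ --- \emph{not} on the constants $\lambda_j$ defining $P$. Your argument does not deliver this. The size of each neighborhood $U$ where the Neumann series converges is governed by $\|D^{-1}R\| < 1$, and $\|R\| = \|\sum_{m\notin A}(\beta_m l_m)^{-1}\tilde v_m\tilde v_m^T\|$ depends on the values $l_m(x_0)$, hence on the $\lambda_j$'s; likewise the number of charts in the finite cover and the bounds on the ``trivial'' compact interior piece depend on the geometry of $P$. So the constant you produce is really $\Lambda = \Lambda(\beta,N,n,v,\lambda)$. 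This matters: in Section~4 the polytopes $P^t$ vary and a facet degenerates as $t\to 0$, and the paper explicitly uses that the $C^0$-estimate of Proposition~3.1 (into which $\Lambda$ feeds) is uniform under such perturbation.

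The paper's proof avoids this by a global algebraic computation. Writing $M = A^T A$ with $A = \{v_{j\gamma}/\sqrt{\beta_j l_j}\}$, the Cauchy--Binet formula gives $\det M = \sum_{|J|=n} C_J^2/\prod_{k\in J}(\beta_k l_k)$ where $C_J$ is the minor of the $N\times n$ matrix of normals $V$. A similar cofactor identity expresses the numerator $l_j^{-1}(M^{-1}v_j)_\gamma$ as a sum over $(n{-}1)$-subsets not containing $j$. Bounding the denominator from below by the sub-sum over those $J$ containing $j$, the $l_k$'s cancel termwise and one is left with a weighted average of ratios $C_{J;\gamma}/C_{j,J}$ of minors of $V$ alone. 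Since the $v_j$ are integral, each nonzero $|C_{j,J}|\ge 1$, giving $|\partial_\gamma g_j| \le \beta_j\cdot\max |C_{J;\gamma}|$, manifestly independent of the $l_k$'s (and hence of $\lambda_j$ and of $x$). That termwise cancellation is precisely what your compactness argument replaces by soft bounds, and it is what you would need to recover to match the lemma as stated.
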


\begin{proof}

Recall that the polytope $P$ is given by faces $l_j(x) = v_j \cdot x + \lambda_j$ and let $\hat u$ be the usual conical symplectic potential given by
\begin{equation}
\hat u = \sum_{j=1}^{N}{\frac{1}{\be_j}l_j \log l_j}.
\end{equation}
We then set,
\begin{equation*}
V = \{v_{j\ga}\}, \hspace{0.5in}  A = \{\frac{v_{j\ga}}{\sqrt{\be_jl_j}}\} := \{a_{j\ga}\},
\end{equation*}
\noindent where $v_j = \{v_{j\ga}\}$ is the vector normal to the face $l_j$. For any  $J = \{j_1, \dotsc j_n \} \subset \{1, \dotsc N\}$ in some order, we let $M_J$ be the corresponding $n \times n$ minor of $V$ and $C_J = det(M_J)$.  Let $\tilde M_J$ and $\tilde C_J$ be the corresponding quantities for $ A$. Then, in our notation

\begin{equation}
\nabla^2 \hat u := \{\hat {u}_{\ga\mu}\} = ( A)^t  A .
\end{equation}

\noindent \textit{\textbf{Claim 1}}  \[\det(\nabla^2 \hat u) = \sum_{j_1<\dotsc < j_n}{\tilde {C}_{j_1,\dotsc j_n}^2} = \sum_{j_1< \dotsc <j_n}{\frac{C^2_{j_1,\dotsc j_n}}{(\be_{j_1}l_{j_1})\dotsc (\be_{j_n}l_{j_n})}}\] .\\§
\noindent This is known as the Cauchy-Binet formula in literature. The proof is of course just a simple exercise in undergraduate  linear algebra and so we skip it. \\

\noindent  Now, for $J = \{j_1, \dotsc, j_{n-1}\}$ let $M_{J ;\ga}$ be the minor obtained by deleting the $\ga^{th}$ column from the matrix of row vectors $v_{j_i}$. We once again set $C_{J ; \ga} = det(M_{J ;\ga})$\\

\noindent \textit{\textbf{Claim 2}}
\begin{align}
\det(\nabla^2 \hat u) \sum_{\mu=1}^{n}{a_{j\mu} \hat {u}^{\mu\ga}} &= (-1)^{\ga+1}\sum_{\substack{j_1<\dotsc < j_{n-1} \\ j_i \neq j}}{\tilde C_{j,j_1,\dotsc j_{n-1}}\tilde C_{j_1,\dotsc j_{n-1} ; \ga}}\\
& = \frac{(-1)^{\ga+1}}{\sqrt{\be_{j}l_j}} \sum_{\substack{j_1<\dotsc < j_{n-1} \\ j_i \neq j}}{\frac{ C_{j,j_1,\dotsc j_{n-1}} C_{j_1,\dotsc j_{n-1} ; \ga}}{(\be_{j_1}l_{j_1})\dotsc (\be_{j_{n-1}}l_{j_{n-1}})}},
\end{align}

\noindent where $\{u^{\mu\ga}\}$ denotes the inverse matrix of $\nabla^2 \hat u$.

\begin{proof}The proof proceeds along the lines of the proof for Cauchy-Binnet formula, only it requires more book keeping and is as follows -  Denoting by $\chi_{\ga\mu}$, the co-factor matrix of $\nabla^2 u$ and employing Cramer's rule,
\begin{align*}
\det(\nabla^2 \hat u) \sum_{\mu=1}^{n}{a_{j\mu} \hat {u}^{\mu\ga}} &=  \sum_{\mu=1}^{n}{a_{j\mu}\chi_{\ga\mu}}\\
&= \sum_{\mu=1}^{n}{a_{j\mu}}\sum_{\substack{\sigma : \{1,\dotsc,\hat \mu, \dotsc ,n\} \\ \rightarrow \{1, \dotsc, \hat \ga, \dotsc n\}}}{(-1)^{\ga + \mu}sgn(\sigma)u_{1\sigma(1)}\dotsc u_{l-1 \sigma(l-1)}u_{l+1 \sigma(l+1)} \dotsc u_{n\sigma(n)}}\\
&=  \sum_{\mu=1}^{n}{a_{j\mu}} \sum_{j_1 =1}^{N} \dotsc \sum_{j_{n-1} = 1}^{N}\sum_{\substack{\sigma : \{1,\dotsc,\hat \mu, \dotsc ,n\} \\ \rightarrow \{1, \dotsc, \hat \ga, \dotsc n\}}}{(-1)^{\ga+\mu}sgn(\sigma)a_{j_1 1} a_{j_1 \sigma(1)}\dotsc a_{j_{n-1} n} a_{j_{n-1} \sigma(n)}} ,
\end{align*}

\noindent where the product in the above summation includes exactly two entries from all columns except the $\mu^{th}$ and the $\ga^{th}$ ones which have one entry each. Clearly, the innermost summation, as $\sigma$ runs over all permutations, is some determinant. More precisely,
\begin{align*}
\det(\nabla^2 \hat u) \sum_{\mu=1}^{n}{a_{j\mu} \hat {u}^{\mu\ga}} &=    \sum_{\mu=1}^{n}{a_{j\mu}} \sum_{j_1 =1}^{N} \dotsc \sum_{j_{n-1} = 1}^{N} {(-1)^{\ga+\mu}a_{j_1 1}\dotsc a_{j_{n-1}n}\tilde C_{j_1,\dotsc j_{n-1} ; \ga}}   .
\end{align*}
Again, like in the proof of the first claim, $\tilde C_{j_1,\dotsc j_{n-1} ; \ga} = 0$ unless the $j_i$'s are distinct. Also, if $j_1 < \dotsc j_{n-1}$ and $\tau$ any permutation of this indices, then $\tilde C_{\tau(j_1),\dotsc \tau(j_{n-1}) ; \ga} = \tilde sgn(\tau)C_{j_1,\dotsc j_{n-1} ; \ga}$. Thus,
\begin{align*}
det(\nabla^2 \hat u) \sum_{\mu=1}^{n}{a_{j\mu} \hat {u}^{\mu\ga}} &= \sum_{\mu=1}^{n}{a_{j\ga}} \sum_{j_1<\dotsc < j_{n-1}}\sum_{\tau}{(-1)^{\ga+ \mu}sgn(\tau)a_{\tau(j_1) 1}\dotsc a_{\tau(j_{n-1})n}\tilde C_{j_1,\dotsc j_{n-1} ; \ga}}\\
&=  \sum_{\mu=1}^{n}{a_{j\mu}} \sum_{j_1<\dotsc < j_{n-1}}\sum_{\substack{\tau: \{1,\dotsc ,\hat \mu, \dotsc n\}  \\ \rightarrow \{1,\dotsc ,\hat \mu, \dotsc n\}}}{(-1)^{\ga+\mu}sgn(\tau)a_{j_1 \tau(1)}\dotsc a_{j_{n-1}\tau(n)}\tilde C_{j_1,\dotsc j_{n-1} ; \ga}}\\
&= \sum_{j_1<\dotsc < j_{n-1}}\sum_{\mu=1}^{n}{(-1)^{\ga+\mu}a_{j\mu}\tilde C_{j_1,\dotsc j_{n-1} ; \mu} \tilde C_{j_1,\dotsc j_{n-1} ; \ga}}\\
&= (-1)^{\ga+1}\sum_{j_1<\dotsc < j_{n-1}}{\tilde C_{j,j_1,\dotsc j_{n-1}} \tilde C_{j_1,\dotsc j_{n-1} ; \ga}} .
\end{align*}

\end{proof}

\textit{\textbf {Proof of Lemma \ref{ggrad}}} We compute the derivative of $g_j$ using the correspondence $\nabla ^2 \hvarphi = (\nabla^2 \hat u)^{-1}$ and $x = \nabla \hvarphi$.
\begin{align*}
\frac{\partial g_j}{\partial \rho_\ga} &= \frac{l_j(\nabla(\varphi_\ga))}{l_j(\nabla \varphi)} = \frac{1}{l_j(\nabla \varphi)}\sum_{\mu=1}^{n}{v_{j\mu} \hat{u}^{\mu\ga}}= \frac{\sqrt{\be_j}}{\sqrt{l_j}}\sum_{\mu=1}^{n}{a_{j\mu} \hat{u}^{\mu\ga}}\\
&=  \frac{1}{det(\nabla^2 \hat u)}\sum_{\substack{j_1<\dotsc < j_{n-1} \\ j_i \neq j}}{\frac{1}{l_j}\frac{ C_{j,j_1,\dotsc j_{n-1}} C_{j_1,\dotsc j_{n-1} ; \ga}}{(\be_{j_1}l_{j_1})\dotsc (\be_{j_{n-1}}l_{j_{n-1}})}} \\
&= \frac{\sum_{\substack{j_1<\dotsc < j_{n-1} \\ j_i \neq j}}{\frac{1}{l_j}\frac{ C_{j,j_1,\dotsc j_{n-1}} C_{j_1,\dotsc j_{n-1} ; \ga}}{(\be_{j_1}l_{j_1})\dotsc (\be_{j_{n-1}}l_{j_{n-1}})}}}{\sum_{j_1< \dotsc <j_n}{\frac{C^2_{j_1,\dotsc j_n}}{(\be_{j_1}l_{j_1})\dotsc (\be_{j_n}l_{j_n})}}}\\
&\leq \frac{\sum_{\substack{j_1<\dotsc < j_{n-1} \\ j_i \neq j}}{\frac{1}{l_j}\frac{ C_{j,j_1,\dotsc j_{n-1}} C_{j_1,\dotsc j_{n-1} ; \ga}}{(\be_{j_1}l_{j_1})\dotsc (\be_{j_{n-1}}l_{j_{n-1}})}}}{\sum_{j_1< \dotsc <j_{n-1}}{\frac{C^2_{j,j_1,\dotsc j_{n-1}}}{(\be_jl_j)(\be_{j_1}l_{j_1})\dotsc (\be_{j_{n-1}}l_{j_{n-1}})}}} .
\end{align*}
\noindent The summation can be taken to be only over all $J = \{j_1,\dotsc j_{n-1}\}$ such that $C_{jJ} \neq 0$. For such terms, one has the trivial bound
\begin{equation*}
\left | \frac{C_{j_1,\dotsc j_{n-1} ; \ga}}{C_{j,j_1,\dotsc j_{n-1}}} \right | \leq M',
\end{equation*}

\noindent where $M'$ depends only on upper bounds on $|v_j|$ and $\be_j$ and a positive lower bound on $|C_J|$ as $J \subset \{1, \dotsc N\}$ varies over all subsets with $C_J \neq 0$. Together with the above computation, we get
\begin{equation*}
 \left | \frac{\partial g_j}{\partial \rho_\ga} \right | \leq \be_j \Lambda' .
\end{equation*}

\noindent This proves the Lemma with $\Lambda = \be_j \Lambda'$.

\end{proof}

\noindent \textbf{Proof of Proposition \ref{c^0 estimate}} There are several steps following \cite{WZ} and \cite{D2} combined with Lemma \ref{ggrad}. Let $\phi_{s} = \varphi_{s} - \tau\cdot \rho$, $\hat\phi = \hvarphi- \tau\cdot\rho$ and define
 \begin{equation}
 w_{s} = s\phi_{s} + (\al - s)\hat\phi .
 \label{w}
 \end{equation}
 \noindent Set \\
 \centerline{$m_{s} := \inf_{\mathbb{R}^n}w_{s} = w_{s}(\rho_{s})$}

 \medskip

\noindent \textbf{Step 1.} We claim that there exist $ C, \zeta >0$ independent of $s$ such that for all $s \in [s_0, \al]$,

\begin{align}
&(a) ~|m_{s}| \leq C \\
&(b)~ w_{s} \geq \zeta|\rho - \rho_{s}| - C . \label{key estimate}
\end{align}

It follows from the definition of $w_{s}$ that $\det(\nabla^2 w_{s}) \geq s^n \det(\nabla^2\phi_{s}) \geq s_0^n \det(\nabla^2\phi_{s})$. Set $K = \{m_{s}\leq w_{s} \leq m_{s} + 1\}$, $K_\mu = \{m_{s}\leq w_{s} \leq m_{s} + \mu\}$ and $V_\mu = Vol(K_\mu)$. From the equation, $\det(\nabla^2\phi_{s}) = e^{-w_{s} - c\cdot \nabla \varphi_s}$ and so on $K$,
\begin{align*}
\det(\nabla^2 w_{s}) &\geq s_0^n \det(\nabla^2\phi_{s})\\
& = s_0^n e^{-w_{s} - c\cdot \nabla\varphi_s}\\
& \geq Ce^{-m_{s}} ,
\end{align*}
\noindent where $C$ only depends on $s_0$ and $\sigma$ which is an upper bound for $|\nabla \varphi_s|$. So, Lemma \ref{conv anal lemma} applied to $v = w_{s} - m_{s}$ implies that $Vol(K) \leq Ce^{m_{s}/2}$. But $K_\mu \subseteq \mu K$, where by $\mu K$, we mean dilation with center $\rho_{s}$. So we have the volume estimate \\
$$V_\mu \leq C\mu^ne^{m_{s}/2}.$$
\noindent Now,
\begin{align*}
\nu^{-1} \leq Vol(X) &= \int_{\mathbb{R}^n}{\det(\nabla^2\phi_{s})\, d\rho} \\
&=\int_{\mathbb{R}^n}{e^{-w_{s}- c\cdot\nabla\varphi_s} \,d\rho}\\
&\leq Ce^{-m_{s}}\int_{0}^{\infty}{e^{-\mu}V_\mu \,d\mu}\\
&\leq Ce^{-m_{s}/2}
\end{align*}
\noindent and so $m_{s} \leq C(n,s_0,\nu, \sigma)$. For the lower bound, notice that $\nabla w_{s}(\mathbb{R}^n) = P - \tau $ and so $|\nabla w_{s}| \leq 2\sigma$. This implies that K contains a ball of radius $1/2\sigma$. But the volume of $K$ is bounded above by $Ce^{m_{s}/2}$ and so we immediately have a lower bound for $m_{s}$.  Hence (a) is proved with $C=C(n,s_0,\nu,\sigma)$.

Suppose now there exists a point $\rho \in K$ such that $|\rho-\rho_{s}| = R$. Because $B = B(\rho_{s},1/(2\sigma)) \subseteq K$, by convexity, the entire cone $\mathcal{\kappa}$ with vertex at $\rho$ and base as $B$ lies inside $K$. So, $Vol(K) \geq CR$, where $C$ depends only on dimension and $\sigma$. But $Vol(K) \leq Ce^{m_{s}/2}$ and so is less than some fixed constant $C$ by part (a). Hence $R$ is uniformly bounded. That is, there exists a uniform $R$ such that $K \subseteq B(\rho_{s},R)$. But then, convexity implies that $K_\mu \subseteq B(\rho_{s},\mu R)$. From this and the lower bound on $m_{s}$, it easily follows that
$$w_{s} \geq \frac{1}{R} |\rho-\rho_{s}| - C. $$

\medskip

\noindent This proves (b) with $\zeta = 1/R$.\\

\noindent \textbf{Step 2.} We now claim that, there exists uniform constant $C$ such that
\begin{equation}
|\rho_{s}| \leq C.
\label{min pt}
\end{equation}
\noindent We first observe,
\begin{align*}
0 &= \int_{\mathbb{R}^n}{\nabla(e^{-w_{s}}}) = -\int_{\mathbb{R}^n}{[s(\nabla \varphi_{s} -\tau) + (\alpha - s)(\nabla\hvarphi - \tau)]e^{-w_{s}} \,d\rho}\\
    &= -\int_{\mathbb{R}^n}(\alpha - s)(\nabla\hvarphi - \tau)]e^{-w_{s}},
\end{align*}
\noindent where we use the change of coordinates $x=\nabla \varphi_{s}(\rho)$  along with the equation $e^{-w_{s}}  = \det(\nabla^2\varphi_{s})e^{c\cdot \nabla\varphi_s}$ and the fact that $\vec{c}$ and $\tau$ are compatible to conclude that the first term is zero. This computation gives us the crucial identity
\begin{equation*}
\int_{\mathbb{R}^n}{(\nabla\hvarphi -\tau)e^{-w_{s}} \, d\rho} = 0,
\end{equation*}
\noindent or equivalently,
\begin{equation}
\frac{1}{\tilde V_s}\int_{\mathbb{R}^n}{(\nabla\hvarphi)e^{-w_{s}} \,d\rho} = \tau,
\label{identity}
\end{equation}
\noindent where $\tilde V_s$ is the weighted volume given by
\begin{equation*}
\tilde V_s = \int_{\mathbb R^n}{e^{-w_s} \, d\rho}  .
\end{equation*}
\noindent Note that when the Futaki invariant vanishes, this is precisely the identity in the paper of Wang and Zhu since in that case $\tau$ is the barycenter which is zero.

Suppose the claim is false i.e for all $M > 0$ there exists a pair $(s, \rho_s)$ with $|\rho_{s}|>M$. Applying $l_j$ to both sides of the identity \eqref{identity},
\begin{equation}
\frac{1}{\tilde V_s}\int_{\mathbb{R}^n}{l_j(\nabla\hvarphi)e^{-w_{s}} \,d\rho} = l_j(\tau) > \delta
\label{delta}
\end{equation}
\noindent for some $j$ and some $\delta>0$. Fix an $\epsilon > 0$. From the estimates in the previous step there exists an $R_\epsilon > > 1$ such that
\begin{equation}
\int_{\mathbb{R}^n \backslash B(\rho_{s} , R_\epsilon)}{e^{-w_{s}} \,d\rho} \leq \epsilon.
\label{est. 1}
\end{equation}
Recall that as $\rho$ goes to infinity, the image under $\nabla \hvarphi$ goes to the boundary of $P$. So, by hypothesis, on can choose a big $M > > 1$ such that $|\rho_{s}| > M$ and $\log{(l_j(\nabla \hvarphi (\rho_{s})))} < -M$ for some $s$ and some face $l_j$.  By the gradient estimate in Lemma \ref{ggrad} there exists a constant $\Lambda$ (which does not depend on $s$) such that on $B = B(\rho_{s}, R_\epsilon)$
\begin{equation}
\log{(l_j(\nabla \hvarphi (\rho)))} < -M + \Lambda R_\epsilon < \frac{-M}{2} < \log \epsilon
\label{est. 2}
\end{equation}
\noindent for $M$ sufficiently big. So combining \ref{est. 1} and \ref{est. 2} we estimate the integral in \ref{delta},
\begin{align*}
\frac{1}{\tilde V}\int_{\mathbb{R}^n}{l_j(\nabla\hvarphi)e^{-w_{s}}} &= \frac{1}{\tilde V}\int_{B}{l_j(\nabla\hvarphi)e^{-w_{s}}} + \frac{1}{\tilde V}\int_{\mathbb{R}^n \backslash B}{l_j(\nabla\hvarphi)e^{-w_{s}}} \\
\vspace{0.7in}
&\leq \epsilon + C\epsilon,
\end{align*}
\noindent where $C$ only depends on an upper bound for the image of $P$ under $l_j$ and a lower bound for the total volume of $X$. Now choose $\epsilon$ small enough so that $\epsilon + C\epsilon < \delta/2$. But then, this contradicts (\ref{delta}), completing Step 2.\\

\noindent \textbf{Step 3.} We first observe the elementary identity from convex analysis
\begin{equation}
\sup_{\mathbb{R}^n}{|\varphi_{s} - \hvarphi|} = \sup_{P}{|u_{s} - \hat u|}.
\end{equation}
\noindent So, to complete the proof, one only needs to control the $C^0$ norm of $u_{s}$ since from the definition it is easy to see that the bound for $\hat u$ only depends on $\be_j$ and an upper bound on $l_j$.
From (\ref{key estimate}) and (\ref{min pt}),
\begin{equation}
w_{s}(\rho) \geq \zeta |\rho| - C .
\label{w estimate}
\end{equation}

\noindent Let $u_{s}$ be the Legendre transform of $\varphi_{s}$, then for any $p>n$,
\begin{align*}
\int_{P}{|\nabla u_{s}|^p\,dx} &= \int_{\mathbb{R}^n}{|\rho|^p e^{-w_{s} -c\cdot\nabla \varphi_s}\,d\rho}\\
&\leq C\int_{\mathbb{R}^n}{|\rho|^p e^{-\zeta |\rho|}\,d\rho}\\
&\leq C(p).
\end{align*}

\noindent By Morrey's inequality  $osc_{\bar P}u_{s} < C$ for some $C$ independent of $s$. Now, if we set $x_{s} = \nabla\varphi_{s}(\rho_{s})$,  then,
$$u_{s}(x_{s}) = \rho_{s} \cdot x_{s} - \varphi_{s}(\rho_{s}).$$

\medskip

\noindent The first term is clearly bounded from Step-2. Moreover by Step-1, $w_{s}(\rho_{s})$ is bounded. Since $\rho_s$ stays bounded, there is a uniform bound on $\hvarphi(\rho_s)$, which in turn gives a uniform bound on $\varphi_s(\rho_s)$.  This shows that $|u_{s}(x_{s})|$ is uniformly bounded. Hence the oscillation bound implies
$$|u_{s}|_{C^0(P)} \leq C.$$

\medskip

\noindent This completes the proof of the proposition.

\qed

 \noindent We are now in a position to prove Theorems \ref{theorem-1} and \ref{theorem-2}.

\bigskip

\noindent  \textit{\textbf{Proof of Theorem \ref{theorem-1}.} }

\noindent \textit{\textbf{Step 1.}} We first characterize the invariant $\mathcal{S}(X,L)$ in terms of the polytope data as follows - The polytope for the linear system $|-K_X - \al L|$ can be taken to be $P^{\al} = \{x \in \mathbb{R}^n ~|~ l^{\al}_{j} = v_j \cdot x + 1 - \al\lambda_j\}$. For any $\al > 0$ and any $j$,
\begin{align*}
 \tau \in P \hspace{0.05in} \text{with} \hspace{0.05in} 1 - \al l_j(\tau) \geq 0 \\
 \Leftrightarrow 0 \leq 1 - \al l_j(\tau) \leq 1  \\
 \Leftrightarrow 0 \leq v_j\cdot (-\al \tau) + 1 -\al\lambda_j \leq 1\\
 \Leftrightarrow 0 \leq l^{\al}_j(-\al\tau) \leq 1 .
 \end{align*}

But then divisor $D = \sum{l^{\al}_j(-\al \tau) D_j}$ is an effective divisor in $|-K_X - \al L|$ with coefficients less than 1.

 \medskip

\noindent \textit{\textbf{Step 2.}}  We next outline a proof of the existence of solutions to the soliton equation. Let $S = \{s \in [0,\al] | \exists$ a solution $\psi \in C^{3,\gamma}_{\be(\al)}$ to eqn. \ref{ma s} $\}$. By proposition \ref{calabi-yau}, $0 \in S$ and hence $S$ is nonempty. We now need to show that $S$ is both open and closed.\\

\noindent \textit{Openness}- The linearized operator for equation \ref{ma s} is $L_s = \Delta_s + \xi + sI$. Since $[D] \geq 0$, $Ric(\omega_s) > s\omega_s + \mathcal{L}_{\xi}\omega_s$. By lemma \ref{linear theory} all eigenvalues of $-L_s$ are strictly positive and hence the Fredholm alternative implies that $L_s$ is invertible. Implicit function theorem then implies that $S$ is open.\\

\noindent{\textit{$C^0$ estimates}}- Since there is a solution at $s = 0$ by openness there exists an $s_0$ such that there is a solution on $[0,s_0]$. With this choice of $s_0$, by proposition \ref{c^0 estimate} there exists a constant $C$ independent of $s$ such that

\begin{equation*}
 |\psi_s| = |\varphi_s - \hvarphi| \leq C .
\end{equation*}

\noindent\textit{$C^2$ and higher order estimates} - Once the uniform bound is obtained, the argument for the second and higher order estimates is the same as that in the proof of proposition \ref{calabi-yau}. Hence the upshot is that $S$ is nonempty, open and closed. Hence $S = [0,\al]$ and in particular $\al \in S$. This completes the proof of the second part of the theorem.

\medskip

\noindent \textit{\textbf{Step 3.}}  Finally, to complete the proof of Theorem \ref{theorem-1}, we now prove that  $\mathcal{R}_{BE} (X,L)= \mathcal{S}(X, L) $.
From the existence part of the theorem, it is easy to see that $\mathcal{R}_{BE}(X,L) \geq \mathcal{S}(X,L)$. In order to prove the reverse inequality, let $\alpha \in (0, \mathcal{R}_{BE})$. Then by definition, there exist smooth toric $\be$-conical metrics $\omega= \ddbar \varphi$ and $\eta= \ddbar \psi $, and a holomorphic vector field $\xi$ vector $\tau\in \mathbb{R}^n$, such that
\begin{equation*}
Ric(\omega) = \al\omega + \mathcal{L}_{\xi}\omega + \eta + [D]
\end{equation*}

\noindent for some smooth conical K\"ahler metric $\eta$ and some effective divisor $D$. Note that the volume form can be expressed as
\begin{equation*}
\omega^n = \frac{\Omega}{\prod_{j=1}^{N}{|s_j|^{2(1-\be_j)}_{h_j}}}
\end{equation*}
\noindent for some global volume form $\Omega$ with $\log{\Omega}$ bounded.  From this, it is clear that the divisor is given by
\begin{equation*}
D = \sum_{j=1}^{N}{(1-\be_j)D_j}
\end{equation*}
and consequently one can take the polytope for $\eta$ to be
\begin{equation*}
P^{\eta} = \{x \in \mathbb{R}^n | l_j^{\eta} = v_j\cdot x + \be_j -\al\lambda_j > 0 \hspace{0.05in} j = 1,\dotsc,N\} .
\end{equation*}

Locally on $(\mathbb{C}^*)^n$, $\omega = \ddbar\varphi$ and $\eta = \ddbar\psi$, and the corresponding real Monge-Ampere equation reads
\begin{equation*}
\det \nabla^2 \varphi = e^{-\alpha \varphi - \psi - c \cdot  \nabla \varphi - \tau\cdot \rho}
\end{equation*}
for some $\tau \in \mathbb{R}^n$. As before, we take $\varphi $ so that $\nabla \varphi (\mathbb{R}^n) = P$. Furthermore we normalize $\psi$ so that  $\nabla\psi(\mathbb{R}^n) = P^{\eta}$. With this normalization, we claim that $\tau = 0$.

Since $\nabla \varphi$ is bounded, It suffices to prove that
\begin{equation}
|\log{\det \nabla^2\varphi} + \al\varphi + \psi|
\label{tau}
\end{equation}
\noindent is bounded. Let $\overline \varphi = \al\varphi + \psi$ be the potential for the smooth $\be$-conical metric $\overline\omega = \al\omega + \eta$. Then $\overline\omega^n/\omega^n$ is a global bounded function. This is because both the metrics have the same poles at the divisors. Consequently it is enough to show that
\begin{equation*}
|\log{\det \nabla^2\overline\varphi} + \overline\varphi|
\end{equation*}
\noindent is bounded. But, as in the proof of Lemma \ref{cone angles},
\begin{align*}
|\log{det \nabla^2\overline\varphi} + \overline\varphi| &\leq |\sum_{j=1}^{N}{\Big (1 + \frac{x\cdot v_j}{\be_j}\Big)\log {\overline l_j} } | + C\\
&\leq  |\sum_{j=1}^{N}{\Big (1 - \frac{\overline l_j(0)}{\be_j}\Big)\log {\overline l_j} } | + C \\
&\leq C,
\end{align*}
\noindent where $\overline l_j(x) = v_j\cdot x + \be_j$ and we used the fact that $\overline l_j \log{\overline l_j}$ is a bounded function in the second line. Note that the polytope for $\overline \varphi$ is given precisely by the intersection of $\overline l_j > 0$ for $j=1,\dotsc , N$. This completes the proof of \eqref{tau} and hence proves the claim that $\tau = 0$. But then using the integration by parts trick from the proof of Lemma \ref{cone angles}
\begin{align*}
0 = \int_{\mathbb{R}^n} \nabla ( e^{-\alpha \varphi - \psi} ) d \rho = - \alpha \int_P x e^{c\cdot x} dx - \int_P \nabla \psi e^{c\cdot x} dx.
\end{align*}
\noindent So, if we set
$$\bar \tau = \frac{\int_P x e^{c\cdot x} dx}{\int_P e^{c\cdot x} dx} = \frac{ - \int_P \nabla \psi e^{c\cdot x} dx}{\alpha \int_P e^{c\cdot x} dx} .$$
\noindent Obviously, $\bar \tau \in P$ and applying $l_j$ , we have
$$1- \alpha  l_j( \bar\tau) = \frac{\int_P( 1+  v_j\cdot \nabla \psi  - \alpha \lambda_j) e^{c\cdot x}dx }{ \int_P e^{c\cdot x} dx} \geq 1- \beta_j \geq 0.$$
\noindent where we used the definition of $P^{\eta}$ for the first inequality and $D \geq 0$ for the second inequality. Hence $\al < \mathcal{S}(X.L)$ and this completes the proof of the theorem. \qed

\bigskip

\noindent \textit{\textbf{ Proof of Theorem \ref{theorem-2}.}}  This theorem follows directly from Theorem \ref{theorem-1} by taking $\tau = P_C$. For uniqueness we refer to results of Berndtsson \cite{B}. The only slightly subtle point is the existence of conical K\"ahler-Einstein metrics for $\al = \mathcal{R}(X.L)$. This follows from the fact that barycenter always stays in the interior of the polytope and hence Proposition \ref{c^0 estimate} also holds for this choice of $\al$
 (Contrast this, for instance, with the case when $\al = \mathcal{S}(X,L)$ as discussed in Example \ref{closedness at S} below).  All the higher order estimates then follow from the $C^0$ bound exactly as in the proof above.

\section{Connectedness of the space of toric conical K\"ahler-Einstein metrics}

We will prove Theorem \ref{theorem-3} in this section.

\subsection{Reducing the proof of Theorem \ref{theorem-3} to the case of one blow-up} Let us fix a toric manifold $Y$ with an ample toric line bundle $\mathcal{L}$ and corresponding polytope $P$. Let  $X$ be the blow-up  of $Y$ along a $k$-dimensional smooth toric variety $V$ with $\pi:X\rightarrow Y$ as the blow-down map. Set $L_t = \pi^{*}\mathcal{L} + tA$ for some ample toric line bundle $A$ on $X$ and for $t \in [0,1]$.  Recalling the definition of the invariant $\mathcal{R}$, we make the following simple observation

\begin{lemma}

 Let $(X_t,L_t)$ be a family of toric manifolds with ample $\mathbb{R}$-line bundles $L_t$ for $t\in (0, 1]$. Then as long as the corresponding polytopes $P_t$ stay bounded, we have
\begin{equation} \label{properties of R}
\inf_{t}{\mathcal{R}(X_t,L_t)} > 0.
\end{equation}
\end{lemma}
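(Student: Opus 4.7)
The plan is to invoke Theorem \ref{theorem-2} to reduce the claim to a uniform upper bound on the defining affine forms evaluated at the barycenters. By the polytope formula in Theorem \ref{theorem-2},
$$\mathcal{R}(X_t,L_t)=\sup\{\al>0\ :\ 1-\al\, l_{j,t}(P_{C,t})>0,\ j=1,\ldots,N_t\},$$
where $l_{j,t}(x)=v_{j,t}\cdot x+\lambda_{j,t}$ are the affine forms defining $P_t$ and $P_{C,t}$ is its barycenter. Since $P_{C,t}$ lies in the interior of the convex polytope $P_t$, each value $l_{j,t}(P_{C,t})$ is strictly positive, and the supremum can be rewritten as
$$\mathcal{R}(X_t,L_t)=\min_{1\le j\le N_t}\frac{1}{l_{j,t}(P_{C,t})}.$$
Consequently, proving $\inf_t\mathcal{R}(X_t,L_t)>0$ is equivalent to producing a constant $C$ independent of $t$ such that
$$\sup_{t\in(0,1]}\max_{j}\, l_{j,t}(P_{C,t})\ \le\ C.$$

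To obtain such a $C$, first note that the assumption that the $P_t$ stay bounded means they are contained in a common ball $B_R(0)\subset\mathbb{R}^n$; in particular the barycenters $P_{C,t}$ all lie in $B_R(0)$. In the settings where this lemma is actually used, namely the deformation $L_t=\pi^{*}\mathcal{L}+tA$ on the fixed blow-up $X$ appearing just before the statement, the fan of $X_t$ is independent of $t$, so the primitive integral normals $v_{j,t}=v_j$ are fixed and only the scalars $\lambda_{j,t}$ vary; moreover, since $L_t$ depends affinely on $t$, so do the $\lambda_{j,t}$, and they are bounded on $[0,1]$. This immediately yields
$$l_{j,t}(P_{C,t})\ \le\ |v_j|R+\max_{t\in[0,1]}|\lambda_{j,t}|\ \le\ C,$$
and therefore $\mathcal{R}(X_t,L_t)\ge 1/C>0$ uniformly in $t$.

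The only point that takes some care is ensuring that the hypothesis ``$P_t$ stays bounded'' really delivers bounded affine data $(v_{j,t},\lambda_{j,t})$. In every application in this paper, combinatorial type of the polytope is constant along the family (it changes only at the finitely many transition values coming from Theorem \ref{factorization theorem}), so the $v_{j,t}$ are literally $t$-independent and the $\lambda_{j,t}$ vary affinely; in a purely abstract setting one would additionally need a uniform bound on the normals, which is why it is natural to state the lemma for a fixed combinatorial type. No further analytic difficulty is anticipated: the lemma is essentially a direct packaging of the explicit formula in Theorem \ref{theorem-2}.
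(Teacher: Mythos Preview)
Your proof is correct and follows essentially the same route as the paper: both invoke the polytope formula from Theorem \ref{theorem-2} to write $\mathcal{R}(X_t,L_t)=\min_j 1/l_{j,t}(P_{C,t})$ and then argue this is bounded below because the polytopes stay bounded. You add the useful observation---which the paper leaves implicit---that ``$P_t$ bounded'' only controls $l_{j,t}(P_{C,t})$ once the normals $v_{j,t}$ are themselves bounded, and that this is automatic in the actual applications since the fan (hence the combinatorial type) is fixed along each piece of the family.
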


\begin{proof}
By Theorem \ref{theorem-1}
\begin{align*}
\mathcal{R}(X,L) &= \sup{\{\al | 1- \al l_j(P_C) > 0,~ j = 1,\dotsc,N\}}\\
&= \inf_{j}{\{\frac{1}{l_j(P_C)}\}},
\end{align*}
\noindent which stays bounded away from zero if the polytopes stay bounded.
\end{proof}

 For $t\in[0,t_0]$ we can now choose a continuous path $\al_t$ such that
\begin{equation*}
0 < \al_t < \min{(R(X,L_t), R(Y,\mathcal{L}))}
\end{equation*}
and let $\omega_t$ be the unique toric conical K\"ahler-Einstein metrics in $c_1(L_t)\cap\mathcal{K}_c(X)$ with Einstein constant $\alpha_t$. We also let $\omega_Y\in c_1(\mathcal{L})\cap \mathcal{K}_c(Y)$ be the the toric conical K\"ahler-Einstein metric on $Y$ with Einstein constant $\alpha_0$. Denoting the corresponding Riemannian metrics by $g_t $ and $g_Y$, we have the following proposition.

\begin{proposition}$(X,g_t)$ is a continuous path in the Gromov-Hausdorff topology and
$$(X,g_t) \xrightarrow{t\rightarrow 0} (Y,g_Y).$$
\label{one blow-up}
\end{proposition}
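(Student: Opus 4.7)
The plan is to obtain $t$-uniform bounds on the toric potentials $\varphi_t$ via the family $C^{0}$ estimate of Proposition~\ref{c^0 estimate}, upgrade these to smooth subsequential convergence on the open orbit $(\CC^*)^n$, and finally invoke the comparison geometry of Section~2.3 to pass to Gromov-Hausdorff convergence on all of $X$.

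First, I would record the polytope data. Let $P_t$ denote the Delzant polytope of $(X,L_t)$ for $t\in(0,t_0]$ and $P_0$ that of $(Y,\mathcal{L})$. Since $L_t = \pi^{*}\mathcal{L} + tA$, $P_t$ is obtained from $P_0$ by chopping off a toric corner to depth proportional to $t$; hence the defining functions $l_{j,t}(x) = v_{j,t}\cdot x + \lambda_{j,t}$ depend affinely on $t$ and the quantities $\mathrm{Vol}(P_t)$, $\mathrm{diam}(P_t)$, and $\sup_{j,P_t}|l_{j,t}|$ stay in a fixed compact range for $t\in[0,t_0]$. The Einstein constants $\alpha_t$ are bounded below by $\inf_t \mathcal{R}(X,L_t) > 0$ from the lemma above, and the barycenters $P_{C,t}$ lie uniformly in the interior of $P_t$. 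Taking $\tau = P_{C,t}$ in Theorem~\ref{theorem-2} and applying Proposition~\ref{c^0 estimate} to the continuity method producing $\omega_t$ then yields a uniform $C^{0}$ bound on $\varphi_t - \hat\varphi_t$ on $(\CC^*)^n$, with constants independent of $t$.

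Next, I would promote this to smooth subsequential convergence. Lifting the Monge-Amp\`ere equation to the $\beta(\alpha_t)$-cover near each vertex of $P_t$ turns it into a uniformly non-degenerate complex Monge-Amp\`ere equation, and the Yau/Calabi/Tian-Zhu type second and third order arguments used in the proof of Proposition~\ref{calabi-yau} yield uniform $C^{k,\gamma}_{\beta(\alpha_t)}$ bounds on every toric chart. Together with the curvature bounds of Lemma~\ref{curvature bound}, this produces smooth subsequential convergence of $\omega_t$ on compact subsets of $(\CC^*)^n$. For a limit $t_{*}>0$, uniqueness of toric conical K\"ahler-Einstein metrics (Berndtsson) forces every subsequential limit to equal $\omega_{t_{*}}$; for $t_{*} = 0$ the polytopes $P_t$ Hausdorff-converge to $P_0$ and the same uniqueness identifies the limiting symplectic potential with $u_Y$. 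Hence on $\pi^{-1}((\CC^*)^n) \cong (\CC^*)^n$ the metrics $g_t$ converge smoothly to $\pi^{*}g_Y$ as $t \to 0$.

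Finally, I would upgrade smooth convergence on the open orbit to Gromov-Hausdorff convergence on all of $X$ by invoking the comparison theorems of Section~2.3. The uniform lower bound $\mathrm{Ric}(\omega_t) \geq \alpha_0\,\omega_t$ in the smooth locus produces uniform diameter bounds via Myers' theorem, and relative volume comparison gives non-collapsing at every point. For $t_{*}\in(0,t_0]$ the toric divisors play no special role, so combining smooth convergence on compact subsets of $(\CC^*)^n$ with volume comparison applied to tubular neighborhoods of $\cup_j D_j$ yields GH continuity at $t_{*}$. The main obstacle is the degenerate limit $t_{*} = 0$: one must show that the exceptional divisor $E \subset X$ Gromov-Hausdorff-collapses onto the toric center $V \subset Y$. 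For this, observe that the normal directions to $E$ in $L_t|_E$ are scaled by $t$, so $\mathrm{Vol}_{g_t}(E) = O(t)$; applying Gromov's Lemma~\ref{grom} to a thin tubular neighborhood of $E$ then forces $\mathrm{diam}_{g_t}(\pi^{-1}(v)) \to 0$ for each $v \in V$. Combined with smooth convergence off $E$ and continuity of $\pi$, this yields $(X, g_t) \xrightarrow{\mathrm{GH}} (Y, g_Y)$, completing the proof.
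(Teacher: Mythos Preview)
Your overall strategy---uniform $C^0$ via the family estimate of Proposition~\ref{c^0 estimate}, interior higher-order estimates on $(\CC^*)^n$, then comparison geometry to upgrade to Gromov--Hausdorff---is the same as the paper's.  There is, however, a real gap in the final step, and a secondary difference worth noting.

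\medskip

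\textbf{The Gromov--Hausdorff step at $t\to 0$.}  Your use of Lemma~\ref{grom} is not correct.  Gromov's lemma does \emph{not} say that small volume of a tube forces the diameter of its fibers to collapse; it says that if every minimal geodesic from $p_1$ to points of $B(p_2,\epsilon)$ crosses $\partial E$, then $\mathrm{Vol}(B(p_2,\epsilon))\le C\,\mathrm{Vol}(\partial E)$.  The paper uses this in the contrapositive to produce, for any $p,q$ in a fixed compact set $A\subset X\setminus \tilde E$, a $g_t$-minimal geodesic from $p$ to some $\tilde q\in B_{g_t}(q,\epsilon/4)$ that stays entirely in $X\setminus \tilde T$.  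Since on $X\setminus\tilde T$ the metrics $g_t$ and $\pi^*g_Y$ are uniformly close, this gives the lower bound $d_t(p,q)\ge d_Y(p,q)-\epsilon$.  The upper bound $d_t\le d_Y+\epsilon$ comes from the geodesic convexity of $Y\setminus D$ (Lemma~\ref{lemma:geod}), and the $\epsilon$-density of $A$ in $(X,g_t)$ from volume comparison.  These three ingredients together give $d_{GH}((X,g_t),(Y,g_Y))<3\epsilon$.  Your sketch (``$\mathrm{Vol}_{g_t}(E)=O(t)$ plus Gromov's lemma forces $\mathrm{diam}_{g_t}(\pi^{-1}(v))\to 0$, combine with smooth convergence off $E$'') does not supply the two-sided distance comparison on $A$, and the fiber-collapse assertion itself does not follow from the lemma as stated.

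\medskip

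\textbf{Reference metrics and higher-order estimates.}  The paper switches from the conical reference $\hat\omega_t$ to a \emph{smooth} degenerating reference $\tilde\omega_t=\pi^*\tilde\omega_Y+t\chi$, gets a uniform oscillation bound on $\psi_t$ via Kolodziej, and then runs Tsuji's trick (a Yau-type $C^2$ estimate with a weight $|s_N|^{2(1+A\epsilon)}\prod|s_j|^2$) against the fixed K\"ahler metric $\eta=\pi^*\tilde\omega_Y+\epsilon\,\ddbar\log\xi_N$ to get uniform $C^2$ bounds on compact subsets of $X\setminus D$.  Your route through the $\beta(\alpha_t)$-covers is problematic because the covers themselves vary with $t$ (and $\beta_N^t$ need not stay below $1$ as $t\to 0$; cf.\ Lemma~\ref{lemma for beta}), so uniform $C^{k,\gamma}_{\beta(\alpha_t)}$ bounds do not translate into bounds in a fixed coordinate system.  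If you want to avoid the paper's smooth-reference setup, the cleanest fix is to work directly with the real Monge--Amp\`ere equation $\det\nabla^2\varphi_t=e^{-w_t}$ on $\RR^n$: your uniform $C^0$ bound on $w_t$ plus interior Evans--Krylov gives uniform $C^{2,\gamma}$ on compacta of $\RR^n$, then bootstrap.
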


By restricting $\al_t$ to be less than $R(X,L_t)$ we ensure that $g_t$ are geodesically convex, thus facilitating the application of tools from comparison geometry. In particular, we will make use of lemma \ref{gromov's lemma}.
Taking the above proposition for granted, we now prove Theorem \ref{theorem-3}\\

\noindent \textit{\textbf{Proof of Theorem \ref{theorem-3}.}}  We fix $(X_j,\omega_j)$ for $j = 0,1$ as in the statement of the theorem and we let $L_j$ be the K\"ahler class of $\omega_j$ and $\alpha_j < R(X,L_j)$, and we call $(L_j,\omega_j,\al_j)$ compatible triples on $X_j$. By the factorization theorem \ref{factorization theorem}, there exist a sequence $0=t_0 < t_1 <\dotsc < t_k=1$ and pairs $(X_{t_i},f_{t_i})$ such that
$$ X=X_0 \stackrel{f_{t_1}}{\dashrightarrow }X_{t_1}\stackrel{f_{t_2}}{\dashrightarrow }\cdots\stackrel{f_{t_i}}{\dashrightarrow }X_{t_i} \stackrel{f_{t_{i+1}}}{\dashrightarrow }\cdots \stackrel{f_{t_k}}{\dashrightarrow }X_{t_k}=X_1 . $$
\noindent We start from the left and construct the family of metrics inductively. Suppose we are at stage $t_i$ i.e we have already constructed $(X_{t_i}, L_{t_i},\al_{t_i},\omega_{t_i})$. Then there are two cases \\

\noindent \textit{\textbf{Case-1}} - $f_{t_{i+1}}$ is a blow-down map. \\

\noindent   On $X_{t_{i+1}}$, we  take an arbitrary choice of compatible triples $(L_{t_{i+1}}, \al_{t_{i+1}}, \omega_{t_{i+1}})$. Then we connect this to $(X_{t_i}, L_{t_i},\al_{t_i},\omega_{t_i})$ in two steps. Fix a $ \mu \in (t_i,  t_{i+1})$ and ample line bundle $A$ on $X_{t_i}$.

\noindent{\textit{Step-1}} For $t\in [\mu,t_{i+1}]$, set
\begin{align*}
X_t &= X_{t_i}\\
 L_t &= f_{t_{i+1}}^*L_{t_{i+1}} + (t_{i+1}-t)A\\
\al_t&< \min({R(X_t,L_t),R(X_{t_{i+1}},L_{t_{i+1}})})
\end{align*}
\noindent where we choose $\al_t$ to be continuous. We now let $\omega_t$ be the $\al_t$ - conical K\"ahler-Eisntein metric in $L_t$. Then by Proposition \ref{one blow-up}, $(X_t,\omega_t)$ is continuous for $t\in[\mu,t_{i+1})$ and converges to $(X_{t_{i+1}},\omega_{t_{i+1}})$ in the Gromov-Hausdorff topology.

\noindent \textit{Step-2} - For $t\in[t_i,\mu]$ set
\begin{align*}
X_t &= X_{t_i}\\
L_t &= \frac{\mu - t}{\mu - t_i}L_{t_i} + \frac{t-t_i}{\mu-t_i}L_\mu\\
\al_t &< R(X_t,L_t)
\end{align*}
\noindent Again, let $\omega_t$ be the corresponding $\al_t$ - conical K\"ahler-Einstein metrics. Since in this case, $L_t$ is uniformly K\"ahler all the estimates of Proposition \ref{one blow-up} go through and we in fact get that $(X_{t_i},\omega_t)$ is continuous in $t$ in the smooth topology on $X_{t_i}$.

%
%

\medskip
\noindent{\textit{\textbf{Case 2.  $f_{t_{i+1}}$ is a blow-up map. }}}   This can be treated by the same argument as in Case 1 by moving $t$ backward from $t_{i+2}$ to $t_{i+1}$.

\medskip

The smoothness of $g_t$ on the complex torus $(\mathbb{C^*})^n$ follows if we take $\alpha_t$ to be a smooth path in $t$.

\qed

\subsection{Uniform estimates and proof of proposition \ref{one blow-up}} In this section we prove Proposition \ref{one blow-up}, thereby completing the proof of Theorem \ref{theorem-3}. Throughout the section we fix an $\al>0$, such that $\alpha \in (0, \min (R(X, L_t), R(Y, \mathcal{L})))$.

Without loss of generality, we can assume that  the polytope $P$ that induces the toric manifold $Y$ is given by $(N-1)$ defining functions $l_j(x) = v_j\cdot x + \lambda_j \geq 0$, $j = 1, \dotsc, N-1$.  Let $P^{A}$ be the poytope corresponding to the ample line bundle $A$ on $X$ with $N$ defining functions $l^A_j(x) = v_j\cdot x + \lambda^A_j$ for $j=1,\dotsc,N$. The blow-up process corresponds to the $(n-1)-$dimensional face given by $l_N$  contracting to a $k$-dimensional face given by the intersection of $(n-k)$ co-dimension one faces, say $l_1,\dotsc ,l_{n-k}$. We denote the divisor corresponding to $l_j$ on $X$ by $D_j$ with defining section $s_j$, while on $Y$ we denote the divisor corresponding to $l_j$ by $\tilde D_j$ and the corresponding section by $\tilde s_j$. Then it follows from the definition of blow-ups that
\begin{equation}
\begin{cases}
 \pi^*\tilde D_j = D_j + D_N, \hspace {0.2in} j=1,\dotsc ,n-k, \\
 \pi^*\tilde D_j = D_j, \hspace {0.65 in}  j=n-k+1,\dotsc,N-1,
 \label{total transform}
\end{cases}
\end{equation}
\noindent where as before $D_j$ denotes the divisor corresponding to $l_j$ and $\pi^*$ is the total transform. Using this fact, one can explicitly write down the polytope $P^t$ for $L_t$ by defining
\begin{equation*}
\begin{cases}
l^t_j(x) = v_j\cdot x + \lambda_j + t\lambda^A_j, \hspace {1.25in} j=1,\dotsc ,N-1,  \\
l^t_N(x) = v_N\cdot x + (\sum_{j=1}^{n-k}{\lambda_j}) + t\lambda^A_N,
\end{cases}
\end{equation*}
where $v_N=\sum_{j=1}^{n-k} v_j.$

We denote the barycenters of the evolving polytopes by $P_C^t$ and the corresponding angles by $\be_j^t = \al l_j^t(P_C^t)$. We then set $l_j^0, P_C^0$ and $\be_j^0$ to be the limit of the respective quantities as $t$ goes to zero. We first prove an important identity that will be very useful, among other things, in proving that the limiting Monge-Ampere equation descends to the Einstein equation on Y.

\begin{lemma}
\begin{equation}
 (1-\be_N^0) -\sum_{j=1}^{n-k}{(1-\be_j^0)} =-( n-k-1) .
\label{eq for beta}
\end{equation}
\label{lemma for beta}
\end{lemma}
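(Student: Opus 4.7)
The plan is to reduce the claimed identity to a purely combinatorial relation among the affine functions $l_j^0$ defining the limit polytope, namely the relation $l_N^0 = \sum_{j=1}^{n-k} l_j^0$ that encodes the fact that the exceptional divisor $D_N$ on $X$ contracts to the intersection of the divisors $D_1, \ldots, D_{n-k}$ on $Y$.

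First I would set $t=0$ in the explicit formulas for the defining functions $l_j^t$ given just before the lemma. For $j=1,\ldots,N-1$ this immediately yields $l_j^0(x)=v_j\cdot x+\lambda_j$, while for the exceptional face one gets
\[
l_N^0(x)=v_N\cdot x+\sum_{j=1}^{n-k}\lambda_j.
\]
Using the normal-vector identity $v_N=\sum_{j=1}^{n-k}v_j$ (which is also explicit in the formulas just above the statement), this rewrites as
\[
l_N^0(x)=\sum_{j=1}^{n-k}\bigl(v_j\cdot x+\lambda_j\bigr)=\sum_{j=1}^{n-k}l_j^0(x).
\]

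Next I would evaluate this identity at the limiting barycenter $P_C^0$ and multiply both sides by $\alpha$. Using the definition $\beta_j^0=\alpha l_j^0(P_C^0)$, this gives the key relation
\[
\beta_N^0=\sum_{j=1}^{n-k}\beta_j^0.
\]
Substituting into the left-hand side of \eqref{eq for beta} yields
\[
(1-\beta_N^0)-\sum_{j=1}^{n-k}(1-\beta_j^0)=1-\beta_N^0-(n-k)+\sum_{j=1}^{n-k}\beta_j^0=1-(n-k)=-(n-k-1),
\]
as required.

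There is no real obstacle here; the whole content of the lemma is the linear dependence $l_N^0=\sum_{j=1}^{n-k}l_j^0$ among the $t=0$ defining functions, which is just the toric-geometric statement that, in the limit, the exceptional divisor is identified with the transverse intersection of the $n-k$ divisors being blown up. The rest is bookkeeping. The only thing worth double-checking is that the limits $\beta_j^0=\lim_{t\to 0}\alpha l_j^t(P_C^t)$ commute with the combinatorial identity, which is immediate since each $l_j^t$ depends continuously (in fact linearly) on $t$ and $P_C^t$ is continuous in $t$ as long as the polytopes $P^t$ stay bounded, an assumption we are working under throughout this subsection.
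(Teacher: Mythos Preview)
Your proof is correct and follows essentially the same route as the paper: both reduce the identity to the linear relation $l_N^0=\sum_{j=1}^{n-k}l_j^0$ and then read off $\beta_N^0=\sum_{j=1}^{n-k}\beta_j^0$. The only cosmetic difference is that you obtain this relation directly from the explicit formula for $l_N^t$ given just before the lemma, whereas the paper phrases it geometrically (the $n-k+1$ affine functions vanish on a common $k$-dimensional face, forcing a linear dependence with coefficients $1$ by linear independence of the $v_j$); the content is the same.
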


\begin{proof}
Since $D_N$ is obtained by blowing up the intersection of $D_1,\dotsc, D_{n-k}$, it is well known that
\begin{equation*}
v_N = \sum_{j=1}^{n-k}{v_j} .
\end{equation*}
\noindent Now at $t=0$, there are $(n-k+1)$ affine linear functions $l_1,\dotsc ,l_{n-k}$ and $l_N$ vanishing on a $k$- dimensional face (see the figure given below). So they must be linearly related i.e there exist real numbers $a_j$ so that
\begin{equation*}
l_N^0 = \sum_{j=1}^{n-k}{a_jl_j^0}.
\end{equation*}
\noindent But then, since $v_j$'s are linearly independent, the two equations together force the $a_j$'s to be one i.e
\begin{equation*}
l_N^0 = \sum_{j=1}^{n-k}{l_j^0} .
\end{equation*}
\noindent The lemma now follows.

\end{proof}

\noindent{\bf Example.}
Let $X = \mathbb{P}^2\#\overline{\mathbb{P}^2}$ and $Y=\mathbb{P}^2$. On $Y$ we take $\mathcal{L}$ to be the anti-canonical bundle and the corresponding $P\subset \mathbb{R}^2$ to be defined by $\{x+1 \geq 0, y+1\geq 0, 1-x-y \geq 0\}$. One can view $X$ as a projective bundle over $\mathbb{P}^1$ with a zero section $D_0$ and a section  $D_{\infty}$ at infinity. We take $A$ to be $2[D_{\infty}] - [D_0]$, with the polytope $P^A$ given by $\{x\geq 0, y \geq 0, 2 - x- y \geq 0, -1+ x + y \geq 0\}$. It follows from the Nakai criteria that $A$ is ample.  Then the polytope for $L_t$ is given by the inequalities $\{x+1\geq 0, y+1\geq 0, 1+2t - x- y \geq 0, 2-t + x+y \geq 0\}$. Computing the $\be_j^0$ for this example we see that  $\be_1^0 = 1, \be_2^0 = 1, \be_3^0 = 1, \be_4^0 = 2$. One can now easily verify Lemma \ref{lemma for beta} for this simple example.

\bigskip

Now let $\omega_t$ and $\omega_Y$ be the unique toric conical K\"ahler-Einstein metrics with Einstein constant $\al$ on $X$ and $Y$ in the class $c_1(L_t)$ and $c_1(\mathcal{L})$ respectively. In section 3, we worked with conical reference metrics coming from the symplectic potential. However, for dealing with convergence issues as the K\"ahler class degenerates, it is more convenient to work with smooth reference forms. So we pick a K\"ahler form $\tomega_Y \in c_1(\mathcal{L})$ and a K\"ahler form $\chi \in c_1(A)$. More explicitly, by taking the embedding of $Y$ into a big projective space via the sections coming from the lattice points of $P$,  we can set $\tomega_Y$ to be the pull-back of the Fubini-Study metric. One can make a similar choice for $\chi$.  We then set $\tomega_t = \pi^{*}\tomega_Y + t\chi$. Clearly, there exist locally bounded functions $\psi_t$ such that $\omega_t = \tomega_t + \sqrt{-1}\partial\bar\partial \psi_t$. We similarly have a potential $\psi_Y$ for $\omega_Y$.

\begin{lemma} \label{oscpsi}
There exists a uniform constant $C$ independent of $t$ such that
\begin{equation}\label{uniform oscillation bound}
\sup_X{\psi_t} - \inf_X{\psi_t}  \leq C.
\end{equation}
\end{lemma}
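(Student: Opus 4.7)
The plan is to reduce the bound to the a priori $C^0$ estimate of Proposition \ref{c^0 estimate} and then compare the smooth reference $\tilde\omega_t$ with the symplectic-potential-based conical reference $\hat\omega_t = \sqrt{-1}\partial\bar\partial \hat\varphi_t \in c_1(L_t)$ whose Legendre transform is $\hat u_t = \sum_j (\beta_j^t)^{-1} l_j^t \log l_j^t$. Writing the K\"ahler-Einstein metric as $\omega_t = \sqrt{-1}\partial\bar\partial \varphi_t$ on $(\mathbb{C}^*)^n$, the first step is to verify that Proposition \ref{c^0 estimate} yields a bound $|\varphi_t - \hat\varphi_t|_{L^\infty(\mathbb R^n)} \leq C_1$ uniform in $t$. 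For this I check that every constant entering the proposition can be chosen independently of $t \in [0,1]$: the cut-off $s_0 = \alpha/2$ works since $\alpha$ is fixed; the volumes $\mathrm{Vol}(P^t) = \mathrm{Vol}(L_t)^n/n!$ stay positive and bounded; the diameters of $P^t$ and the suprema $\sup_{j,P^t}|l_j^t|$ vary continuously; and the gradient constant $\Lambda$ from Lemma \ref{ggrad} stays bounded because each angle $\beta_j^t = \alpha\, l_j^t(P_C^t)$ is a continuous function of $t$ that remains strictly positive (the barycenter $P_C^t$ converges to $P_C$ in the interior of $P$, so $\beta_j^0 > 0$ for all $j$, even though $\beta_N^0$ may exceed $1$, which the formalism permits).

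The second step is to compare the two reference metrics. Let $\hat\psi_t$ be the normalized bounded function on $X$ with $\hat\omega_t = \tilde\omega_t + \sqrt{-1}\partial\bar\partial \hat\psi_t$. On $(\mathbb{C}^*)^n$, $\psi_t - \hat\psi_t$ and $\varphi_t - \hat\varphi_t$ differ by a constant, so by Step 1, $\text{osc}_X \psi_t \leq 2C_1 + \text{osc}_X \hat\psi_t$; it suffices to bound $\text{osc}_X \hat\psi_t$ uniformly in $t$. This reduces to a comparison of symplectic potentials on $\overline{P^t}$: if $\tilde u_t = \sum_j l_j^t \log l_j^t + \tilde f_t$ is the Guillemin potential of $\tilde\omega_t$, then
\[
\hat u_t - \tilde u_t = \sum_j\!\left(\tfrac{1}{\beta_j^t}-1\right)\! l_j^t \log l_j^t \; - \; \tilde f_t,
\]
and each $l_j^t \log l_j^t$ is bounded on the bounded polytope, while the coefficients $(\beta_j^t)^{-1}-1$ stay uniformly bounded. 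The term $\tilde f_t$ is controlled by using the explicit decomposition $\tilde\varphi_t = \pi^*\tilde\varphi_Y + t\varphi_A$ of the smooth reference potential on $(\mathbb{C}^*)^n$. Legendre duality then converts a uniform $L^\infty$ bound on $\hat u_t - \tilde u_t$ over $\overline{P^t}$ into the desired uniform oscillation bound for $\hat\psi_t$ on $X$, since two toric K\"ahler potentials in the same cohomology class whose Legendre transforms are uniformly close differ by a uniformly bounded function on $\mathbb R^n$.

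The main obstacle is controlling $\tilde f_t$ uniformly as $t \to 0$. At $t=0$, the face $\{l_N^t = 0\}$ collapses to the $k$-dimensional stratum $\{l_1^0 = \cdots = l_{n-k}^0 = 0\}$, so the Guillemin decomposition of $\tilde u_t$ becomes formally singular and a bare polytope-based estimate breaks down; the remedy is to work with the explicit form $\tilde\varphi_t = \pi^*\tilde\varphi_Y + t\varphi_A$ on the complex side, where the degeneration is harmless because $\pi^*\tilde\varphi_Y$ is a well-defined bounded-from-above plurisubharmonic function on $X$ with analytic singularities only along the exceptional divisor, and the identity of Lemma \ref{lemma for beta} ensures that these singularities balance against the $\log l_j^t$ factors in $\hat u_t$ when one passes between the complex and symplectic sides. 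Once this uniform comparison is in hand, combining Steps 1 and 2 gives $\text{osc}_X \psi_t \leq 2C_1 + 2C_2$, proving \eqref{uniform oscillation bound}.
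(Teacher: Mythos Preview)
Your approach differs substantially from the paper's. The paper does not compare symplectic potentials at all; instead it extracts from the proof of Proposition~\ref{c^0 estimate} only the intermediate estimate $w_t \geq \zeta|\rho| - C$ (uniform in $t$ since the polytopes $P^t$ vary continuously), which yields exponential decay $\det(\nabla^2\varphi_t) \leq Ce^{-\zeta|\rho|}$. Combining this with elementary two-sided exponential bounds on $\det(\nabla^2\phi)$ for the Fubini--Study potential $\phi$ of $\chi$, the paper shows that $\omega_t^n/\chi^n \in L^{1+\epsilon}(X,\chi^n)$ uniformly. The oscillation bound then follows from the Ko{\l}odziej--EGZ--Zhang $L^\infty$ estimate for the degenerate equation $(\tilde\omega_t + \sqrt{-1}\partial\bar\partial\psi_t)^n = (\omega_t^n/\chi^n)\chi^n$; this pluripotential machinery absorbs the degeneration of $\tilde\omega_t$ automatically once the right-hand side is controlled in $L^p$.

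Your Step~1 is correct, and the strategy of reducing to a uniform bound on $\mathrm{osc}_X\hat\psi_t$ is a legitimate alternative route. But the ``remedy'' you give for the main obstacle is not a proof. The potential $\pi^*\tilde\varphi_Y$ is smooth on $X$ (it is the \emph{metric} $\pi^*\tilde\omega_Y$ that degenerates along the exceptional divisor), so the talk of ``analytic singularities'' balancing via Lemma~\ref{lemma for beta} is off target; that lemma concerns the relation among the limiting angles $\beta_j^0$ and is used in the paper only for the convergence of Hermitian metrics, not here. What actually closes your gap is simpler than what you sketch: since $\tilde\varphi_Y$ and $\varphi_A$ are explicit log-sum-of-exponentials, one has $h_P(\rho) \leq \tilde\varphi_Y(\rho) \leq h_P(\rho) + C_Y$ and likewise for $\varphi_A$, so $|\tilde\varphi_t - h_{P^t}| \leq C$ uniformly (using $h_{P^t} = h_P + t\,h_{P^A}$). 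Legendre-dualizing against the indicator of $\overline{P^t}$ gives $-C \leq \tilde u_t \leq 0$ on $\overline{P^t}$ uniformly, and since $\hat u_t$ is uniformly bounded on $\overline{P^t}$ by the elementary argument you already gave, the Legendre identity finishes Step~2. So your route can be completed---and more elementarily than the paper's---but as written the decisive step is a gesture rather than an argument, and the appeal to Lemma~\ref{lemma for beta} is misplaced.
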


\begin{proof}
We fix a volume form on $X$ say $\Omega = \chi^n$. Recall from section 3, that $\omega_t$ is given locally on $(\mathbb{C}^*)^n$ by $\omega_t = \ddbar\varphi_t$, where $\varphi_t$ is a function only of $\rho \in \mathbb{R}^n$ and satisfies the real Monge-Ampere
\begin{equation*}
\det(\nabla^2\varphi_t) = e^{-\al(\varphi_t - P_C^t\cdot \rho)} = e^{-w_t}
\end{equation*}
\noindent The volume for $\omega_t^n$ is given by
\begin{equation*}
\omega_t^n = (\det{\nabla^2\varphi_t}) d\rho_1\wedge \dotsc \wedge \rho_n \wedge d\theta_1\dotsc\wedge d\theta_n
\end{equation*}
All the estimates in the proof of Proposition \ref{c^0 estimate} remain uniform under small perturbations of the polytope. In particular, the estimate (\ref{w estimate}) holds with constants $\zeta$ and $C$ independent of $t$. That is
\begin{equation*}
\det(\nabla^2\varphi_t) < Ce^{-\zeta |\rho|}.
 \end{equation*}
Similarly on $(\mathbb{C}^*)^n$, $\chi = \ddbar\phi$. Since $\chi$ is the pull back of the Fubini-Study metric, one can take
\begin{equation*}
\phi = \log{(\sum_{\nu \in P^A \cap \mathbb{Z}^n}{e^{\nu\cdot \rho}})} .
\end{equation*}
By an elementary calculation,  there exist constant $B_1, B_2, B_3, B_4$ depending only on $P^A$ such that
$$  B_3 e^{-B_1 |\rho|} < \det(\nabla^2 \phi) < B_4  e^{ - B_2 |\rho|}    .$$
Now, consider the trivial identity
\begin{equation}
(\tomega_t + \ddbar\psi_t)^n = \omega_t^n = \frac{\omega_t^n}{\Omega}\Omega .
\label{this lemma}
\end{equation}
\noindent We claim that $\omega_t^n/\Omega$ is in $L^{1+\epsilon}(X,\Omega)$ for some $\epsilon>0$. This is because
\begin{align*}
\int_{X}{\Bigg(\frac{\omega_t^n}{\Omega}\Bigg)^{1+\epsilon}\Omega} &= \int_{(S^1)^n}\int_{\mathbb{R}^n}{\Bigg(\frac{\det(\nabla^2\varphi_t)}{\det(\nabla^2\phi)}\Bigg)^{1+\epsilon} \det(\nabla^2\phi)\, d\rho d\theta}\\
&\leq C\int_{\mathbb{R}^n}{\Bigg(\frac{e^{-\zeta|\rho|}}{e^{-B_1|\rho|}}\Bigg)^{\epsilon}e^{-B_2|\rho|}\, d\rho}\\
&\leq C\int_{\mathbb{R}^n}{e^{ -(B_2 +  \epsilon(B_1-\zeta)) |\rho|}\, d\rho}\\
&\leq C
\end{align*}
\noindent if $\epsilon$ is small enough. Then in lieu of (\ref{this lemma}), since we have a uniform $L^{1+\epsilon}(X,\Omega)$ bound on $\omega_t^n/\Omega$, applying \cite{Ko, EGZ, ZhZ} we directly obtain a uniform bound on the oscillation of $\psi_t$.

\end{proof}

We now spend some time in deriving a complex Monge-Ampere equation satisfied by $\psi_t$. Let  $\Omega$ and $\Omega_Y$ be two fixed volume forms on $X$ and $Y$ respectively and let $\xi_Y$, $\xi_A$ be metrics on $\mathcal{L}$ and $A$ such that $\omega_Y = -\ddbar \log{\xi_Y}$ and $\chi = -\ddbar\log{\xi_A}$. One can also view  $\Omega$ and $\Omega_Y$ as metrics on $-K_X$ and $-K_Y$. We recall the adjunction formula
\begin{equation*}
K_X = \pi^*K_Y + (n-k-1)[D_N].
\label{adjunction formula}
\end{equation*}
\noindent By the $\partial\bar\partial$-lemma there exists a metric $h_N$ on $[D_N]$ such that
\begin{equation}
\Omega = \frac{\pi^*\Omega_Y}{|s_N|_{h_N}^{2(n-k-1)}}.
\label{adjunction for volume forms}
\end{equation}
\noindent Next, since $-K_Y = \al \mathcal{L} + \tilde D$, one can choose smooth hermitian metrics $\tilde h_1,\dotsc,\tilde h_{N-1}$ on $\tilde D_1,\dotsc,\tilde D_{N-1}$ such that
\begin{equation}
 \prod_{j=1}^{N-1}{\tilde h_j ^{(1-\be_j^Y)}} =\pi^*\left(  \frac{\Omega_Y}{(\xi_Y)^{\al}} \right).
 \label{OmegaY and h}
\end{equation}
\noindent Using \ref{total transform}, we then define smooth metrics on $D_j$ for $j < N$ by setting
\begin{equation}
\begin{cases}
 h_j = \pi^*\tilde h_j/  h_N & j=1,\dotsc ,n-k \\
 h_j =  \pi^*\tilde h_j &  j=n-k+1,\dotsc,N-1 .
 \label{eq for h}
\end{cases}
\end{equation}
\noindent Finally we define a family of metrics on $[D_N]$ by
\begin{equation}
h_N(t) = \Bigg(\frac{\Omega\xi_A^{-t\al}\pi^*(\xi_Y^{-\al})}{\prod_{j=1}^{N-1}{h_j^{(1-\be_j^t)}}}\Bigg)^{\frac{1}{1-\be_N^t}}.
\end{equation}
\noindent We claim

\begin{lemma} At all points of $X$,
\begin{equation}\label{conv of h}
\lim_{t\rightarrow 0}{\frac{h_N(t)}{h_N}} = 1.
\end{equation}
\end{lemma}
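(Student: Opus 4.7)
The plan is to take the defining equation for $h_N(t)$ and pass directly to the pointwise limit as $t\to 0$, then substitute the given relations for the $h_j$ and use Lemma \ref{lemma for beta} to make everything collapse to $h_N$. First I would verify continuity of all the $t$-dependent data: the polytope $P^t$ varies linearly in $t$ and degenerates to $P$ at $t=0$, so the barycenter $P_C^t$ and hence the angles $\beta_j^t = \al l_j^t(P_C^t)$ depend continuously on $t$. In particular $\beta_j^0 = \beta_j^Y$ for $j=1,\dots,N-1$, and $\beta_N^0 = \sum_{j=1}^{n-k}\beta_j^0$ as in the proof of Lemma \ref{lemma for beta}. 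Since the right-hand side of the defining formula depends continuously on $t$ and $1-\beta_N^t\to 1-\beta_N^0\ne 0$, the pointwise limit $h_N(0)$ exists and satisfies
$$h_N(0)^{1-\beta_N^0} \;=\; \frac{\Omega\,\pi^*(\xi_Y^{-\al})}{\prod_{j=1}^{N-1} h_j^{1-\beta_j^0}}.$$

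Next I would substitute the formulas \eqref{eq for h}: using $h_j = \pi^*\tilde h_j/h_N$ for $j\leq n-k$ and $h_j = \pi^*\tilde h_j$ for $n-k+1\le j\leq N-1$, the denominator becomes
$$\prod_{j=1}^{N-1} h_j^{1-\beta_j^0} \;=\; h_N^{-\sum_{j=1}^{n-k}(1-\beta_j^0)}\cdot \pi^*\!\Bigl(\textstyle\prod_{j=1}^{N-1}\tilde h_j^{1-\beta_j^Y}\Bigr) \;=\; h_N^{-\sum_{j=1}^{n-k}(1-\beta_j^0)}\cdot \pi^*(\Omega_Y/\xi_Y^\al),$$
where the second step uses \eqref{OmegaY and h}. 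Plugging back and cancelling the $\xi_Y$ factors yields $h_N(0)^{1-\beta_N^0} = \Omega\cdot h_N^{\sum_{j=1}^{n-k}(1-\beta_j^0)}/\pi^*\Omega_Y$.

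Finally, I would invoke the adjunction identity \eqref{adjunction for volume forms} to substitute $\Omega/\pi^*\Omega_Y = h_N^{-(n-k-1)}$ (interpreted consistently in the metric sense, so that the Jacobian factor $|s_N|^{2(n-k-1)}$ intrinsic to the pullback of volume forms is absorbed into the identification of line bundles $\pi^*(-K_Y) \cong -K_X+(n-k-1)[D_N]$). This produces
$$h_N(0)^{1-\beta_N^0} \;=\; h_N^{\sum_{j=1}^{n-k}(1-\beta_j^0) - (n-k-1)},$$
and applying Lemma \ref{lemma for beta} the exponent on the right is exactly $1-\beta_N^0$. Extracting $(1-\beta_N^0)$-th roots gives $h_N(0)=h_N$, which proves the claim.

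The only delicate point is ensuring that all factors in the defining equation for $h_N(t)$ are interpreted as Hermitian metrics on the appropriate line bundles (rather than as $(n,n)$-form densities, which would introduce a spurious $|s_N|^{2(n-k-1)}$ factor through the degenerate Jacobian of $\pi$ along $D_N$). With that convention fixed, the adjunction formula reads $\Omega\cdot h_N^{n-k-1}=\pi^*\Omega_Y$ as metrics, and this is precisely the form that lets Lemma \ref{lemma for beta} produce the cancellation. Everything else is routine algebra together with the continuous dependence of $\beta_j^t$ on $t$.
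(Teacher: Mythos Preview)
Your proof is correct and follows essentially the same route as the paper: take the limit in the defining formula for $h_N(t)$, substitute \eqref{eq for h} and \eqref{OmegaY and h}, use the adjunction relation $\Omega = \pi^*\Omega_Y \cdot h_N^{-(n-k-1)}$ as metrics, and invoke Lemma \ref{lemma for beta} to match exponents. Your added remarks on continuity of $\beta_j^t$ and on the metric-versus-volume-form interpretation of the adjunction identity are welcome clarifications not spelled out in the paper, but the algebraic skeleton is identical.
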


\begin{proof}
If we consider $\Omega$ as a metric on $-K_X$ and $\pi^*\Omega_Y$  as a pull back metric on $-\pi^*K_Y$, then by equation \eqref{adjunction for volume forms},  $\Omega = \pi^*\Omega_Y h_N^{-(n-k-1)}$.
\begin{align*}
\lim_{t\rightarrow 0}{\frac{h_N(t)}{h_N}} &= \Bigg(\frac{\Omega\pi^*(\xi_Y)^{-\al}}{h_N^{(1-\be_N^0)}\prod_{j=1}^{N-1}{h_j^{(1-\be_j^0)}}}\Bigg)^{\frac{1}{1-\be_N^0}}\\
&=\Bigg(\frac{\pi^*(\Omega_Y\xi_Y^{-\al})h_N^{-(n-k-1)}}{h_N^{(1-\be_N^0)}\prod_{j=1}^{N-1}{h_j^{(1-\be_j^0)}}}\Bigg)^{\frac{1}{1-\be_N^0}}\\
&= \Bigg(\frac{\pi^*(\Omega_Y\xi_Y^{-\al})}{\prod_{j=1}^{N-1}{\pi^*\tilde h_j^{(1-\be_j^Y)}}}\Bigg)^{\frac{1}{1-\be_N^0}}\\
&=1,
\end{align*}
\noindent where we used lemma \ref{eq for beta}, equation \eqref{eq for h} in line three and equation \eqref{OmegaY and h} in line four.

\end{proof}

\noindent By applying logarithm and taking $\partial\bar\partial$ we see that $h_1,\dotsc,h_{N-1}$ and $h_N(t)$ satisfy

\begin{equation*}
-\partial\bar\partial\log{\Omega} = \al\tomega_t - \sum_{j=1}^{N-1}{(1-\be_j^t)\partial\bar\partial\log{h_j}} - (1-\be_N^t)\partial\bar\partial\log{h_N(t)}.
\end{equation*}

\noindent The purpose of the above constructions is that $\psi_t$ and $\psi_Y$ now satisfy, possibly after modification by some constant, the following Monge-Ampere equations

\begin{align}
(\tomega_t + \ddbar\psi_t)^n &= \frac{e^{-\al\psi_t}\Omega}{|s_N|^{2(1-\be_N^t)}_{h_N(t)}\prod_{j=1}^{N-1}{|s_j|^{2(1-\be^t_j)}_{h_j}}} \tag*{$(*)_t$} \label{ma t},  \\
(\tomega_Y + \ddbar\psi_Y)^n &= \frac{e^{-\al\psi_Y}\Omega_Y}{ \prod_{j=1}^{N-1}{|\tilde s_j|_{\tilde h_j} ^{2(1-\be_j^Y)}}} \tag*{$(*)_Y$} \label{ma Y}.
\end{align}

\noindent We remark that since modification by a constant doesn't change the oscillation, the estimate of lemma \ref{uniform oscillation bound} holds for this modified $\psi_t$. Immediately, we have the following corollary from Lemma \ref{oscpsi} because the total volume of $(X , \omega_t)$ is $[L_t]^n$ and is uniformly bounded .

\begin{corollary} There exists a unique constant $C>0$ such that for all $t\in (0,1]$,
\begin{equation}
||\psi_t||_{L^\infty(X)} \leq C.
\end{equation}

\end{corollary}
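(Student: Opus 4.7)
The plan is to upgrade the oscillation bound from Lemma \ref{oscpsi} to a full $L^\infty$ bound by using the Monge-Amp\`ere equation $(*)_t$ to pin down the sup (equivalently, the inf) of $\psi_t$. Concretely, after fixing any convenient normalization (say $\sup_X \psi_t$ or $\int_X \psi_t$ vs.\ the reference), the oscillation bound reduces the task to a uniform two-sided control on one value of $\psi_t$, and that value will be extracted by integrating the equation.

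First, I would integrate both sides of $(*)_t$ over $X$. The left-hand side equals the total volume $\int_X \omega_t^n = [L_t]^n$, which is a polynomial in the coefficients of the polytope $P^t$; since the polytopes $P^t$ stay in a fixed bounded region and (by construction) do not degenerate on $(0,1]$, this quantity is bounded above and below by positive constants. The right-hand side is
\[
\int_X \frac{e^{-\alpha \psi_t}\,\Omega}{|s_N|_{h_N(t)}^{2(1-\beta_N^t)}\,\prod_{j=1}^{N-1}|s_j|_{h_j}^{2(1-\beta_j^t)}}.
\]
I would pull out the factor $e^{-\alpha \sup_X \psi_t}$ (resp.\ $e^{-\alpha \inf_X \psi_t}$) and use Lemma \ref{oscpsi} to bound $e^{-\alpha(\psi_t - \sup_X\psi_t)}$ between $1$ and $e^{\alpha C}$, yielding
\[
e^{-\alpha \sup_X \psi_t}\,I_t \;\le\; [L_t]^n \;\le\; e^{\alpha C}\,e^{-\alpha \sup_X \psi_t}\,I_t,
\qquad\text{where}\qquad
I_t := \int_X \frac{\Omega}{|s_N|_{h_N(t)}^{2(1-\beta_N^t)}\,\prod_{j=1}^{N-1}|s_j|_{h_j}^{2(1-\beta_j^t)}}.
\]

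The remaining step is to show that $I_t$ is bounded uniformly above and below away from zero. The exponents $\beta_j^t = \alpha\,l_j^t(P_C^t)$ depend continuously on $t$ and extend continuously to $t=0$ by Lemma \ref{lemma for beta} (so in particular they stay in a compact subset of $(0,\infty)$ and away from $0$ on $[0,1]$). For $j<N$, $1-\beta_j^t$ stays strictly less than $1$, so the local singularity $|s_j|^{-2(1-\beta_j^t)}$ is integrable with a uniform bound; for $j=N$ the identity $\beta_N^0 = \sum_{i=1}^{n-k}\beta_i^0$ allows $1-\beta_N^t$ to become non-positive, in which case $|s_N|^{-2(1-\beta_N^t)}$ is actually bounded. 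The convergence $h_N(t)/h_N\to 1$ of \eqref{conv of h}, combined with uniform continuity on $[0,1]$, upgrades this to a uniform two-sided bound on $I_t$.

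Combining the two bounds gives uniform upper and lower bounds on $\sup_X \psi_t$; applying Lemma \ref{oscpsi} once more converts this into the desired uniform $L^\infty$ bound. The only real subtlety is verifying uniform integrability of $I_t$ across the blow-down at $t=0$, which is handled by the continuous dependence of $\beta_j^t$ and the convergence of $h_N(t)$ just cited — the algebraic identity of Lemma \ref{lemma for beta} is precisely what prevents the exponents from colliding with the critical value $1$ in a way that would make the reference integral blow up.
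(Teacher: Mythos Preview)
Your approach is correct and is precisely what the paper's one-line justification (``immediately \ldots\ from Lemma~\ref{oscpsi} because the total volume $[L_t]^n$ is uniformly bounded'') is pointing at: integrate $(*)_t$, use the oscillation bound to peel off $e^{-\alpha\sup\psi_t}$, and control the remaining reference integral $I_t$ together with $[L_t]^n$. One minor correction: for $t>0$ the hypothesis $\alpha<\mathcal{R}(X,L_t)$ forces $\beta_N^t<1$, so $1-\beta_N^t$ stays strictly positive on $(0,1]$ (it can only tend to $0$, not become non-positive, as $t\to 0$); either way the exponent $2(1-\beta_N^t)$ is bounded away from $2$ and your uniform integrability argument for $I_t$ goes through unchanged.
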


We next prove uniform estimates away from $D$ on all derivatives of $\psi_t$

\begin{proposition}
For all $l\geq 0$ and $K \subset\subset X\backslash D$, there exist constants $C_{K,l}$ independent of $t$ such that
\begin{equation}\label{uniform local estimates}
||\psi_t||_{C^l(K)}< C_{K,l} .
\end{equation}
Here the norm is with respect to some fixed reference metric.
\end{proposition}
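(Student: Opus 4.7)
The strategy is a standard local second-order estimate, followed by Evans-Krylov and bootstrapping, all made uniform in $t$. Since we already have a uniform $L^\infty$ bound on $\psi_t$, and on any compact $K' \subset\subset X\setminus D$ the defining sections $s_j$ are uniformly bounded away from zero in modulus, the right-hand side $F_t$ of the complex Monge-Amp\`ere equation $(*)_t$ is uniformly bounded in $C^l(K')$ for every $l$, with bounds independent of $t$. Moreover, since $K' \cap D_N = \emptyset$, the reference form $\tilde\omega_t = \pi^*\tilde\omega_Y + t\chi$ is uniformly equivalent to the fixed smooth K\"ahler metric $\pi^*\tilde\omega_Y|_{K'}$ for $t \in (0,1]$, and its bisectional curvature is uniformly bounded on $K'$ in $t$.

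First I would establish a local Laplacian estimate. Fix compact sets $K \subset\subset K' \subset\subset X\setminus D$ and choose a cutoff $\eta \in C^\infty_c(K')$ with $\eta \equiv 1$ on $K$. Consider the quantity $H_t = \eta^2 \log \operatorname{tr}_{\tilde\omega_t} \omega_t - A\psi_t$ for a large constant $A$, and apply the maximum principle on $K'$ using the standard Aubin-Yau inequality
\[
\Delta_{\omega_t}\log \operatorname{tr}_{\tilde\omega_t} \omega_t \geq -C_1 \operatorname{tr}_{\omega_t}\tilde\omega_t + \frac{\Delta_{\tilde\omega_t}\log F_t}{\operatorname{tr}_{\tilde\omega_t} \omega_t},
\]
combined with $\Delta_{\omega_t}\psi_t = n - \operatorname{tr}_{\omega_t}\tilde\omega_t$. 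Choosing $A$ larger than the uniform bound on the bisectional curvature of $\tilde\omega_t$ on $K'$ and absorbing the cutoff gradient terms in the standard way, the maximum principle forces $\operatorname{tr}_{\tilde\omega_t}\omega_t \leq C$ on $K$ uniformly in $t$.

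Combined with the MA equation $(*)_t$ this gives uniform ellipticity $C^{-1}\tilde\omega_t \leq \omega_t \leq C\tilde\omega_t$ on $K$. Then $(*)_t$ is a uniformly elliptic, concave fully nonlinear second-order equation on $K$ with uniformly smooth coefficients and smooth right-hand side, so Evans-Krylov yields a uniform $C^{2,\gamma}(K)$ bound on $\psi_t$. Differentiating $(*)_t$ and applying Schauder estimates iteratively then produces uniform $C^l(K)$ bounds for every $l \geq 0$, establishing \eqref{uniform local estimates}.

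The main obstacle is to ensure $t$-uniformity in the Laplacian step: one must know that the curvature of $\tilde\omega_t$ and the constant $A$, as well as the infimum of $\operatorname{tr}_{\tilde\omega_t}\omega_t$ over $K'$, do not degenerate as $t \to 0$. All of these follow from the uniform equivalence of $\tilde\omega_t$ with $\pi^*\tilde\omega_Y$ on $K'$, which in turn is available precisely because $K'$ avoids the exceptional divisor $D_N \subset D$; the collapsing that occurs along $D_N$ as $t \to 0$ is thus invisible on any compact subset of $X\setminus D$.
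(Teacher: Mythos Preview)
Your local cutoff argument is essentially correct, but the paper takes a different route. Instead of localizing with a cutoff and using the degenerating reference $\tilde\omega_t$, the paper works \emph{globally} via Tsuji's trick: by Kodaira's lemma one fixes a genuine K\"ahler metric $\eta = \pi^*\tilde\omega_Y + \epsilon\,\ddbar\log\xi_N$ on all of $X$, and then applies the maximum principle to
\[
Q_t \;=\; \log\Big(|s_N|^{2(1+A\epsilon)}_{\xi_N}\textstyle\prod_{j=1}^{N-1}|s_j|_{h_j}^{2}\,\tr_{\eta}\omega_t\Big) - A\psi_t.
\]
Because the pole order of $\omega_t$ along each $D_j$ is $2(1-\beta_j^t)<2$, the section weight forces $Q_t\to -\infty$ along $D$, so the maximum lies in $X\setminus D$ and no cutoff is needed. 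The Einstein condition $\ric(\omega_t)=\alpha\omega_t$ on $X\setminus D$ then makes the Ricci term in the Aubin--Yau computation automatically favorable. This yields a \emph{global} bound $\tr_{\eta}\omega_t \le C\big(|s_N|^{2(1+A\epsilon)}\prod|s_j|^{2}\big)^{-1}$, giving the local estimate on any $K\subset\subset X\setminus D$ as a byproduct and with explicit blow-up control near $D$. Your approach is more elementary (it avoids Kodaira's lemma and works with $\tilde\omega_t$ directly, exploiting that the degeneration is invisible on $K'$), while the paper's yields a sharper global statement.

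One small gap in your write-up: the claim that $F_t$ is uniformly in $C^l(K')$ for all $l$ \emph{a priori} is circular, since $F_t$ contains the factor $e^{-\alpha\psi_t}$ and you do not yet have higher derivative bounds on $\psi_t$. In the Laplacian step this is harmless, because $\Delta_{\tilde\omega_t}\log F_t = -\alpha(\tr_{\tilde\omega_t}\omega_t - n) + (\text{uniformly bounded})$, so the corresponding term in your Aubin--Yau inequality is controlled once you note $\tr_{\tilde\omega_t}\omega_t \ge n\,F_t^{1/n}$ is bounded below on $K'$; alternatively one simply invokes $\ric(\omega_t)=\alpha\omega_t$ directly, as the paper does. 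For Evans--Krylov and the bootstrap the same remark applies: you feed in the regularity of $\psi_t$ obtained at each stage, not an a priori $C^l$ bound on $F_t$.
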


\begin{proof}  Since the usual arguments for $C^{2,\gamma}$ estimates, using the theory of Evans, Kryllov and Safanov, are local in nature they can be used in the present context. Hence, to prove the proposition, we only need uniform  $C^2$ estimates.

We follow the argument in \cite{ST0} using Tsuji's trick \cite{Ts}.  By Kodaira's lemma, $L_\epsilon = \pi^*\tomega_y - \epsilon [D_N] > 0$ for small $\epsilon >0$. So, we pick a new smooth hermitian metric $\xi_N$ on $[D_N]$ such that $\eta = \pi^*\tomega_Y + \epsilon\partial\bar\partial\log{\xi_N} > 0$ and consider
\begin{equation*}
Q_t = \log\Bigg({|s_N|^{2(1 + A\epsilon)}_{\xi_N}\prod_{j=1}^{N-1}{|s_j|_{h_j}^2} tr_{\eta}{\omega_t}}\Bigg) - A\psi_t
\end{equation*}
 for some big constant $A$ to be chosen later. We note that $Q$ goes to negative infinity near $D$. This is because the order of poles of $\omega_t$ near each $D_j$ is $2(1-\be_j^t)$ which is strictly less than two. So for each $t$, the maximum is attained in $X\backslash D$. Following Yau, we compute $\Delta_t Q_t$ where $\Delta_t$ is the Laplacian with respect to $\omega_t$. Since on $X\backslash D$, $Ric(\omega_t) = \al\omega_t$, standard calculations show that there exists a constant $C$ depending only on the dimension and curvature of $\eta$ such that
\begin{equation*}
\Delta_tQ_t \geq -Ctr_{\omega_t}{\eta} + (1+ A\epsilon)\Delta_t\log{\xi_N} + \sum_{j=1}^{N-1}{\Delta_t\log{h_j}}  + Atr_{\omega_t}\tomega_t -C.
\end{equation*}
\noindent Also, there exists a constant $C'$ independent of $t$ such that
\begin{align*}
\Delta_t\log{\xi_N} &> -C'tr_{\omega_t}\eta, \\
\Delta_t\log{h_j} &> -C'tr_{\omega_t}\eta.
\end{align*}
\noindent Combining this with the above estimate
\begin{align*}
\Delta_tQ_t &\geq -Ctr_{\omega_t}{\eta} + Atr_{\omega_t}(\tomega_t + \epsilon \partial\bar\partial\log{\xi_N}) - C\\
&= -Ctr_{\omega_t}{\eta} + Atr_{\omega_t}(\eta + t\chi) -C\\
&> tr_{\omega_t}{\eta} - C,
\end{align*}
\noindent where we choose $A = C+1$. So, at the maximum point of $Q_t$, $tr_{\omega_t}\eta (p_t) < C$. Now, standard arguments show
\begin{equation*}
\Bigg({|s_N|^{2(1 + A\epsilon)}_{\xi_N}\prod_{j=1}^{N-1}{|s_j|_{h_j}^2}\Bigg) tr_{\eta}{\omega_t}}  < Ce^{\sup{\psi_t} - \inf{\psi_t}}\Bigg(|s_N|^{2(1 + A\epsilon)}_{\xi_N}\prod_{j=1}^{N-1}{|s_j|_{h_j}^2}\frac{\omega_t^n}{\eta^n}\Bigg)(p_t).
\end{equation*}
\noindent Using the equation and the oscillation bound on $\psi_t$ and the fact that $\be^t_j < 1$,
\begin{equation*}
tr_{\eta}{\omega_t} < \frac{C}{\Bigg(|s_N|^{2(1 + A\epsilon)}_{\xi_N}\prod_{j=1}^{N-1}{|s_j|_{h_j}^2}\Bigg)}.
\end{equation*}
\noindent Hence, we have uniform second order estimates away from $D$ and this completes the proof of the proposition.

\end{proof}

With the above uniform local estimates and the uniqueness of $\omega_Y$, we have the following local uniform convergence away from the divisors.

\begin{proposition} For any compact subset $K\subset\subset X\setminus D$, we have the following uniform convergence
\begin{equation*}
\omega_t \xrightarrow{C^{\infty}(K)} \omega_Y.
\end{equation*}

\end{proposition}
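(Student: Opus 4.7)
The plan is to use the uniform local estimates \eqref{uniform local estimates} to extract a smooth subsequential limit on $X\setminus D$, pass to the limit in the Monge--Amp\`ere equations $(*)_t$, and then invoke uniqueness of the conical K\"ahler--Einstein metric on $Y$ to pin down the limit independently of the subsequence.

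First, fix any sequence $t_k\to 0^+$. By the local uniform $C^l$ estimates combined with Arzel\`a--Ascoli and a standard diagonal extraction, after passing to a subsequence (still denoted $t_k$), $\psi_{t_k}\to \psi_\infty$ in $C^\infty_{\mathrm{loc}}(X\setminus D)$ for some $\psi_\infty\in C^\infty(X\setminus D)$. The uniform $L^\infty$ bound from Lemma \ref{oscpsi} ensures $\psi_\infty\in L^\infty(X\setminus D)$, and since $\tilde\omega_{t_k}=\pi^*\tilde\omega_Y+t_k\chi\to\pi^*\tilde\omega_Y$ smoothly, the limit $\omega_\infty := \pi^*\tilde\omega_Y+\sqrt{-1}\partial\bar\partial\psi_\infty$ is a smooth K\"ahler metric on $X\setminus D$.

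Next I pass to the limit in $(*)_{t_k}$. Since the barycenter $P_C^t$ depends continuously on $t$, the exponents satisfy $\beta_j^{t_k}\to\beta_j^0=\beta_j^Y$ for $j=1,\ldots,N-1$ and $\beta_N^{t_k}\to\beta_N^0$; combined with $h_N(t_k)\to h_N$ from \eqref{conv of h}, the limit equation on $X\setminus D$ reads
\begin{equation*}
\omega_\infty^n=\frac{e^{-\alpha\psi_\infty}\,\Omega}{|s_N|^{2(1-\beta_N^0)}_{h_N}\prod_{j=1}^{N-1}|s_j|^{2(1-\beta_j^0)}_{h_j}}.
\end{equation*}
Substituting $\Omega=\pi^*\Omega_Y/|s_N|^{2(n-k-1)}_{h_N}$ from \eqref{adjunction for volume forms}, applying Lemma \ref{lemma for beta} to rewrite the $|s_N|_{h_N}$ exponent as $2\sum_{j=1}^{n-k}(1-\beta_j^Y)$, and using the factorizations $|\pi^*\tilde s_j|^{2}_{\pi^*\tilde h_j}=|s_j|^{2}_{h_j}\,|s_N|^{2}_{h_N}$ for $j\leq n-k$ and $|\pi^*\tilde s_j|^{2}_{\pi^*\tilde h_j}=|s_j|^{2}_{h_j}$ for $n-k<j<N$ (which follow from \eqref{total transform} and \eqref{eq for h}), the right hand side rewrites as
\begin{equation*}
\pi^*\!\left(\frac{e^{-\alpha\psi_\infty}\,\Omega_Y}{\prod_{j=1}^{N-1}|\tilde s_j|^{2(1-\beta_j^Y)}_{\tilde h_j}}\right),
\end{equation*}
i.e.\ precisely the pullback of the right hand side of $(*)_Y$.

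Since $\pi\colon X\setminus D\to Y\setminus V$ is a biholomorphism and $\psi_\infty$ is bounded, $\psi_\infty$ descends to a bounded $\tilde\omega_Y$-plurisubharmonic function $\hat\psi$ on $Y\setminus V$, which extends across the analytic subvariety $V$ by boundedness to a bounded $\tilde\omega_Y$-psh function on $Y$ solving $(*)_Y$ in the pluripotential sense. By the uniqueness result of Berndtsson \cite{B} for conical K\"ahler--Einstein metrics with $\alpha<R(Y,\mathcal{L})$, we conclude $\hat\psi=\psi_Y$ (after fixing the normalization via the equation itself), hence $\psi_\infty=\pi^*\psi_Y$ on $X\setminus D$. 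The limit being independent of the subsequence, the entire family $\psi_t$ converges in $C^\infty_{\mathrm{loc}}(X\setminus D)$ to $\pi^*\psi_Y$, giving $\omega_t\to\omega_Y$ in $C^\infty(K)$ for every compact $K\subset\subset X\setminus D$. The main obstacle is the bookkeeping of exponents in the limit equation: it is precisely the adjunction-type relation in Lemma \ref{lemma for beta} together with \eqref{conv of h} that makes the limit equation descend cleanly to $(*)_Y$, after which the argument is standard.
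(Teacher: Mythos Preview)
Your proposal is correct and follows precisely the approach the paper indicates: the paper's own ``proof'' is the single sentence ``With the above uniform local estimates and the uniqueness of $\omega_Y$, we have the following local uniform convergence away from the divisors,'' and you have carefully unpacked exactly that---Arzel\`a--Ascoli on the $C^l$ bounds, passage to the limit in $(*)_t$, and Berndtsson's uniqueness to identify the limit. One small slip: $\pi$ does not map $X\setminus D$ onto $Y\setminus V$ but onto $Y\setminus\tilde D=(\mathbb{C}^*)^n$, so the extension of $\hat\psi$ must be taken across the full toric boundary $\tilde D$ rather than just $V$; this changes nothing in the argument since bounded $\tilde\omega_Y$-psh functions extend across analytic sets all the same.
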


Using Moser iteration, one can in fact show that $\psi_t$ converges to $\pi^*\psi_Y$ globally in $L^\infty$.
We now have to prove the global convergence, in Gromov-Hausdorf topology, of $(X, \omega_t)$ to $(Y, \omega_Y)$.

\bigskip

\noindent \textit{\textbf{Proof of Proposition \ref{one blow-up}.}} We let  $t\rightarrow 0$. Fix an $\epsilon > 0$.  Let $E$ be a tubular neighborhood of $D \subset Y $ such that $A = Y\backslash E$ is $\epsilon$-dense in  $Y$ with respect to the metric $g_Y$. Note, that since $X$ and $Y$ are bi-holomorphic away from $D$, $A$ can be identified as a subset of $X$.  We also pick $E$ close enough to $D$ so that $Vol_{g_Y}(E) < \epsilon^{4n}$ and we set $\tilde E = \pi^*(E)$. Finally, we denote the distances with respect to $g_t$ and $g_Y$, by $d_t$ and $d_Y$ respectively.\\

\noindent \textit{\textbf{Claim 1.}} For $t$ small enough, $A=Y\backslash E = X \backslash \tilde E$ is $\epsilon$ - dense in $(X,g_t)$.
 If not, then there exists a sequence $t_k \rightarrow 0$ and points $x_k \in \tilde E$ such that $B_{g_{t_k}}(x_k,\epsilon) \subset \tilde E$. Using volume comparison,  uniform diameter bounds and the fact that the volumes converge, for small $t_k$
\begin{equation*}
\kappa \epsilon^{2n} < Vol_{g_k}(B_{g_k}(x_k,\epsilon)) < Vol_{g_k}(\tilde E) < 2Vol_{g_Y}(E) < 2\epsilon^{4n}
\end{equation*}
\noindent for some constant $\kappa$ if $k$ is sufficiently large. But if $\epsilon$ is small enough, this is  a contradiction. \\
\noindent \textbf{\textit{Claim 2.}} There exists a $t(\epsilon)$ such that for all $0<t<t(\epsilon)$ and for all $p,q \in A$, $$d_{t}(p,q) < d_Y(p,q) + \epsilon . $$
\noindent \textit{Proof}. By the geodesic convexity of $Y\backslash D$, one can choose a small tubular neighborhood, $T \subset E$ of $D$ in $Y$ such that any two points in $A$ can be connected by a $g_Y$ - minimal geodesic in $Y \backslash T$. Set $\tilde T  = \pi^{-1}(T)$. Let $\gamma$ be a $g_Y$ - minimal geodesic connecting $p$ and $q$ lying in $Y\backslash T$. Since the metrics converge uniformly on compact sets of $X\setminus \tilde E$, for $t$ sufficiently small,
\begin{equation*}
d_t(p,q) < \mathcal{L}_t(\gamma) < \mathcal{L}_Y(\gamma) + \epsilon = d_Y(p,q) + \epsilon,
 \end{equation*}
\noindent where $\mathcal{L}$ denotes the length functional. \\

\noindent \textit{\textbf{Claim 3.}} There exists a $t(\epsilon)$ such that for all $0<t<t(\epsilon)$ and for all $p,q \in A$, $$d_{t}(p,q) > d_Y(p,q) - \epsilon .$$

\noindent \textit{Proof}.  The proof of this claim relies on the generalization of Gromov's lemma to the conical setting (Lemma \ref{grom}). We once again choose a  tubular neighborhood $T$ of $D$ contained in $E$ with smooth boundary such that
for all $q\in A$,
\begin{align*}
B_{g_Y}(q,\epsilon/2) &\subset Y\backslash T\\
Vol_{g_Y}(\partial T) &< \delta/2
\end{align*}
\noindent and set $\tilde T  = \pi^*(T)$.  $D$ being of real co-dimension two, one can choose $\delta$ to be as small as needed. Since, away from $D$, the metric converges uniformly, we can assume that $Vol_{g_t}(\partial T) < \delta$ by choosing $t$ sufficiently small. Furthermore, since $d_{g_Y}(q,\partial T) > \epsilon/2$, once again by the uniform convergence of the metric on $X\backslash \tilde T$, for small $t$, $d_{g_t}(q,\partial \tilde T) > \epsilon/4$, i.e.,  $B_{g_t}(q,\epsilon/4) \subset X\backslash \tilde T$.

We claim that there exists at least one minimal geodesic from $p$ to a point in $B_{g_t}(q,\epsilon/4)$ that lies entirely in $X\backslash \tilde T$. If not, then by Gromov's lemma there exists a constant $c$ uniform in $t$ (but  depending on $\epsilon$) such that
\begin{equation*}
\kappa \epsilon^{2n} < Vol_{g_t}(B_{g_t}(q,\epsilon/4)) < c Vol_{g_t}(\partial \tilde T) < c\delta.
\end{equation*}
\noindent Letting $\delta$ go to zero, we get a contradiction.

So there exists at least one $g_t$ - minimal geodesic $\gamma_t$ connecting $p$ to a point $\tilde q_t \in B_{g_t}(q,\epsilon/4)$. By compactness, there exists a $\tilde q \in B_{g_Y}(q,\epsilon/2)$ such that $\tilde q_t \rightarrow \tilde q$ and moreover the geodesics $\gamma_t$ converge to a curve, denoted by $\gamma$, joining $p$ to $\tilde q$.

\begin{align*}
d_{g_t}(p,q) &> \mathcal{L}_{g_t}(\gamma) - \epsilon/4\\
&>\mathcal{L}_{g_Y} (\gamma) -\epsilon/2\\
&> d_{g_Y}(p,\tilde q) - \epsilon/2\\
&> d_{g_Y}(p,q) - \epsilon
\end{align*}
\noindent and this proves Claim 3.

\medskip
\noindent Now we complete the proof of the proposition.  For sufficiently small $t$,
\begin{eqnarray*}
&&d_{GH}((X,d_t),(Y,d_Y)) \\
&\leq& d_{GH}((X,d_t), (A,d_t)) + d_{GH}((A,d_t), (A,d_Y)) + d_{GH}((A,d_Y), (Y,d_Y)) \\
&<& 3\epsilon,
\end{eqnarray*}
\noindent where we use Claim 1 to bound the first term, Claim 2 and Claim 3 to bound the second term, while the last term is bound by $\epsilon$ from the choice of $A$.  Now, letting $\epsilon$ go to zero, we see that $(X,g_t)$ converges in Gromov-Hausdorff distance to $(Y,g_Y)$.    \qed

\bigskip

\section{Discussions}

In this section, we propose some questions in relation to our main results. In Theorem \ref{theorem-1}, given the toric pair $(X, L)$, the smooth toric conical K\"ahler-Ricci soliton equation can in general be solved for $\alpha \in (0, \mathcal{S}(X,L) )$, while the smooth toric conical K\"ahler-Einstein equation can be solved for $\alpha\in (0, \mathcal{R}(X, L) ]$. The following example illustrates when the soliton equation can be solved at $\mathcal{S}(X, L)$.

\begin{example}\label{ex5}Let $X=\mathbb P^2\# \overline{\mathbb P^2}$, i.e., $\mathbb P^2$ blow-up at one point, given by a polytope $P$ defined by $l_0 = x+y+\varepsilon>0$, $l_1 = y+1>0$, $l_2 = x+1>0$, and $l_\infty = -x - y +\varepsilon>0$, where $\varepsilon>0$ is a small constant. Let $L$ be the K\"ahler class induced by $P$. The corresponding ample line bundle $L$ over $X$ is determined by the divisor $D_1 + D_2 + \varepsilon D_\infty + \varepsilon D_0$ (noting that the bundle being ample is equivalent to $\varepsilon\in (0,2)$). By Theorem 1.1, $\frac{1}{\mathcal S(X,L)}$ can be characterized as
\begin{equation}\label{eqn:aimed1}
\inf_{(x,y)\in P} \max\{x+1, y+1, x+y+\varepsilon, -x - y + \varepsilon\}.
\end{equation}

\noindent By the symmetry of $x, y\in P$, the extremal point must be at the line $y-x = 0$, hence the aimed function is reduced to
\begin{equation}\label{eqn:aimed}
\inf_{x\in (-\varepsilon/2, \varepsilon/2)} \max\{x+1, 2x+\varepsilon, -2x + \varepsilon\}.
\end{equation}
We have three cases: $\varepsilon\in (0, \frac{2}5], (\frac{2}{5}, 1), [1,2)$.\begin{enumerate}
\item When $\varepsilon\in (0,\frac{2}{5}]$, the unique extremal point of \eqref{eqn:aimed} is at $x=-\frac{\varepsilon}{2}$, or, that of \eqref{eqn:aimed1} is at $(-\frac{\varepsilon}{2}, \frac{\varepsilon}{2})$, which is at the boundary of $P$, hence the conical K\"ahler-Ricci soliton equation cannot be solved at $\mathcal S(X,L)$ for this case.
\item When $\varepsilon\in (\frac{2}{5},1)$, the unique extremal point of \eqref{eqn:aimed} is at $x=-\frac{1-\varepsilon}{3}$, and the point $(-\frac{1-\varepsilon}{3},-\frac{1-\varepsilon}{3})$ is in the interior of $P$, hence by our proof above, the conical K\"ahler-Ricci soliton equation can be solved at $\mathcal S(X,L)$ in this case.
\item When $\varepsilon\in [1,2)$, the unique extremal point of \eqref{eqn:aimed} is at $x=0$, and the origin $(0,0)$ is always in $P$, so in this case, the conical K\"ahler-Ricci soliton equation can also be solved up to $\mathcal S(X,L)$.
\end{enumerate}
\label{closedness at S}
\end{example}

It is a natural question to ask in the above example, when $\epsilon \in (0, 2/5]$, if there exists a limiting space for the K\"ahler-Ricci solitons as $\alpha$ tends to $\mathcal{S}(X, L)$. Since the Futaki invariant blows up as $\alpha $ tends to $\mathcal{S}(X,L)$, most likely the limiting space will be a complete shrinking or steady soliton with an complete end at the exceptional divisor after some appropriate scaling.

We would also like to mention that since for $\alpha \in (0, \mathcal{R}_{BE})$, the toric conical K\"ahler-Ricci soliton metric in Theorem \ref{theorem-1} is not unique and in fact, there are infinitely many of them. Let $\mathcal{KR}(X, L, \alpha)$ be the space of K\"ahler-Ricci soliton metrics $\omega\in L \cap \mathcal{K}_c(X)$ with $Ric(\omega)=\alpha \omega + \mathcal{L}_\xi \omega + [D]$, if $\alpha \in (0, \mathcal{R}_{BE}(X,L))$. Then we define
$$F_{min} (X, L, \alpha) = \inf \{Fut(X, \omega)~|~ \omega \in \mathcal{KR}(X,L, \alpha)\}, $$
where $Fut(X, \omega)$ is the Futaki invariant for the holomorphic vector field $\xi$ and $Fut(X, \omega) = \int_X |\xi|^2 \omega^n.$ After some calculations, one can show that $F_{min}(X, L, \alpha)$ is achieved for some smooth toric K\"ahler-Ricci soliton metric in $L\cap \mathcal{K}_c(X)$. We call such a conical K\"ahler-Ricci slotion the minimal K\"ahler-Ricci soliton with respect to $(X, L, \alpha)$.

Finally we propose the following regression scheme to obtain models for any log Fano variety $(X, D)$ by maximizing the Ricci curvature and Bakry-Emery-Ricci curvature.

\medskip

We first consider the Fano case.
\begin{enumerate}

\item[{\bf (1)}] Let $X$ be a Fano manifold and $D$ be a smooth simple divisor in $|-mK_X|$ for some $m \in \mathbb{Z}^+$.

\medskip

\begin{enumerate}

\item[(1.1)] Let $T= \sup\{ t~|~Ric(\omega)= t \omega + (1-t)[D/m] \text{ for a conical K\"ahler metric } \omega\}$ and $\omega_t$ be the unique K\"ahler-Einstein metric solving $Ric(\omega_t) = t \omega_t + (1-t)[D/m]$ for  $t\in (0, T)$.

\medskip

\item[(1.2)] Then $(X, D/m, \omega_t)$ converges in Gromov-Hausdorff sense to a compact metric space $(X', D', \omega')$ homeomorphic to a Fano variety $X'$ with log terminal singularities.  $D'$ is an effective $\mathbb{R}$-divisor in $-K_{X'}$ and $\omega'$ is a K\"ahler-Einstein metric solving
$$Ric(\omega')=T \omega' + (1-T)[D'],$$
in particular, $\omega'$ is smooth outside $D'$ and the singularities of $X'$.

\medskip

\item[(1.3)] We replace $(X, D/m)$ by $(X', D')$ and go back to $(1)$ and continue the procedure.

\medskip

\item[(1.4)] The regression terminates in finite steps for finitely many data $\{ (X_i, D_i), T_i\}$,  $i=0, ..., I$,  with $(X_0, D_0)= (X, D/m)$ and $T_0=0$.

\medskip

\item[(1.5)] $  (X_I, D_I, \omega_I)$ is a  $\mathbb{Q}$-Fano conical K\"ahler-Einstein space with Ricci curvature equal to $T_I =R(X, D)\leq R(X)$. We cannot increase $T_I$ by repeating the above process.  $R(X,D)=R(X)$ for a generic smooth divisor $D$ and $I=1$. If $R(X)=1$, we stop with $D_I$ being $0$, otherwise we continue with the next step.

\end{enumerate}

\medskip

\item[{\bf (2)}] Let $(X_{KE}, D_{KE}) = (X_I, D_I)$.

\medskip

\begin{enumerate}

\item[(2.1)] Let $T = \sup\{ t~|~ Ric(\omega) = t\omega + \mathcal{L}_\xi \omega + (1-t) [D] \text{ for a conical K\"ahler metric } \omega  \}$ and $(\omega_t, \xi_t) $ be the minimal K\"ahler-Ricci soliton metric solving $$Ric(\omega_t) = t\omega_t + \mathcal{L}_{\xi_t} \omega_t + (1-t) [D_t]$$ for $t\in [R(X), T)$. The effective divisor $D_t$ depends on $t$.

\medskip

\item[(2.2)] Then $(X_{KE}, D_t, \omega_t, \xi_t)$ converges in Gromov-Hausdorff sense to a compact metric space $(X', D', \omega', \xi')$ homeomorphic to a Fano variety $X'$ with log terminal singularities.  $D'$ is an effective $\mathbb{R}$-divisor in $-K_{X'}$ and $\omega'$ is a conical K\"ahler-Ricci soliton metric solving
$$Ric(\omega')=T \omega' + \mathcal{L}_{\xi'} \omega' + (1-T)[D'],$$
in particular, $\omega'$ is smooth outside $D'$ and the singularities of $X'$.

\medskip

\item[(2.3)] We replace $(X_{KE}, D_{KE})$ by $(X', D')$ and go back to $(2)$ and continue the procedure.

\medskip

\item[(2.4)] The regression terminates in finite steps for a finitely many data $\{ (X_j, D_j), T_j\}$ for $j=I, ..., J$ with $(X_I, D_I)= (X_{KE}, D_{KE} )$.

\medskip

\item[(2.5)]  $T_J= R_{BE}(X, D_I)\leq R_{BE}(X)=1$. For generic $D_0$, $D_J=0$  and $(X_{KR}, \omega_{KR})= (X_J, \omega_1)$ is a  $\mathbb{Q}$-Fano K\"ahler-Ricci soliton space solving $Ric(\omega_{KR}) = \omega_{KR} + \mathcal{L}_{\xi_{KR}}\omega_{KR}$.

\end{enumerate}

\medskip

\item[{\bf (3)}] We call $(X_{KE}, \omega_{KE}) $ when $R(X)=1$ and $(X_{KR}, \omega_{KR})$ when $R(X)<1$ the maximal model of $X$ in the sense that it maximizes  Bakry-Emery-Ricci curvature.  It should be unique and does not depend on the choice of the initial divisor $D$, while the intermediate model $(X_{KE}, D_{KE}) $ is not necessarily unique and it might depend on $D$,  if $R(X)<1$.

\end{enumerate}

\bigskip

If the above speculation holds, we also conjecture that the maximal model of a Fano manifold $X$ should coincide with the limit of the Fano K\"ahler-Ricci flow on $X$, generalizing \cite{PSSW}. The above scheme can be generalized to any log Fano variety $\{ X, D \}$ with some modifications if the polarization $-K_X-D$ satisfies log $K$-stability for the log pair $(X, -(1-\epsilon)K_X+\epsilon D)$ with sufficiently small $\epsilon>0$. This will give a unique maximal model for any log Fano variety if $\mathcal{R}_{BE}(X, -K_X-D)$  is achieved.  In this case, the divisor where the conical singularities occur changes continuously  $t$ increases so that the deforming soliton metrics are always minimizing the Futaki invariant for each $t$. The minimal model program deforms projective varieties with positive Kodaira dimension to their minimal model by birational tranformations, while in the Fano case, our regression scheme looks for a model of Fano manifold by deformations of complex structures which should be also identified as certain algebraic operations in relation to log K-stability.  A good model to test the above scheme is the example in \cite{Sze2}.

\bigskip

\noindent{\bf Acknowledgements} We would like to thank D.H. Phong, Jacob Sturm, Gabor Szekelyhidi and Daguang Chen for many stimulating discussions. The paper is also part of the theses of the first and second named authors at Rutgers University, and they would like to thank the generous support from the math department of Rutgers University.

\bigskip

\end{document}